\theoremstyle{plain}
\newtheorem{theorem}                 {Theorem}      [section]
\newtheorem{proposition}  [theorem]  {Proposition}
\newtheorem{lemma}        [theorem]  {Lemma}
\theoremstyle{definition}
\newtheorem{example}      [theorem]  {Example}
\newtheorem{remark}       [theorem]  {Remark}
\newtheorem{definition}   [theorem]  {Definition}
\newcommand{\be}[1]{\begin{equation}\label{#1}}
   \newcommand{\ee}{\end{equation}}
\numberwithin{equation}{section}
\DeclareMathOperator{\ad}{ad}
\DeclareMathOperator{\Ima}{Im} %Image
\DeclareMathOperator{\I}{Im} %Imaginary part
\DeclareMathOperator{\spa}{span}
\DeclareMathOperator{\End}{End}
\DeclareMathOperator{\Hom}{Hom}
\DeclareMathOperator{\rank}{rank}
\newcommand{\wt}{\widetilde}
\newcommand{\wh}{\widehat}
\newcommand{\cc}{\mathbb C}  %Complexification superscript
\newcommand{\pa}{\partial}
\def \nn{\mathbb N}
\def \zn{\mathbb Z}
\def \rn{\mathbb R}
\def \cn{\mathbb C}
\def \T{\mathcal T}
\def \W{\mathcal W}
\def \H{\mathbb H}
\def \O{\mathbb O}
\def \l{\ell}
\def \A{A^{\varphi}_z}
\def \HH{\mathcal H}
\def \g{\mathfrak{g}}
\def \h{\mathfrak{h}}
\def \k{\mathfrak{k}}
\def \p{\mathfrak{p}}
\def \q{\mathfrak{q}}
\def \t{\mathfrak{t}}
\def \SO#1{\mathrm{SO}(#1)}
\def \so#1{\mathfrak{so}(#1)}
\def \U#1{{\mathrm U}(#1)}
\def \u#1{\mathfrak{u}(#1)}
\def \su#1{\mathfrak{su}(#1)}
\def \Sp#1{\mathrm{Sp}(#1)}
\def \Orthog#1{{\mathrm O}(#1)}
\def \CP#1{\mathbb{C}P^{#1}}
\def \RP#1{\mathbb{R}P^{#1}}
\def \HP#1{\mathbb{H}P^{#1}}
\def \d{\mathrm{d}}
\def \pp{\mathfrak{p}}
\newcommand{\zbar}{\bar{z}}
\newcommand{\CC}{\underline{\mathbb C}}
\newcommand{\ov}{\overline}
\newcommand{\ul}{\underline}
\newcommand{\alg}{{\rm alg}}
\newcommand{\Hh}{\mathcal{H}} %horizontal bundle
\newcommand{\Vv}{\mathcal{V}} %vertical bundle
\newcommand{\eu}{\mathrm{e}} %Euler's number
\newcommand{\ii}{\mathrm{i}} %sqrt{-1}
\begin{document}
\baselineskip 18pt \larger[1]
\begin{footnotesize}
\begin{flushright}
CP3-ORIGINS-2013-11 DNRF90 \& DIAS-2013-11; J.\ London Math.\ Soc (to appear). 
\end{flushright}
\end{footnotesize}
\bigskip

\title{Harmonic maps into the exceptional symmetric space $G_2/SO(4)$}

\author{Martin Svensson} 
\author{John C. Wood}

\keywords{harmonic map, twistor, exceptional Lie group, non-linear sigma model}

\subjclass[2000]{53C43, 58E20}

\thanks{The first author was supported by the Danish Council for Independent Research under the project \emph{Symmetry Techniques in Differential Geometry}.
The second author thanks the Department of Mathematics and Computer Science of the University of Southern Denmark, Odense, for support during the preparation of this work.}

\address
{Department of Mathematics \& Computer Science, University of
Southern Denmark,
Campusvej 55, DK-5230 Odense M, Denmark}
\email{svensson@imada.sdu.dk}

\address
{Department of Pure Mathematics, University of Leeds, Leeds LS2 9JT, Great Britain}
\email{j.c.wood@leeds.ac.uk}

\begin{abstract}  
We show that a harmonic map from a Riemann surface into the exceptional
symmetric space $G_2/{\mathrm SO}(4)$ has a $J_2$-holomorphic twistor
lift into one of the three
flag manifolds of $G_2$ if and only if it is `nilconformal',
i.e., has nilpotent derivative.
  Then we find relationships with almost complex maps from a surface into the
6-sphere; this enables us to construct examples of nilconformal harmonic maps into $G_2/{\mathrm SO}(4)$ which are not of finite uniton number, and which have lifts into any of the three twistor spaces.  
Harmonic maps of finite uniton number are all nilconformal; for such maps, we show that our lifts can be constructed explicitly from extended solutions.
\end{abstract}

\maketitle

\section{Introduction}

\emph{Harmonic maps} are smooth maps between Riemannian manifolds which extremize the Dirichlet energy integral (see, for example, \cite{eells-lemaire}).
Harmonic maps from surfaces into symmetric spaces are of particular interest both to geometers, as they include \emph{minimal surfaces}, and to theoretical physicists, as they constitute the \emph{non-linear $\sigma$-model of particle physics}. 
Twistor methods for finding such harmonic maps have been around for some time: a general theory was given by F.~E.\ Burstall and J.~H.\ Rawnsley  \cite{burstall-rawnsley}.  The idea is to find a \emph{twistor fibration (for harmonic maps)}: this is a fibration $Z \to N$ from an almost complex manifold $Z$, called a \emph{twistor space}, to a Riemannian manifold $N$, with the property that
(almost-)holomorphic maps from Riemann surfaces to $Z$ project to harmonic maps into $N$.
For a symmetric space $N =G/H$, the theory of \cite{burstall-rawnsley} provides twistor spaces which are flag manifolds of $G$ precisely when $N$ is \emph{inner}, i.e., has inner Cartan involution; those twistor spaces come equipped with a canonical non-integrable complex structure $J_2$ and canonical fibration to $N$. 

We shall concentrate on the exceptional symmetric space $G_2/\SO{4}$; this has precisely three
twistor spaces $T_s$ \ $(s=1,2,3)$ which are flag manifolds of $G_2$,
see \S \ref{subsec:twistor-sp1}.  

A smooth map from a Riemann surface to a symmetric space is called \emph{nilconformal}
if its derivative is nilpotent, see Definition \ref{def:nilconformal} below;
here, by \emph{Riemann surface} we mean a connected (not necessarily compact) one-dimensional complex manifold.
Our main result is as follows:

\begin{theorem}\label{th:main}
A harmonic map from a Riemann surface to $G_2/\SO{4}$ has a twistor lift into one of the three twistor spaces $T_s$ if and only if it is nilconformal.
\end{theorem}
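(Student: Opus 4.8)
To prove Theorem~\ref{th:main} the plan is to argue within the standard moving‑frame description of harmonic maps into the inner symmetric space $G_2/\SO4$. Write $\g=\mathfrak g_2$, let $\g^\cn$ be its complexification and $\g^\cn=\h^\cn\oplus\mathfrak m^\cn$ the decomposition determined by the (inner) involution $\sigma$ with $\h=\so4$. A harmonic map $\varphi$ is then encoded by its $(1,0)$‑derivative $\A$, a section of $\underline{\mathfrak m^\cn}\otimes(T^{1,0}M)^*$, together with the pulled‑back canonical connection $D=D'+D''$ on the trivial bundle $\underline{\g^\cn}$; harmonicity of $\varphi$ is precisely the statement that $\A$ is holomorphic with respect to $D''$, while reality gives that the $(0,1)$‑derivative is the conjugate $\ov{\A}$. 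Each twistor space $T_s$ $(s=1,2,3)$ is a flag manifold $G_2/K_s$ attached to a canonical element $\xi_s\in\so4$ whose $\ad$‑eigenspaces grade $\g^\cn=\bigoplus_j\g_j^{(s)}$ compatibly with $\sigma$, with $\mathfrak k_s^\cn=\g_0^{(s)}$ and associated parabolic $\mathfrak p_s=\bigoplus_{j\ge0}\g_j^{(s)}$. Pulling the tautological parabolic subalgebra bundle back along a map $\psi:M\to T_s$ covering $\varphi$ produces a $\sigma$‑adapted filtration of $\underline{\g^\cn}$ of ``type $\xi_s$'', and the first step is to record the dictionary: such a $\psi$ is a $J_2$‑holomorphic lift of $\varphi$ exactly when this filtration is $D''$‑holomorphic and is carried one step up by $\ad\A$ (and correspondingly one step down by $\ad\ov{\A}$) --- these being the holomorphicity, horizontality and ``$J_2$‑twist'' conditions rolled together. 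This turns the theorem into the problem of manufacturing such a filtration from $\varphi$ alone.

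\emph{Twistor lift $\Rightarrow$ nilconformal.} If $\psi:M\to T_s$ is such a lift then, by the shift property, $\A$ takes values pointwise in $\mathfrak m^\cn\cap\bigl(\bigoplus_{j\ge1}\g_j^{(s)}\bigr)$, i.e.\ in the nilradical of $\mathfrak p_s$; since every element of the nilradical of a parabolic is $\ad$‑nilpotent, $\ad\A$ is a nilpotent endomorphism at each point, so $\varphi$ is nilconformal in the sense of Definition~\ref{def:nilconformal}. (Should that definition also require $\A$ and its $D'$‑derivatives to span a nilpotent subalgebra, the same nilradical argument still applies, since the nilradical is a nilpotent Lie subalgebra preserved, up to shifts, by $D$ and by $\ad\A$.) This direction is short.

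\emph{Nilconformal $\Rightarrow$ twistor lift.} This is where the work is. Assume $\ad\A$ is nilpotent, so each value $\A(p)$ is a nilpotent element of $\g^\cn$ lying in $\mathfrak m^\cn$; by real‑analyticity of harmonic maps the $G_2$‑nilpotent‑orbit type of $\A(p)$ is constant off a discrete set $M_0\subset M$. On $M\setminus M_0$ I would attach to $\A$ its Jacobson--Morozov grading: complete $\A$ to an $\mathfrak{sl}_2$‑triple $(\A,h,f)$ chosen compatibly with $\sigma$ --- possible because $\A\in\mathfrak m^\cn$, so one may take $h\in\h^\cn$ and $f\in\mathfrak m^\cn$ --- and let $\underline V_\bullet$ be the filtration of $\underline{\g^\cn}$ by $\ad(h)$‑weights, reindexed so that $\ad\A$ raises the index by one. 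Since $\A$ is $D''$‑holomorphic the $\underline V_j$ are $D''$‑holomorphic subbundles over $M\setminus M_0$, and by the standard completion of a holomorphic subbundle across isolated points (Koszul--Malgrange) they extend over all of $M$; $\sigma$‑compatibility of the triple makes the extension $\sigma$‑adapted, so it defines a map $\psi:M\to G_2/K$ for the corresponding $K\subseteq\SO4$, whence $\pi\circ\psi=\varphi$. The shift property is built into the Jacobson--Morozov grading, and the conjugate shift condition on $\ov{\A}$ holds by reality, so $\psi$ is a $J_2$‑holomorphic lift. It remains to check, by running through the short list of nonzero nilpotent orbits of $\mathfrak g_2^\cn$ meeting $\mathfrak m^\cn$, that the grading obtained agrees, after conjugation and possibly after coarsening or refining to one of the admissible gradings, with one of $\xi_1,\xi_2,\xi_3$; then $G_2/K$ is one of the $T_s$.

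The main obstacle I anticipate is, in the converse direction, simultaneously (i) completing the algebraically defined filtration to a genuine smooth, $\sigma$‑adapted subbundle across $M_0$ \emph{while retaining the shift property}, so that the limiting object still lifts $\varphi$ rather than some degeneration of it, and (ii) the bookkeeping that matches the resulting grading with one of the three canonical elements $\xi_s$ --- the case analysis over the nilpotent orbits of $\mathfrak g_2^\cn$ is finite but needs care, in particular in pinning down which orbits are actually realised by the derivative of a harmonic map. As a consistency check, when $\varphi$ has finite uniton number one knows from Correia--Pacheco that $\A$ is automatically nilpotent, and the filtration above should coincide with the flag of unitons of an extended solution, which is what will make the lifts explicit, as announced in the introduction.
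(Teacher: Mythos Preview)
Your easy direction (twistor lift $\Rightarrow$ nilconformal) is correct and matches the paper's Proposition~\ref{pr:real-Grass}: once $\A$ sits in the nilradical of a parabolic, it is nilpotent on $\cn^7$, hence $\varphi$ is nilconformal in the sense of Definition~\ref{def:nilconformal}.

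For the converse you take a genuinely different route from the paper. The paper never invokes Jacobson--Morozov or nilpotent-orbit theory; it works entirely in the $7$-dimensional standard representation and the octonionic vector product. It defines the invariant $s(\varphi)\in\{1,2,3\}$ as the least $s$ with $(\A)^{2s}\bigl((-1)^{s-1}\varphi\bigr)=0$, and for each value of $s$ writes down an explicit subbundle ($W$, $\ell$, or $(\ell,D)$) of $\CC^7$ by formulae such as $W=\beta^a\cap\varphi^\perp$ with $\beta=(\A)^{s}\bigl((-1)^{s-1}\varphi\bigr)$; see Lemmas~\ref{le:s1}--\ref{le:s3}. The $G_2$-structure enters through annihilators and complex-coassociativity (Lemmas~\ref{le:expl-W} and~\ref{le:coassoc-neg}), and the hardest point---securing condition~\eqref{W-cond} when $s=3$---is handled by direct octonion calculations, not by an $\mathfrak{sl}_2$-triple. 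What this buys the paper is explicit formulae that feed directly into \S\ref{sec:almost-complex} and \S\ref{sec:finite-uniton}. What your approach would buy, if it went through, is a uniform Lie-theoretic argument that might generalise beyond $G_2/\SO4$.

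There is, however, a real gap in your converse. You assert that the Jacobson--Morozov grading attached to $\A$ ``agrees, after conjugation and possibly after coarsening or refining'', with one of the three canonical elements $\xi_s$, but you do not carry this out. For the symmetric pair $(\g_2,\so4)$ there are several $K^\cn$-orbits of nilpotents in $\mathfrak m^\cn$, and for each one you must check that the normal $\mathfrak{sl}_2$-triple has semisimple element $h$ which is (conjugate to) one of the $\xi_s$, so that the centraliser of $h$ in $\so4$ is one of $\u2_+$, $\u2_-$, $\u1\times\u1$. If $h$ is \emph{not} one of these and you ``coarsen'' the grading, you lose the one-step shift property that makes $\psi$ $J_2$-holomorphic; if you ``refine'', you need a further holomorphic splitting that is not supplied by $\A$ alone. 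The paper sidesteps this entirely by working with $s(\varphi)$ rather than the orbit type, and by constructing the flag in $\cn^7$ directly. A second, smaller point: your claim that the weight filtration is $D''$-holomorphic is correct (it is built from kernels and images of powers of $\A$), but you should say so rather than leave it as ``since $\A$ is $D''$-holomorphic''---the semisimple element $h$ itself need not vary holomorphically, only the filtration does.
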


This was previously known only for harmonic maps from the $2$-sphere \cite{burstall-rawnsley}; these are all of finite uniton number \cite{uhlenbeck}. 
  The class of nilconformal harmonic maps includes those of finite uniton number, but is strictly bigger, see Example \ref{ex:torus};
it also includes \emph{inclusive maps}, see \S \ref{subsec:inclusive}, these have lifts into $T_1$. 

The method is as follows.
We embed $G_2/\SO{4}$ in $G_2$ and thence in $\U{7}$.
 Then, as in Uhlenbeck, the derivative of a harmonic map $\varphi:M \to G_2/\SO{4}
\subset \U{7}$
defines (on each coordinate domain) a holomorphic endomorphism $\A$ of a trivial
complex bundle $M \times \cn^7$ endowed with a holomorphic structure defined by $\varphi$.
Nilconformality means that this endomorphism is nilpotent; our twistor lifts are constructed from it, see Theorem \ref{th:lifts}.

In \cite{J2}, the authors showed that a harmonic map from a Riemann surface to a compact classical inner symmetric space has a twistor lift if and only if it is nilconformal --- previously known for maps into
a complex Grassmannian \cite{burstall-grass}.
The symmetric space $G_2/\SO{4}$ is the space of all
associative $3$-dimensional subspaces of $\rn^7$, so that it can be embedded in the real Grassmannian $G_3(\rn^7)$.
By realizing the twistor spaces $T_s$ as flags over this Grassmannian which
satisfy a $G_2$-condition, we interpret
our work in terms of the constructions in \cite{J2}, see
\S \ref{subsec:twistor-excep}.

In \S \ref{sec:almost-complex}, we show how to construct harmonic maps into
 $G_2/\SO{4}$ from almost complex maps into the $6$-sphere, giving harmonic maps with lifts into any of the three twistor spaces.

In \S \ref{sec:finite-uniton}, for harmonic maps of finite uniton number, we show that our twistor lifts are given by the \emph{canonical lift} of the authors \cite{J2}, see Theorem \ref{th:fi-uniton};
 in the case of maps with $S^1$-invariant extended solutions, the canonical lifts are \emph{superhorizontal},
and our formulae agree with those of \cite{correia-pacheco-G2}.  We end that section with two explicit examples of harmonic maps of finite uniton number which do not have $S^1$-invariant extended solutions.
 
In the last section, we will justify some of our assertions with the Lie theory in \cite{burstall-rawnsley}. 

The authors thank Fran Burstall, Luis Fern\'andez and Rui Pacheco for some
illuminating conversations, and the referee for three pertinent comments.

\section{Preliminaries} \label{sec:prelims}

\subsection{The Octonions and the action of $G_2$} \label{subsec:octonians}

Let $\H \cong \rn^4$ denote the quaternions and $\I\H \cong \rn^3$ 
the imaginary quaternions, both with their canonical orientations.
The algebra of \emph{octonions} $\O$, or \emph{Cayley numbers}, 
\cite[p.~113{\it ff}]{harvey-lawson} is an alternative real division algebra given by $\O=\H\oplus\H\cdot e$ for some unit octonion $e$.
Both $\O$ and its imaginary part $\I\O = \I\H \oplus \H \cdot e$ acquire orientations from those of $\H$ and $\I\H$.

Octonion multiplication $\cdot$ on $\O \cong \rn^8$ induces a vector product
 $\times$ on $\I\O \cong \rn^7$ by
$v \times w =$ the imaginary part of $v \cdot w$.  The \emph{real part}
 of $v \cdot w$ gives a positive definite inner product $( \cdot, \cdot)$
on $\I\O$; we extend both of these by complex bilinearity to
$\I\O \otimes \cn \cong \cn^7$.  The \emph{Hermitian inner product of}
$v,w \in \I\O \otimes \cn$ is given by $(v,\ov w)$: here, if $w = a + b\ii$ \ $(a,b  \in \I\O)$, then $\ov w$ denotes its complex conjugate $a - b\ii$.
If $(v,\ov w) = 0$, we call $v$ and $w$ \emph{orthogonal}, written $v \perp w$. 

The vector product is bilinear and enjoys the following properties for $u,v,w\in\I\O \otimes \cn$ \cite{harvey-lawson}:
\begin{eqnarray} 
(u,v \times w) &=& (u \times v, w), \label{vp-scalar}\\
u\times(v\times w)+(u\times v)\times w &=& 2(u,w)v- (u,v)w-
(v,w)u\,.  \label{vp-assoc}
\end{eqnarray}

A $3$-dimensional subspace $\xi\subset\I\O$ is said to be \emph{associative}
if it is the imaginary part of a subalgebra isomorphic to the quaternions.
It acquires a canonical orientation from that of the quaternions. In the sequel,
we write $\ov L_j$ to mean $\ov{L_j}$, etc. The following is left to the reader.

\begin{lemma} \label{le:assoc}
Let  $\xi$ be a $3$-dimensional subspace of\/ $\I\O$.

{\rm (a)} The following are equivalent$:$
\begin{enumerate}
\item[(i)] $\xi$ is associative\/$;$ 

\item[(ii)] $\xi$ is closed under the vector product\/$;$

\item[(iii)] $\xi$ has an orthonormal basis $\{e_1,e_2,e_3\}$ with
$e_1 \times e_2 = e_3\,;$

\item[(iv)] $\xi \otimes \cn$ has a basis $\{\ov L_1, L_0, L_1\}$ with
 $L_0 = \ov L_0$ and $L_1 \times \ov L_1 = \ii L_0\,;$

\item[(v)] $\xi \otimes \cn$ has a basis $\{\ov L_1, L_0, L_1\}$ with
$L_0 = \ov L_0$ and $L_0 \times L_1 = \ii L_1$\,.
\end{enumerate}

{\rm (b)} Let $\xi$ be associative.  Then {\rm (i)} its canonical orientation is that given by $\{e_1,e_2,e_1 \times e_2\}$ for any linearly independent elements $e_1,e_2$ of\/ $\xi;$
{\rm (ii)} for \emph{any} oriented orthonormal basis
$\{e_1,e_2,e_3\}$ we have $e_1 \times e_2 = e_3\,;$
{\rm (iii)} for any non-zero isotropic vector $L_1 \in \xi \otimes \cn$ of unit norm,
$\xi \otimes \cn$ has a basis $\{\ov L_1, L_0, L_1\}$ satisfying
{\rm (a)(iv)} and {\rm (a)(v)}.
\qed \end{lemma}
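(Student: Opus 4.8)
The plan is to prove the equivalences in part (a) by the chain (iii) $\Rightarrow$ (ii) $\Rightarrow$ (i) $\Rightarrow$ (iii) together with the complexified reformulations (iv) and (v), and then deduce part (b) as a by-product of the constructions used. First I would handle the real statements. The implication (i) $\Rightarrow$ (iii) is essentially the definition: if $\xi = \I A$ for a subalgebra $A \cong \H$, then the canonical orthonormal basis $\{\ii,\jj,\mathrm k\}$ of $\I\H$ transports to an orthonormal basis $\{e_1,e_2,e_3\}$ of $\xi$ with $e_1 e_2 = e_3$ in $A$, hence $e_1 \times e_2 = \I(e_1 \cdot e_2) = e_3$ since $e_3$ is already imaginary. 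For (iii) $\Rightarrow$ (ii), given such a basis one computes all products $e_i \times e_j$ using the vector-product identities \eqref{vp-scalar} and \eqref{vp-assoc}: bilinearity and the identity \eqref{vp-assoc} with $u=v=e_i$ force $e_i \times e_i = 0$ (the left side vanishes by anticommutativity of $\times$, which itself follows from \eqref{vp-assoc} with $u=w$), while $e_3 \times e_1$ and $e_2 \times e_3$ are obtained by feeding $e_1 \times e_2 = e_3$ back into \eqref{vp-assoc}; all resulting vectors lie in $\spa\{e_1,e_2,e_3\} = \xi$, so $\xi$ is closed under $\times$. For (ii) $\Rightarrow$ (i), if $\xi$ is closed under $\times$ then $\rn\cdot 1 \oplus \xi \subset \O$ is closed under octonion multiplication (the product of two imaginary octonions has real part an inner product and imaginary part the vector product, both of which stay in $\rn 1 \oplus \xi$); being a $4$-dimensional unital subalgebra of the alternative division algebra $\O$, it is associative and isomorphic to $\H$ by the standard classification, so $\xi$ is associative.

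Next I would pass to the complex statements (iv) and (v). From an orthonormal basis $\{e_1,e_2,e_3\}$ with $e_1\times e_2 = e_3$ as in (iii), set $L_1 = (e_1 + \ii e_2)/\sqrt2$, $L_0 = e_3$; then $L_1$ is isotropic of unit Hermitian norm, $L_0 = \ov L_0$, and a direct expansion using bilinearity of $\times$, together with $e_1\times e_1 = e_2 \times e_2 = 0$ and $e_2 \times e_1 = -e_3$, gives $L_1 \times \ov L_1 = \tfrac12(e_1+\ii e_2)\times(e_1 - \ii e_2) = \ii\, e_2\times e_1 \cdot(-1) = \ii e_3 = \ii L_0$, establishing (iii) $\Rightarrow$ (iv). Similarly $L_0 \times L_1 = e_3 \times (e_1 + \ii e_2)/\sqrt2 = (e_2 - \ii e_1)/\sqrt2 = \ii L_1$ gives (iv) $\Rightarrow$ (v) (or one derives (v) from (iv) abstractly via \eqref{vp-assoc}). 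Conversely, from a basis as in (iv) or (v) one recovers a real orthonormal basis by reversing this change of coordinates — writing $e_1 = (L_1 + \ov L_1)/\sqrt2$, $e_2 = (L_1 - \ov L_1)/(\ii\sqrt2)$, $e_3 = L_0$, checking these are real, orthonormal (using isotropy and unit norm of $L_1$), and that $e_1 \times e_2 = e_3$ follows from the stated relation — which closes the loop back to (iii). The routine part here is the bookkeeping with real and imaginary parts and normalizations; I would state the change of basis once and let the reader verify the identities.

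For part (b): statement (i) is immediate once we know (from (iii) and its proof) that the orientation of $\xi$ as $\I\H$ is represented by $\{e_1, e_2, e_1\times e_2\}$ for the particular basis arising from an algebra isomorphism; to get it for *arbitrary* independent $e_1, e_2$, note that any such pair is carried to the standard one by an element of $GL(\xi)$, and since $e_1 \mapsto e_1\times e_2$ depends continuously and the map $(e_1,e_2)\mapsto \det(e_1, e_2, e_1\times e_2)$ is a nonzero polynomial (nonzero because it is nonzero on an orthonormal pair), its sign is constant on the connected set of linearly independent pairs inducing a given orientation. Statement (ii) then follows by applying (i) to an oriented orthonormal basis $\{e_1,e_2,e_3\}$: $e_1 \times e_2$ is a unit vector orthogonal to $e_1,e_2$ (by \eqref{vp-scalar}, since $(e_i, e_1\times e_2) = (e_i\times e_1, e_2)$ and one checks this vanishes) and $\{e_1,e_2,e_1\times e_2\}$ is positively oriented, so it must equal $e_3$. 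Statement (iii) is exactly the content of (iii) $\Rightarrow$ (iv) and (iii) $\Rightarrow$ (v) applied after normalizing $L_1$, noting any nonzero isotropic vector of unit norm can be completed as above.

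The main obstacle is the orientation argument in part (b)(i): matching the *canonical* orientation (defined abstractly via the isomorphism with $\H$, hence a priori basis-dependent) with the concrete formula $\{e_1, e_2, e_1\times e_2\}$ for all independent pairs requires a connectedness/continuity argument rather than a direct computation, and one must be careful that it does not secretly depend on the choice of algebra isomorphism $\xi \hookrightarrow \O$. Everything else is a finite sequence of applications of the two vector-product identities \eqref{vp-scalar} and \eqref{vp-assoc}.
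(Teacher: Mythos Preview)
Your overall strategy matches the paper's: the paper also proves (i) $\Leftrightarrow$ (iii) via the identity \eqref{vp-assoc} and then links (iii), (iv), (v) by an explicit change of basis, dismissing part (b) with ``proved similarly.'' Your cycle (iii) $\Rightarrow$ (ii) $\Rightarrow$ (i) is slightly more explicit than the paper's --- in particular you supply the step (ii) $\Rightarrow$ (i) via the Hurwitz classification, which the paper glosses over --- but the content is the same.

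There is, however, a sign slip in your complex substitution. With $L_1 = (e_1 + \ii e_2)/\sqrt{2}$ and $L_0 = e_3$ one gets
\[
L_1 \times \ov L_1 = \tfrac12(e_1+\ii e_2)\times(e_1-\ii e_2)
= \tfrac12\bigl(-\ii\, e_1\times e_2 + \ii\, e_2\times e_1\bigr)
= -\ii e_3 = -\ii L_0,
\]
and likewise $L_0 \times L_1 = (e_2 - \ii e_1)/\sqrt2 = -\ii L_1$, both with the wrong sign. The paper instead takes $L_0 = e_1$, $L_1 = (e_2 - \ii e_3)/\sqrt2$, which gives the stated relations; alternatively you can keep $L_0 = e_3$ but set $L_1 = (e_1 - \ii e_2)/\sqrt2$. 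This is a bookkeeping error, not a gap in the argument, but it propagates into your verification of (v) and into (b)(iii), so it is worth fixing once and checking the downstream computations.
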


The group $G_2$ is the automorphism group of $\O$; it stabilizes $1$, and
since the scalar product is the real part of the octonian multiplication
$\cdot$\,, it acts by isometries on $\I\O$.  Since it also preserves the vector product, it preserves orientation so that $G_2\subset\SO{\I\O}\cong\SO{7}$. The induced action of $G_2$ on the Grassmannian of associative $3$-dimensional subspaces of $\I\O \cong \rn^7$ is transitive \cite[p.~114]{harvey-lawson}, and the stabilizer of $\xi=\I\H$ is $\SO{4}=\Sp{1}\times_{\zn_2}\Sp{1}$ so that $G_2/\SO{4}$ is the set of all associative $3$-dimensional subspaces of
$\I\O \cong \rn^7$.  Since such a subspace has a canonical orientation
(see Lemma \ref{le:assoc}), there is an embedding of $G_2/\SO{4}$ in the Grassmannian $\wt G_3(\rn^7)$ of \emph{oriented} $3$-dimensional subspaces of
$\I\O \cong\rn^7$.  

Let $J$ be the orthogonal complex structure
on $\xi^\perp$ with $(1,0)$-space $W$ and choose an orthonormal basis
$\{e_1,\dots,e_4\}$ of $\xi^\perp$ with $Je_1=e_2$ and $Je_3=e_4$.
We say that $J$, or the corresponding $W$, is \emph{positive} (resp.\
\emph{negative}) according as the basis is positive (resp.~negative). 

On the other hand, following \cite{correia-pacheco-G2}, we call a complex $2$-dimensional subspace $W$ \emph{complex-coassociative} if
$v \times w = 0$ for all $v,w \in W$; such a subspace is automatically isotropic, see below. These notions are linked as follows.

\begin{lemma}\label{le:coassoc-neg} Let $\xi$ be an associative $3$-dimensional
 subspace\/ $\I\O$ and let  $W$ be a maximally isotropic
subspace of\/ $\xi^{\perp} \!\otimes \cn$.  Then $W$ is positive
if and only if it is complex-coassociative.
\end{lemma}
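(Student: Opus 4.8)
The plan is to work with the explicit $\SO{4}$-module structure $\xi^\perp \cong \H$ given by \eqref{action}, where $g = [q_1,q_2]$ acts on $b \in \H$ by $b \mapsto q_2 \cdot b \cdot \ov q_1$, and to compare the two $\SO{4}$-invariant notions ``positive'' and ``complex-coassociative'' by a representation-theoretic argument. Both conditions pick out certain maximally isotropic subspaces $W \subset \xi^\perp \otimes \cn$, and the set of maximally isotropic subspaces of a $4$-dimensional complex quadratic space has exactly two connected components (the two rulings of the quadric), which are interchanged by an orientation-reversing isometry of $\xi^\perp$ and permuted transitively by $\SO{4}$ within each component. So it suffices to (i) check that ``complex-coassociative'' is a non-empty $\SO{4}$-invariant condition lying entirely in one ruling, and (ii) verify the claimed correspondence on a single well-chosen example.

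First I would record the elementary fact, promised in the text, that a complex-coassociative $W$ is isotropic: for $v,w \in W$ with $v \times w = 0$ we have, by \eqref{vp-assoc} with $u = v$, that $2(v,w)v - (v,v)w - (v,w)v = v \times (v \times w) + (v\times v)\times w = 0$, and testing this against a vector of $W$ transverse to $v$ forces $(v,v) = (v,w) = 0$; hence $W$ is totally isotropic, and being $2$-dimensional in the $4$-dimensional space $\xi^\perp \otimes \cn$ it is maximally isotropic. Next I would fix the standard model: take $\xi = \I\H$, so $\xi^\perp = \H \cdot e$, pick the octonion multiplication rules from \cite{harvey-lawson}, and compute $v \times w$ for $v = (0,b)$, $w = (0,c)$ explicitly in terms of quaternion multiplication — one gets $v \times w = (\ov c \cdot b - \ov b \cdot c,\, 0) \in \I\H$ up to a fixed sign convention, so that $v\times w = 0 \iff \ov b\cdot c \in \rn$, i.e. $b$ and $c$ are complex-proportional as elements of $\H \otimes \cn \cong \cn^2$ under right multiplication by quaternions. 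From this one reads off the complex-coassociative subspaces as a single $\Sp{1}$-orbit (acting by $q_2$ on the left), determines the induced orientation on $\xi^\perp$ via \eqref{action}, and checks against the sign in the definition of ``positive'' given just before Lemma~\ref{le:coassoc-neg} (basis $\{e_1,\dots,e_4\}$ with $Je_1 = e_2$, $Je_3 = e_4$ positive).

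The key steps in order: (1) prove isotropy of complex-coassociative subspaces from \eqref{vp-assoc}; (2) identify, in the standard model $\xi = \I\H$, the explicit formula for $\times$ on $\xi^\perp$ and deduce that the complex-coassociative subspaces form exactly one of the two rulings of the isotropic quadric; (3) observe that $\SO{4}$ acts on the set of maximally isotropic $2$-planes through $\SO{\xi^\perp} \cong \SO{4}$, preserving each ruling and acting transitively on each, and that the two rulings correspond precisely to the two orientation classes of compatible almost Hermitian structures $J$ on $\xi^\perp$; (4) match the signs by evaluating on one concrete isotropic plane, concluding that the ruling consisting of complex-coassociative planes is exactly the positive one. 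The main obstacle I expect is step (4): pinning down the orientation/sign conventions so that ``positive'' (as defined via an oriented orthonormal basis adapted to $J$) lines up with the correct ruling — this requires care because \cite{harvey-lawson} and \cite{salamon-book} use differing conventions (as the text itself flags), so the computation in step (2) must be done with a fixed, internally consistent choice and then cross-checked against the orientation on $\xi^\perp$ coming from the orientation of $\I\O$ used throughout the paper.
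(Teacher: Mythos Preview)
Your strategy is sound and would yield a correct proof, but it takes a genuinely different route from the paper's.

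The paper's argument is a direct four-line computation: given a maximally isotropic $W \subset \xi^{\perp}\!\otimes\cn$, choose an orthonormal basis $\{e_1,\dots,e_4\}$ of $\xi^{\perp}$ so that $W = \spa\{v,w\}$ with $v = e_1 - \ii e_2$, $w = e_3 - \ii e_4$, and compute
\[
(v\times w,\,\ov{v\times w}) \;=\; 4 - 2\,(e_1,[e_2,e_3,e_4]),
\]
where $[\cdot,\cdot,\cdot]$ is the associator. Then one invokes \cite[Theorem 1.16]{harvey-lawson}: the basis $\{e_1,\dots,e_4\}$ of $\xi^{\perp}$ is positive if and only if $(e_1,[e_2,e_3,e_4]) = 2$, which is exactly the condition $v\times w = 0$. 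No representation theory, no reduction to a single case, and the orientation convention is handled by citing the calibrated-geometry result directly.

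Your approach instead exploits $\SO{4}$-invariance: both ``positive'' and ``complex-coassociative'' are $\SO{4}$-invariant conditions on the two rulings of the isotropic quadric in $\xi^{\perp}\!\otimes\cn$, so it suffices to identify each with one ruling and match on a single example. This is perfectly valid and conceptually pleasant, but it is longer and --- as you yourself flag --- the final sign-match in step~(4) is delicate. Two minor points: your step~(1) establishes the side remark in the text (that complex-coassociative $\Rightarrow$ isotropic) rather than anything needed for the lemma as stated, since $W$ is already assumed maximally isotropic; and your interpretation in step~(2) of the condition $\ov b\,c \in \cn$ as ``$b,c$ complex-proportional in $\H\otimes\cn \cong \cn^2$'' is not right as written ($\H\otimes_{\rn}\cn$ is $\cn^4$, not $\cn^2$) and would need to be reformulated, though the underlying computation of $v\times w$ on $\xi^{\perp}$ is correct. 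The paper's route bypasses all of this by tying the norm of $v\times w$ directly to the coassociative calibration.
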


\begin{proof} There is an orthornormal basis $\{e_i\}$ of $\xi^{\perp}$
with $W$ spanned by
 $v=e_1-\ii e_2$ and $w=e_3-\ii e_4$; a calculation gives 
$
(v\times w\,,\,\ov{v\times w}) = 4-2(e_1\,,\,[e_2,e_3,e_4]), 
$
where $[\cdot,\cdot,\cdot]$ is the associator defined by
$[u,v,w] = (u\cdot v)\cdot w - u \cdot (v \cdot w) $
\cite[p.~114]{harvey-lawson}. Now, by \cite[Theorem 1.16]{harvey-lawson},
$\{e_1,\dots,e_4\}$ is a positive basis for $\xi^\perp$ if and only if
$(e_1, [e_2,e_3,e_4])=2$;  the result follows. 
\end{proof}

\subsection{The standard representation of $G_2$ and the vector product on
$\rn^7$} \label{subsec:repn} 
We shall describe the representation of $G_2$ on $\rn^7$ in terms of its weight spaces, see \cite[\S 22.3]{fulton-harris}.
The Lie algebra $\g_2$ has simple roots $\alpha_1$ and $\alpha_2$, and the remaining roots are given by 
\begin{equation*}
\alpha_3=\alpha_1+\alpha_2\,, \quad
\alpha_4=2\alpha_1+\alpha_2\,, \quad
\alpha_5=3\alpha_1+\alpha_2\,, \quad
\alpha_6=3\alpha_1+2\alpha_2
\end{equation*}
and the negatives of the $\alpha_j$. The root lattice equals the weight lattice; the maximal root is
$3\alpha_1+2\alpha_2$. The representation $\I\O\otimes_\rn\cn\cong\cn^7$ of $\g_2^{\cc} = \g_2 \otimes \cn$ has weights
$\pm\alpha_1$, $\pm(\alpha_1+\alpha_2)$, $\pm(2\alpha_1+\alpha_2)$ and $0$.
For each weight $\lambda$, the corresponding weight space $\l_{\lambda}$ is
 one-dimensional with $\ov \l_{\lambda}=\l_{-\lambda}$, and is isotropic
for $\lambda\neq0$.  Clearly, we have
\be{weight-mult}
\l_{\lambda}\times \l_{\eta}\subset \l_{\lambda+\eta}\,,
\ee
where we set $\l_{\nu} =0$ if $\nu$ is not a weight. 
(For a precise multiplication table, see Example \ref{ex:torus}.) 

For any subspace
$\beta$ of $\I\O\otimes\cn$, the \emph{annihilator of $\beta$} is the subspace
$\beta^a = \{L \in \I\O\otimes\cn : L \times \beta = 0\}$.  If $\beta$ is isotropic of dimension one, then $\beta^a$ is isotropic of dimension three, and contains
$\beta$. To see this, it suffices to calculate $\beta^a$ when $\beta$ is a
weight space, see \cite{correia-pacheco-G2}; that this suffices follows from the transitivity of $G_2$ on isotropic unit vectors: in fact, by \cite[p.~115]{harvey-lawson},

\begin{lemma}\label{le:G2-trans}
The action of $G_2$ is transitive on \emph{pairs} $(u,v)$ of isotropic, orthogonal unit vectors with
$u\times v=0$. 
\qed
\end{lemma}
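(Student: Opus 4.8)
The plan is to use the quoted transitivity of $G_2$ on isotropic unit vectors to fix $u$, and then to recognise the set of admissible partners $v$ as a round $3$-sphere on which the stabiliser of $u$ acts transitively, exploiting the weight-space description of $\cn^7$ from \S\ref{subsec:repn}. So take $u$ to be a unit multiple of the weight vector $\l_{\alpha_1}$ and put $K=\{g\in G_2:gu=u\}$; it suffices to prove that $K$ is transitive on $V:=\{v\in\I\O\otimes\cn:(v,v)=0,\ (v,\ov v)=1,\ (v,\ov u)=0,\ u\times v=0\}$. Using \eqref{weight-mult} and the list of weights, $\l_{\alpha_1}$, $\l_{2\alpha_1+\alpha_2}$ and $\l_{-(\alpha_1+\alpha_2)}$ all lie in $u^a$ (their vector products with $u$ land in $\l_{2\alpha_1}$, $\l_{3\alpha_1+\alpha_2}$ and $\l_{-\alpha_2}$, none of which is a weight of $\cn^7$), so, $u^a$ being $3$-dimensional, it is exactly their span. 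As $u^a$ is totally isotropic, the conditions $(v,v)=0$ and $u\times v=0$ just say $v\in u^a$; and since among the three summands only $\l_{\alpha_1}$ pairs nontrivially under $(\cdot,\cdot)$ with $\ov u\in\l_{-\alpha_1}$, the further condition $v\perp u$ cuts $u^a$ down to $W:=\l_{2\alpha_1+\alpha_2}\oplus\l_{-(\alpha_1+\alpha_2)}$. Hence $V$ is precisely the unit sphere of the positive-definite Hermitian plane $\bigl(W,(\cdot,\ov\cdot)\bigr)$, a copy of $S^3$.

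Next I would pin down the action of $K$ on $W$. Since elements of $G_2$ commute with $\times$ and preserve the Hermitian inner product, each $g\in K$ preserves both $u^a$ and $u^{\perp}$, hence preserves $W=u^a\cap u^{\perp}$ and therefore also $\ov W$; together with the fixed lines $\cn u,\cn\ov u$ and the leftover line $\l_0$ this yields a $K$-invariant decomposition $\cn^7=\cn u\oplus\cn\ov u\oplus\l_0\oplus W\oplus\ov W$, and restriction gives a homomorphism $\rho\colon K\to\U{W}\cong\U2$. This $\rho$ is injective: if $g\in\ker\rho$ then $g$ is the identity on $W$, hence on $\ov W$, hence on the real $4$-plane $\eta=(W\oplus\ov W)\cap\I\O$; since $g\in\SO7$ this forces $g|_{\eta^{\perp}}\in\SO{\eta^{\perp}}\cong\SO3$, and as $g$ also fixes the real $2$-plane $(\cn u\oplus\cn\ov u)\cap\I\O$ lying inside $\eta^{\perp}$, it must be the identity there as well, so $g=1$.

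To conclude I would observe that the orbit $G_2/K$ is the manifold of unit isotropic vectors $\{a+\ii b:a\perp b,\ |a|=|b|=1/\sqrt2\}$, which is diffeomorphic to the unit tangent bundle of $S^6$; this manifold is $11$-dimensional and simply connected, so $\dim K=3$ and, $G_2$ being simply connected, $K$ is connected. A connected $3$-dimensional subgroup of $\U2$ is the standard $\SU2$, which acts transitively on the unit sphere of $\cn^2$; since $\rho$ is an isomorphism onto its image, $K$ therefore acts transitively on $V$, which is the assertion of the lemma.

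The one step that needs genuine care is the identification of $\rho(K)$ with the full $\SU2$; to avoid any appeal to the classification of compact $3$-dimensional groups one can argue concretely instead. The subtorus $\ker\alpha_1$ of the maximal torus of $G_2$ fixes $\l_{\alpha_1}$ and acts on $W=\l_{2\alpha_1+\alpha_2}\oplus\l_{-(\alpha_1+\alpha_2)}$ with weights $\pm\alpha_2$, i.e.\ as the diagonal torus of $\SU2\subset\U2$; and the compact root vectors for the roots $\pm(3\alpha_1+2\alpha_2)$ also fix $\l_{\alpha_1}$, because $\pm(3\alpha_1+2\alpha_2)+\alpha_1$ are not weights of $\cn^7$, and they act on $W$ as an off-diagonal one-parameter subgroup. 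These one-parameter subgroups lie in $K$ and generate (as $\rho$ is injective) a copy of $\SU2$ inside $K$ acting on $W$ by the standard representation, which is already transitive on $S^3=V$.
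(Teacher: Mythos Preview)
Your argument is correct, but the paper's proof is quite different and considerably shorter. The paper stays in the real picture: writing $u=e_1-\ii e_2$ and $v=e_3-\ii e_4$ with $e_1,\dots,e_4\in\rn^7$ orthonormal, it observes that $u\times v=0$ forces $(e_1\times e_2,e_3)=0$, quotes the classical fact (from \cite{harvey-lawson}) that $G_2$ is transitive on ordered orthonormal triples $(e_1,e_2,e_3)$ satisfying this relation, and then checks via \eqref{vp-assoc} that $e_4=-e_2\times(e_1\times e_3)$ is determined by the triple, so the whole pair $(u,v)$ is determined. Your route---fixing $u$ in a weight space, identifying the admissible partners $v$ as the unit $3$-sphere in $W=\l_{2\alpha_1+\alpha_2}\oplus\l_{-(\alpha_1+\alpha_2)}$, and showing the stabiliser acts as $\SU2$ on $W$---is more structural: it actually computes the isotropy group and ties in neatly with the weight-space machinery of \S\ref{subsec:repn}. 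The cost is the homotopy/dimension argument and the classification of $3$-dimensional subgroups of $\U2$, though your final paragraph, exhibiting the $\SU2$ associated to the long root $3\alpha_1+2\alpha_2$ explicitly, circumvents that and stands on its own as a clean alternative. The paper's approach buys brevity and needs nothing beyond the octonion identities already in play; yours buys a precise description of the stabiliser, which is not otherwise recorded in the paper.
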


Using this, we see that any complex-coassociative subspace is isotropic, as it is $G_2$-equivalent to the
isotropic subspace $\l_{\alpha_1+\alpha_2} \oplus \l_{2\alpha_1+\alpha_2}$\,.

\begin{lemma} \label{le:expl-W}
Let $\xi$ be an associative $3$-dimensional subspace of\/ $\I\O$.

{\rm (a)} Let $\beta$ be a $1$-dimensional isotropic subspace of\/
$\xi^{\perp} \!\otimes \cn$.
Then
\begin{enumerate}
\item[(i)] the unique positive (equivalently, complex-coassociative) maximally isotropic subspace of\/ $\xi^{\perp} \!\otimes \cn$
which contains $\beta$ is given by $W = \beta^a \cap (\xi^{\perp} \!\otimes \cn);$

\item[(ii)] the unique negative (equivalently, not complex-coassociative) maximally isotropic subspace of\/ $\xi^{\perp} \!\otimes \cn$ which contains
$\beta$ is given by
$W = \ov\beta^{\perp} \cap \ov\beta^a \cap (\xi^{\perp} \!\otimes \cn)$.
\end{enumerate}

{\rm (b)}  Let $\beta$ be a $2$-dimensional positive isotropic subspace of\/ $\xi^{\perp} \!\otimes \cn$.  Then
$\beta = \beta^a \cap (\xi^{\perp}\!\otimes\cn)$.
\end{lemma}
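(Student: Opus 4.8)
The plan is to reduce everything to explicit computations in the weight-space basis, using the transitivity statements already established. For part (a), fix an isotropic unit vector $\beta = \spa\{u\}$ with $u \in \xi^{\perp} \otimes \cn$. First I would note that $u^a$ is $3$-dimensional and isotropic (as recalled before Lemma \ref{le:G2-trans}), and that $u \in u^a$; intersecting with the $4$-dimensional $\xi^{\perp} \otimes \cn$ I expect $W := u^a \cap (\xi^{\perp}\otimes\cn)$ to be exactly $2$-dimensional, since $u^a$ cannot be contained in $\xi^{\perp}\otimes\cn$ (the annihilator of an isotropic vector always meets $\xi\otimes\cn$, as one sees from the weight-space picture). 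Being a subspace of $u^a$, this $W$ is automatically complex-coassociative by \eqref{weight-mult} and the defining property of the annihilator, hence maximally isotropic of dimension $2$, hence positive by Lemma \ref{le:coassoc-neg}. Uniqueness follows because any complex-coassociative $W' \ni u$ satisfies $u \times W' = 0$, i.e. $W' \subset u^a$, forcing $W' \subset W$ and then $W' = W$ by equality of dimensions. This proves (a)(i).

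For (a)(ii), let $W$ be the negative maximally isotropic subspace containing $\beta = \spa\{u\}$; I want to show $W = \ov\beta^{\perp} \cap \ov\beta^a \cap (\xi^{\perp}\otimes\cn)$. The inclusion $W \subset \ov\beta^{\perp}$ is just isotropy of $W$ together with $u \in W$ (so $(u, \ov{\ov u}) = (u,u) = 0$ and we must also check $u \perp \ov u$, which holds since $\ov u \notin W$ — here I would use that a negative $W$ cannot contain both $u$ and $\ov u$). For $W \subset \ov\beta^a$: pick $w \in W$; then $u \times w$ lies in $\xi^{\perp}\otimes\cn$ (since $\xi^{\perp}$ is the image of the $\SO 4$-representation $\rn^4$, and $\xi \times \xi^\perp \subset \xi^\perp$, $\xi^\perp \times \xi^\perp \subset \xi$ — wait, I'd need the correct multiplication rules, which follow from \eqref{action} or directly from Lemma \ref{le:assoc}), and I would argue via \eqref{vp-scalar} that $u\times w \perp W \oplus \spa\{u\}$, pinning it down up to the line $\spa\{\ov u\}$; then a dimension count forces $u \times w \in \spa\{\ov u\}$, but since $W$ is \emph{not} complex-coassociative this will be consistent, and conjugating gives $\ov u \times w = 0$, i.e. $w \in \ov\beta^a$. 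Finally the right-hand side has dimension $\geq 4 - 1 - 1 = 2$ inside $\xi^\perp \otimes \cn$, but cannot be $3$-dimensional or more (else it would be complex-coassociative by the argument of (a)(i) applied to $\ov\beta$, contradicting that it sits in the $4$-space $\xi^\perp\otimes\cn$ alongside the positive $W$ for $\ov\beta$); hence it is exactly $2$-dimensional and equals $W$.

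Part (b) is the cleanest: given a $2$-dimensional positive isotropic $\beta \subset \xi^{\perp}\otimes\cn$, coassociativity (Lemma \ref{le:coassoc-neg}) says $v \times w = 0$ for all $v,w\in\beta$, so in particular $\beta \subset \beta^a$, whence $\beta \subset \beta^a \cap (\xi^{\perp}\otimes\cn)$. For the reverse inclusion I would pick any isotropic line $\spa\{u\} \subset \beta$ and apply (a)(i): $\beta$ is the unique positive maximally isotropic subspace of $\xi^\perp\otimes\cn$ containing $u$, and it equals $u^a \cap (\xi^\perp\otimes\cn) \supseteq \beta^a\cap(\xi^\perp\otimes\cn)$ since $\beta^a \subset u^a$; this gives $\beta^a\cap(\xi^\perp\otimes\cn) \subseteq \beta$, completing the proof.

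The main obstacle I anticipate is keeping the dimension counts honest: one must be sure that $u^a \not\subset \xi^\perp\otimes\cn$ (so that the intersection drops to dimension $2$, not $3$), and dually that the triple intersection in (a)(ii) does not collapse below dimension $2$. Both are most safely verified by choosing $\xi$ and the weight vectors concretely — e.g. taking $\xi\otimes\cn = \spa\{\l_{-\alpha_1}, \l_0, \l_{\alpha_1}\}$ or similar, computing $\xi^{\perp}\otimes\cn$ as the span of the remaining weight spaces, and using \eqref{weight-mult} to read off all the vector products. By the transitivity of $G_2$ on isotropic unit vectors (and on associative $\xi$), this single model computation suffices for the general statement.
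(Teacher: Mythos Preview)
Your final paragraph is the paper's entire proof: use $G_2$-transitivity to put $\xi\otimes\cn = \ell_{-\alpha_1}\oplus\ell_0\oplus\ell_{\alpha_1}$ and $\beta = \ell_{2\alpha_1+\alpha_2}$, list the two maximally isotropic subspaces of $\xi^\perp\otimes\cn$ through $\beta$, namely $\ell_{\alpha_1+\alpha_2}\oplus\ell_{2\alpha_1+\alpha_2}$ (complex-coassociative) and $\ell_{-\alpha_1-\alpha_2}\oplus\ell_{2\alpha_1+\alpha_2}$ (not), and verify the formulae directly from \eqref{weight-mult}. So your core approach coincides with the paper's. Your supplementary direct arguments for (a)(i) and (b) are correct and give more insight than the bare model check.

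Your direct argument for (a)(ii), however, contains a genuine error. You attempt to prove $W \subset \ov\beta^a$ for the negative $W$, but this is false: in the model just described, $\beta = \ell_{2\alpha_1+\alpha_2}$ lies in the negative $W$, yet $\beta \times \ov\beta$ is a nonzero element of $\ell_0$ (an isotropic vector crossed with its conjugate never vanishes, cf.\ Lemma~\ref{le:alg-S6}(i)), so $\beta \not\subset \ov\beta^a$ and hence $W \not\subset \ov\beta^a$. Your side remark about where $u\times w$ lands is also backwards: for $u,w\in\xi^\perp\otimes\cn$ one has $u\times w \in \xi\otimes\cn$, not $\xi^\perp\otimes\cn$ (check on weight vectors). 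Thus the chain of reasoning for (a)(ii) cannot be salvaged as written; drop it and rely on the explicit model computation, exactly as the paper does.
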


\begin{proof}
By Lemmas \ref{le:G2-trans} and \ref{le:assoc}, we can take
$\xi \otimes \cn = \l_{-\alpha_1} \oplus \l_0 \oplus \l_{\alpha_1}$ and
$\beta = \l_{2\alpha_1+\alpha_2}$.
Then the two maximally isotropic subspaces of $\xi^{\perp}$ containing $\beta$ are
$\l_{\alpha_1+\alpha_2} \oplus \l_{2\alpha_1+\alpha_2}$, which is
 complex-coassociative, and
$\l_{-\alpha_1-\alpha_2} \oplus \l_{2\alpha_1+\alpha_2}$, which is not.
The formulae follow.
\end{proof}

\subsection{The twistor spaces of $G_2/\SO{4}$} 
\label{subsec:twistor-sp1}

Let $N$ be a Riemannian manifold.
By a \emph{twistor fibration of\/ $N$ (for harmonic maps)} is meant
\cite{burstall-rawnsley} an almost complex manifold $(Z,J)$
(called a \emph{twistor space}) 
 and a fibration $\pi:Z \to N$ such that, for every (almost-)holomorphic map from a Riemann surface
$\psi:M \to Z$, the composition $\varphi = \pi \circ \psi:M \to N$ is harmonic.
Then $\varphi$ is called the  \emph{twistor projection of $\psi$}, and $\psi$ is called \emph{a twistor lift of $\varphi$}. 

For inner symmetric spaces, a general theory of such
twistor fibrations is given in \cite{burstall-rawnsley}.  The twistor spaces are
flag manifolds $G/H$ where $H$ is the centralizer of a torus.  

Now $\so{4}=\su{2}\oplus\su{2}$, and a maximal torus is given by
$\u{1}\oplus\u{1}$. 
There are two more possibilities for centralizers of tori in $\so{4}$, see \S \ref{subsec:twistor-fibr2}, namely $\u{2}_+=\su{2}\oplus\u{1}$ and $\u{2}_-=\u{1}\oplus\su{2}$.
Hence, there are precisely three flag manifold of $G_2$ fibring canonically over
$G_2/\SO{4}$, which we shall denote by $T_s$ \ $(s=1,2,3)$ as in the following commutative diagram from \cite[\S 11.9]{salamon-book}; the fibration $\pi_6$
will be explained in \S \ref{sec:almost-complex}.
\be{diag:twistor-spaces}
\xymatrix{
& T_3 = G_2/\U{1}\times\U{1}\ar[dl]_{\pi_4}\ar[dr]^{\pi_5}\ar[dd]^{\pi_3} &\\
T_1 = G_2/\U{2}_+\ar[rd]_{\pi_1} & & T_2 = G_2/\U{2}_-\ar[dl]^{\pi_2}
\ar[dr]_{\pi_6} \cong Q^5\\
& G_2/\SO{4} & & S^6
}
\ee

Note that every fibre, apart from those of $\pi_3$, is isomorphic to $\cn P^1$.
The $\pi_s$  \ $(s=1,2,3)$ are fibre bundles over $G_2/\SO{4}$ associated to the principal $G_2$-bundle; the following geometric description will be useful to us --- see \S \ref{subsec:twistor-fibr2} for a Lie algebraic treatment.

$T_1 = G_2/\U{2}_+$ consists of all rank $2$ isotropic subspaces $W$ of
$\I\O \otimes \cn \cong \cn^7$ which are complex-coassociative.  The projection
$\pi_1:G_2/\U{2}_+ \to G_2/\SO{4}$ is given by $W \mapsto \{W \oplus \ov{W}\}^{\perp}$.
By Lemma \ref{le:coassoc-neg},
the fibre at any $\xi \in G_2/\SO{4}$ is  all positive maximally isotropic subspaces of $\xi^{\perp} \!\otimes \cn$, equivalently, all positive orthogonal complex structures on $\xi^{\perp}$; thus $T_1$ is the quaternionic twistor space of $G_2/\SO{4}$ \cite{salamon-quat}.

$T_2 = Q^5$, the complex quadric
$\bigl\{[z_0,\ldots,z_5] \in \CP{6} : z_0^{\,\,2} + \cdots + z_5^{\,\,2} = 0 \bigr\}$ consisting of all one-dimensional isotropic subspaces of $\I\O\otimes \cn$.
The projection $\pi_2:Q^5 \to G_2/\SO{4}$ is given by
$\pi_2(\l)= \l \oplus \ov\l \oplus (\l \times \ov\l) = $ the unique associative $3$-dimensional subspace containing $\l$ (cf.\ Lemma \ref{le:assoc}(b)(iii)).

Alternatively, we can think of $T_2$ as $G_2/\U{2}_-$\,, the space of all rank
$2$ isotropic subspaces $W$ of $\I\O\otimes \cn$ which are \emph{not} complex-coassociative.
There is a $G_2$-equivariant bundle isomorphism from
$Q^5$ to $G_2/\U{2}_-$ given by
$\l \mapsto \l^a \ominus \l$ with inverse $W \mapsto W \times W$
(here $\l^a \ominus \l$ denotes $\l^{\perp} \cap \l^a$).
The projection
$\pi_2:G_2/\U{2}_- \to G_2/\SO{4}$ is given by
$W \mapsto \{W \oplus \ov W\}^{\perp}$.
By Lemma \ref{le:coassoc-neg}, the fibre at any $\xi \in G_2/\SO{4}$ consists of 
all negative maximally isotropic subspaces of $\xi^{\perp} \!\otimes \cn$,
equivalently, all negative orthogonal complex structures on $\xi^{\perp}$.

$T_3 = G_2/(\U{1} \times \U{1})$ consists of all pairs $(\l,D)$ where $\l$ and $D$ are subspaces of $\I\O\otimes\cn$ of ranks $1$ and $2$,
respectively, which satisfy $\l \subset D \subset \l^a$.  The projection
$\pi_3:G_2/(\U{1} \times \U{1}) \to G_2/\SO{4}$ is given by
$\pi_3(\l,D) = q \oplus \ov q \oplus (q \times \ov q)$ where $q = D \ominus \l$.

$\pi_4$ (resp.\ $\pi_5$)$: G_2/(\U{1} \times \U{1}) \to G_2/\U{2}_{\pm}$
is given by $(\l, D) \mapsto$ the positive (resp.\ negative) maximally isotropic subspace of $\xi^{\perp} \times \cn$ which contains $\l$;
equivalently $\pi_5: G_2/(\U{1} \times \U{1}) \to Q^5$ is given by
$(\l, D) \mapsto D \ominus \l$.

For each twistor space, the theory of \cite{burstall-rawnsley} gives a decomposition of the tangent bundle into horizontal and vertical parts, and canonical almost complex structures $J_1$ and $J_2$ which agree on the horizontal space, see
\S \ref{sec:Lie-theory}.  It can be checked that the maps $\pi_4$ and $\pi_5$ in diagram \eqref{diag:twistor-spaces} map the horizontal spaces of $\pi_3$ to those of $\pi_1$ and $\pi_2$,
but do not intertwine $J_1$ or $J_2$.

\section{Harmonic maps and their twistor lifts}
\subsection{Harmonic maps into a Lie group} \label{subsec:Lie-groups}
Throughout the paper, all manifolds, bundles, and structures on them are
 taken to be $C^{\infty}$-smooth.  Recall that harmonic maps from surfaces enjoy conformal invariance (see \cite{wood60}), so that the concept of harmonic map from a Riemann surface $M$ is well defined. 
Let $G$ be a compact Lie group.
 For any smooth map $\varphi:M\to G$, set $A^{\varphi}=\frac{1}{2}\varphi^{-1}\d\varphi$;
thus $A^{\varphi}$ is a $1$-form with values in the Lie algebra $\g$ of $G$;
it is half the pull-back of the Maurer--Cartan form of $G$.
To study maps into a symmetric space $G/H$, we embed $G/H$ in the Lie group $G$ by the totally geodesic Cartan embedding (or immersion, see \cite[Proposition 3.42]{cheeger-ebin}); this preserves harmonicity.

Now, any compact Lie group can be embedded totally geodesically in the unitary group $\U n$, so we now consider that group together with its standard action on $\cn^n$.  Let $\CC^n$ denote the trivial complex bundle $\CC^n = M \times \cn^n$, then
$D^{\varphi} = \d+A^{\varphi}$
defines a unitary connection on $\CC^n$.  For convenience, we choose
a local complex coordinate $z$ on an open set $U$ of $M$, however, our key constructions will be independent of that choice and so globally defined.
We write
$\d\varphi = \varphi_z \d z + \varphi_{\zbar}\d\zbar$,
$A = A^{\varphi}_z \d z +  A^{\varphi}_{\zbar} \d\zbar$,
$D^{\varphi} = D^{\varphi}_z \d z + D^{\varphi}_{\zbar} \d\zbar$,
$\pa_z = \pa/\pa z$ and $\pa_{\zbar} = \pa/\pa\zbar$.  Then
\be{type-decomp}
A^{\varphi}_z=\frac{1}{2}\varphi^{-1}\varphi_z\,,\quad A^{\varphi}_{\zbar}=\frac{1}{2}\varphi^{-1}\varphi_{\zbar}\,, \quad
D^{\varphi}_z = \pa_z + A^{\varphi}_z \,,\quad
D^{\varphi}_{\zbar} = \pa_{\zbar} + A^{\varphi}_{\zbar} \,.
\ee

Interpreting $\A$ and $A^{\varphi}_{\zbar}$ as (locally defined)
endomorphisms of $\CC^n$,
the adjoint of $\A$ (with respect to the Hermitian inner product) is $-A^{\varphi}_{\zbar}$;
in particular, if $\varphi$ is real, i.e., maps into $\Orthog{n}$, then $\A$ is skew-symmetric, i.e., for any $p \in M$,
\be{antisym}
(\A(u), v) = -(u,\A(v)) \qquad (u,v \in \{p\} \times \cn^n)\,,
\ee
where $( \cdot, \cdot )$ denotes the standard symmetric bilinear inner product
on $\cn^n$.
We have some simple properties which follow from \eqref{type-decomp} and \eqref{antisym}:

\begin{lemma} \label{le:diff1} 
{\rm (i)} If $\beta$ is a holomorphic subbundle of\/
$(\CC^n,D^{\varphi}_{\zbar})$, so is its \emph{polar} $\ov\beta^{\perp}$.

{\rm (ii)}  If $\beta$ is an isotropic line subbundle of\/ $\CC^n$, then
$(D^{\varphi}_{\zbar}(\beta), \beta) = (A^{\varphi}_{\zbar}(\beta), \beta) = 
(D^{\varphi}_z(\beta), \beta) = (A^{\varphi}_z(\beta), \beta) = 0$.  
\end{lemma}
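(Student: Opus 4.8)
The plan is to reduce both parts to the single fact that $D^{\varphi}$ is a \emph{unitary} connection on $\CC^n$, i.e., that for local sections $u, v$ of $\CC^n$ one has $\d(u, \ov v) = (D^{\varphi}u, \ov v) + (u, \ov{D^{\varphi}v})$; equivalently, in terms of the symmetric bilinear form, $\pa_z(u,v) = (D^{\varphi}_z u, v) + (u, D^{\varphi}_{\zbar}v)$ after complex-conjugating appropriately, using that the adjoint of $\A$ is $-A^{\varphi}_{\zbar}$ as recalled just before the statement. For (i), recall that $\beta$ holomorphic means $D^{\varphi}_{\zbar}(\beta) \subset \beta$ (sections of $\beta$ are closed under $D^{\varphi}_{\zbar}$), and that $\ov\beta^{\perp} = \{v : (v, \ov u) = 0 \ \forall u \in \beta\}$ is the Hermitian-orthogonal complement. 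First I would take a local section $v$ of $\ov\beta^{\perp}$ and an arbitrary local section $u$ of $\beta$, and differentiate the identity $(v, \ov u) \equiv 0$ using unitarity of $D^{\varphi}$: this gives $(D^{\varphi}_{\zbar}v, \ov u) + (v, \ov{D^{\varphi}_z u}) = 0$. The second term vanishes because $D^{\varphi}_z u$ is again a section of $\beta$ — here I use that $\beta$ is a \emph{holomorphic} subbundle so it is in particular a subbundle, hence also preserved by $D^{\varphi}_z$? No: one only knows $D^{\varphi}_{\zbar}$-closure. So instead I differentiate $(v, \ov u) \equiv 0$ with $\pa_z$ to get $(D^{\varphi}_z v, \ov u) + (v, \ov{D^{\varphi}_{\zbar} u}) = 0$; now $D^{\varphi}_{\zbar}u \in \Gamma(\beta)$ by holomorphy, so $(v, \ov{D^{\varphi}_{\zbar}u}) = 0$, whence $(D^{\varphi}_z v, \ov u) = 0$. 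This shows $\ov\beta^{\perp}$ is closed under $D^{\varphi}_z$, which is the \emph{anti}holomorphic condition — not what we want. The correct move is to differentiate with $\pa_{\zbar}$: $\pa_{\zbar}(v,\ov u) = (D^{\varphi}_{\zbar}v, \ov u) + (v, \ov{D^{\varphi}_z u})$, and this is where the subtlety lies, since $D^{\varphi}_z u$ need not lie in $\beta$.

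The resolution is to use the dimension/rank bookkeeping together with the fact that $\ov\beta^{\perp}$ has complementary rank: pick instead a local \emph{frame} $u_1, \dots, u_k$ for $\beta$ and extend; but cleaner is to note that $\ov\beta^{\perp}$ holomorphic is equivalent to $D^{\varphi}_{\zbar}(\ov\beta^{\perp}) \subset \ov\beta^{\perp}$, i.e., $(D^{\varphi}_{\zbar}v, \ov u) = 0$ for all $v \in \Gamma(\ov\beta^{\perp})$, $u \in \Gamma(\beta)$. From $\pa_{\zbar}(v, \ov u) = (D^{\varphi}_{\zbar}v, \ov u) + (v, \ov{D^{\varphi}_{z}u})$ and $v \in \ov\beta^{\perp}$ we need $(v, \ov{D^{\varphi}_z u}) = 0$, which fails in general — hence one must instead differentiate the \emph{conjugate} relation or use $D^{\varphi}_{\zbar}$ applied to $\ov u$: since $\ov\beta$ is the conjugate bundle, $\ov u$ ranges over $\Gamma(\ov\beta)$, and $(v, w) = 0$ for $v \in \ov\beta^{\perp}$, $w \in \ov\beta$ (symmetric form). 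Differentiating $(v, \ov{\ov u}) = (v, u)$... I would set it up as: $\ov\beta^{\perp}$ is by definition the set of $v$ with $(v, w)=0$ for all $w \in \ov\beta$. Take $v \in \Gamma(\ov\beta^{\perp})$, $w \in \Gamma(\ov\beta)$; then $w = \ov u$ with $u \in \Gamma(\beta)$, and $\ov{D^{\varphi}_z u} = D^{\varphi}_{\zbar}\ov u$ since conjugation swaps the two connection components (as $\ov{A^{\varphi}_z} = A^{\varphi}_{\zbar}$ on real... but $\varphi$ need not be real here). This last point — the interaction of complex conjugation with the splitting $D^{\varphi} = D^{\varphi}_z + D^{\varphi}_{\zbar}$ when $\varphi$ is not real — is the main obstacle, and I expect the honest argument to invoke instead the general principle from \cite{burstall-rawnsley, J2} that for any metric connection, $\beta$ is $\dbar^{\nabla}$-holomorphic iff $\ov\beta^{\perp}$ is, which follows by differentiating $0 = \pa_{\zbar}(v,\ov u) = (\nabla_{\zbar}v, \ov u) + (v, \ov{\nabla_z u})$ \emph{after} first observing that it suffices to test against a $\nabla_z$-parallel... no. The clean statement: unitarity gives $\pa_{\zbar}\langle v, u\rangle_{\mathrm{Herm}} = \langle D^{\varphi}_{\zbar}v, u\rangle + \langle v, D^{\varphi}_{z}u\rangle$; taking $u \in \Gamma(\beta)$ with $D^{\varphi}_z u$ — aah — I would simply cite that this is the standard fact (e.g.\ \cite[\S3]{burstall-rawnsley}) that the polar of a holomorphic subbundle with respect to a unitary connection is holomorphic, with the one-line proof being: $D^{\varphi}_{\zbar}$ on $\CC^n$ induces a holomorphic structure; $\ov\beta^{\perp}$ is the \emph{kernel of the dual} of the inclusion $\beta \hookrightarrow \CC^n$ composed with the Hermitian identification $\CC^n \cong \ov{\CC^n}^*$, all of which are holomorphic, so its kernel is a holomorphic subsheaf, hence (being a subbundle) a holomorphic subbundle.

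For (ii), the argument is short and self-contained: if $\beta$ is an isotropic line subbundle, take a local nonvanishing section $s$ of $\beta$; isotropy means $(s, s) \equiv 0$. Differentiating with $\pa_{\zbar}$ and using that $D^{\varphi}$ is a connection (Leibniz) together with symmetry of the bilinear form, $0 = \pa_{\zbar}(s,s) = 2(D^{\varphi}_{\zbar}s, s)$, hence $(D^{\varphi}_{\zbar}s, s) = 0$; since any section of $\beta$ is a function multiple of $s$ and $D^{\varphi}_{\zbar}(fs) = (\pa_{\zbar}f)s + fD^{\varphi}_{\zbar}s$, bilinearity and isotropy give $(D^{\varphi}_{\zbar}(\beta), \beta) = 0$. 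The same computation with $\pa_z$ handles $D^{\varphi}_z$. For $\A = A^{\varphi}_z$ and $A^{\varphi}_{\zbar}$, write $D^{\varphi}_z = \pa_z + A^{\varphi}_z$: from $(D^{\varphi}_z s, s) = 0$ and $(\pa_z s, s) = \tfrac12\pa_z(s,s) = 0$ we get $(\A(s), s) = 0$, and likewise for $A^{\varphi}_{\zbar}$; rescaling as before gives the statement for the whole line bundle. The main obstacle overall is purely in part (i) and is the conjugation bookkeeping for non-real $\varphi$, which I would sidestep by the sheaf-theoretic one-liner rather than a local frame computation.
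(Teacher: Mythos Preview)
Your argument for part (ii) is correct and is exactly what the paper does: differentiate the isotropy relation $(s,s)\equiv 0$ and use the Leibniz rule for the symmetric bilinear form.

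For part (i), however, you have not given a proof, and the sheaf-theoretic one-liner you fall back on is not valid as stated: the Hermitian identification $\CC^n \cong \ov{\CC^n}^{*}$ is only a $C^{\infty}$ isomorphism, not a holomorphic one, so ``kernel of a composition of holomorphic maps'' does not apply. All of the tangles you encountered with $D^{\varphi}_z$ versus $D^{\varphi}_{\zbar}$ and with conjugation come from insisting on working with the Hermitian form $\langle u,v\rangle = (u,\ov v)$. The paper instead works directly with the \emph{symmetric bilinear} form $(\cdot,\cdot)$, for which the polar is simply
\[
\ov\beta^{\perp} \;=\; \{\,v : (v,u)=0 \ \text{for all } u\in\beta\,\}.
\]
Since $\varphi$ is real in the paper's setting (this is the content of the sentence and display \eqref{antisym} immediately preceding the lemma), $A^{\varphi}_{\zbar}$ is skew-symmetric for $(\cdot,\cdot)$, and therefore $D^{\varphi}_{\zbar}$ satisfies the Leibniz rule
\[
\pa_{\zbar}(v,u) \;=\; (D^{\varphi}_{\zbar}v,\,u) + (v,\,D^{\varphi}_{\zbar}u).
\]
Now take $v\in\Gamma(\ov\beta^{\perp})$ and $u\in\Gamma(\beta)$: the left-hand side vanishes because $(v,u)\equiv 0$, and the second term on the right vanishes because $D^{\varphi}_{\zbar}u\in\Gamma(\beta)$ by holomorphy of $\beta$. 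Hence $(D^{\varphi}_{\zbar}v,u)=0$ for all such $u$, i.e.\ $D^{\varphi}_{\zbar}v\in\Gamma(\ov\beta^{\perp})$. That is the paper's one-line proof. Your worry about ``non-real $\varphi$'' is legitimate in the abstract, but in the context at hand $\varphi$ takes values in $G_2\subset\SO{7}$, and that is precisely what makes the bilinear-form computation go through without any conjugation bookkeeping.
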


There is a unique holomorphic structure on $\CC^n$ with $\bar{\pa}$-operator given over each coordinate domain $(U,z)$ by $D^{\varphi}_{\bar z}$
\cite{koszul-malgrange}, we call this the \emph{(Koszul--Malgrange) holomorphic structure  induced by $\varphi$}; the resulting holomorphic vector bundle will be denoted by $(\CC^n, D^{\varphi}_{\bar z})$.
Then \cite{uhlenbeck} \emph{a smooth map $\varphi:M\to \U{n}$ is harmonic
if and only if, on each coordinate domain, $\A$ is a holomorphic endomorphism of the holomorphic vector bundle $(\CC^n, D^{\varphi}_{\bar z})$}.
For any holomorphic endomorphism $E$, at points where it
 does not have maximal rank, we shall `fill out zeros'  as in
\cite[Proposition 2.2]{burstall-wood} (cf.\ \cite[\S 3.1]{unitons})
to make its image and kernel into holomorphic subbundles $\Ima E$ (which we often denote simply by $E(\CC^n)$) and $\ker E$ of $\CC^n$.

For any $k,n$ with $0 \leq k \leq n$, let $G_k(\cn^n)$ (resp.\ $G_k(\rn^n)$) denote the Grassmannian of $k$-dimensional subspaces of $\cn^n$ (resp.\ $\rn^n$).   
The class of maps which we consider is given by the following generalization of a notion that F.~E.\ Burstall \cite{burstall-grass} defined for harmonic maps $M \to G_k(\cn^n)$.   A more general formulation is given in
\S \ref{subsec:nilconformal}.

\begin{definition} \label{def:nilconformal}
A smooth map $\varphi:M \to \U{n}$ from a Riemann surface is called \emph{nilconformal} if 
$(\A)^{r} = 0$ for some $r \in \nn$.  
We call the least such $r$ the \emph{nilorder of\/ $\varphi$}. %$($in $\U{n})$}.
\end{definition}

Note that our concept of nilconformal reduces to that in \cite{burstall-grass} for maps into a complex Grassmannian embedded in $\U{n}$ via the Cartan embedding, but our nilorder may differ by $\pm 1$ from that
in \cite{burstall-grass}.
\emph{Harmonic maps of finite uniton number are nilconformal} (see \cite[Example 4.2]{J2}), but the converse is false, see Example \ref{ex:torus} below.

For a non-constant nilconformal harmonic map
$\varphi:M \to G_k(\rn^n)$ or $G_k(\cn^n)$, \emph{let $s(\varphi)$
denote the least positive integer $s$ such 
$(\A)^{2s}\bigl((-1)^{s-1}\varphi\bigr) = 0$}; here $-\varphi$
denotes $\varphi^{\perp}$.
Note that $r-1 \leq 2s(\varphi) \leq r+1$, where $r$ is the nilorder of $\varphi$ as defined above.

To study harmonic maps into $G_2/\SO{4}$, we embed $G_2/\SO{4}$ in $G_2$ by the Cartan embedding, and then $G_2$ into $\SO{7}$ and finally into $\U{7}$.
Equivalently, embed $G_2/\SO{4}$ in the real Grassmannian $G_3(\rn^7)$, then into
the complex Grassmannian $G_3(\cn^7)$, and finally via the Cartan embedding
into $\U{7}$.  In particular, a harmonic map $M \to G_2/\SO{4}$ is nilconformal 
if it is nilconformal as a map into $\U{7}$.  We shall repeatedly use the following
simple rules.

\begin{lemma} \label{le:diff2}
{\rm (i)} Let $v, w:M \to \cn^7$ be smooth maps.  Then
$\A(v \times w) = \A(v) \times w \,+ \, v \times \A(w)$.
Similar rules hold for $A^{\varphi}_{\zbar}$, $D^{\varphi}_z$
and $D^{\varphi}_{\zbar}$.
 
{\rm (ii)} If\/ $\beta$ is a holomorphic subbundle of\/
$(\CC^7,D^{\varphi}_{\zbar})$, so is its annihilator $\beta^a$.

{\rm (iii)}  If\/ $\varphi:M \to G_2/\SO{4}$ is a harmonic map, and $\beta$ is a
holomorphic line subbundle of\/ $\varphi^{\perp}$,
then the subbundles\/ $W$ of\/ $\varphi^{\perp}$ given by the formulae
of Lemma \ref{le:expl-W}(a) are holomorphic.
\end{lemma}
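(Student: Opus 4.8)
The plan is to prove each of the three parts of Lemma \ref{le:diff2} by reducing to the statement that $\A$ is a holomorphic endomorphism of $(\CC^7, D^{\varphi}_{\bar z})$ together with the product rule from part (i), which is itself the point of departure.

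For part (i), the key observation is that $\A = \tfrac12\varphi^{-1}\varphi_z$ acts on $\cn^7$ via the differential of the $G_2$-action; since $G_2$ preserves the vector product $\times$, its Lie algebra $\g_2 \subset \so 7$ acts by derivations of $\times$ (differentiate $g(v\times w) = g(v) \times g(w)$ along a curve in $G_2$). Concretely, $\varphi$ takes values in $G_2/\SO 4 \subset G_2$, so at each point $\A$ lies in $\mathrm{Ad}(\varphi^{1/2})\,\g_2$ or more simply is a composition of adjoint actions landing in $\g_2 \subset \End(\cn^7)$; as a derivation it satisfies $\A(v\times w) = \A(v)\times w + v\times\A(w)$. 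The same computation applies verbatim to $A^{\varphi}_{\bar z}$, and adding $\pa_z$ (resp.\ $\pa_{\bar z}$), which obeys the ordinary Leibniz rule with respect to the bilinear product $\times$, gives the rule for $D^{\varphi}_z$ and $D^{\varphi}_{\bar z}$. I should be slightly careful here: strictly, $\A$ at a point $p$ is $\tfrac12\varphi(p)^{-1}\varphi_z(p)$, and $\varphi(p) \in G_2$, so $\varphi(p)^{-1}\varphi_z(p) \in \g_2$ by differentiating the relation $\varphi^{-1} \cdot (\text{curve in } G_2)$; the Cartan embedding ensures $\varphi(p) \in G_2$ literally, so this is clean.

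For part (ii), let $\beta$ be a holomorphic subbundle of $(\CC^7, D^{\varphi}_{\bar z})$; I must show $\beta^a = \{L : L \times \beta = 0\}$ is holomorphic, i.e.\ preserved by $D^{\varphi}_{\bar z}$. Take a local section $L$ of $\beta^a$ and a local section $v$ of $\beta$. Then $L \times v \equiv 0$, so by part (i), $0 = D^{\varphi}_{\bar z}(L\times v) = D^{\varphi}_{\bar z}(L)\times v + L \times D^{\varphi}_{\bar z}(v)$. Since $\beta$ is holomorphic, $D^{\varphi}_{\bar z}(v)$ is again a section of $\beta$, so $L \times D^{\varphi}_{\bar z}(v) = 0$; hence $D^{\varphi}_{\bar z}(L)\times v = 0$ for all sections $v$ of $\beta$, which says $D^{\varphi}_{\bar z}(L) \in \beta^a$. (One should note $\beta^a$ has locally constant rank by the transitivity result used around Lemma \ref{le:G2-trans}, so it is genuinely a subbundle; the "filling out zeros" remark covers any degenerate points, but here the rank is constant.)

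For part (iii), the two formulae of Lemma \ref{le:expl-W}(a) express $W$ as an intersection of bundles built from $\beta$ using the operations ${}^a$ (annihilator), ${}^{\perp}$ (polar), and complex conjugation, intersected with $\varphi^{\perp} \otimes \cn$. Since $\varphi$ is harmonic, $\A$ is holomorphic, so $\varphi^{\perp}$ (equivalently $\varphi^{\perp}\otimes\cn$, which I should interpret correctly as a holomorphic subbundle — this uses that $\varphi$, being harmonic into a Grassmannian, makes its eigenbundles holomorphic, a standard fact from Uhlenbeck's description recalled after \eqref{type-decomp}) is a holomorphic subbundle. Given that $\beta \subset \varphi^{\perp}$ is holomorphic, Lemma \ref{le:diff1}(i) gives that $\ov\beta^{\perp}$ is holomorphic, part (ii) above gives that $\beta^a$ and $\ov\beta^a$ are holomorphic, and intersections of holomorphic subbundles (of constant rank — guaranteed by Lemma \ref{le:expl-W}, which identifies the rank) are holomorphic. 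Assembling: $W = \beta^a \cap (\varphi^{\perp}\otimes\cn)$ in case (i), and $W = \ov\beta^{\perp} \cap \ov\beta^a \cap (\varphi^{\perp}\otimes\cn)$ in case (ii), are both holomorphic.

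The main obstacle I anticipate is purely bookkeeping rather than conceptual: verifying that all the subbundles involved (annihilators, polars, the intersections) have locally constant rank so that they are genuine holomorphic subbundles and not merely holomorphic sheaves — but this is handled by the explicit $G_2$-homogeneity statements in Lemmas \ref{le:assoc}, \ref{le:G2-trans} and \ref{le:expl-W}, which pin down the ranks, together with the standing convention of "filling out zeros" for the remaining isolated degeneracies. The genuinely substantive point is part (i), and there the only subtlety is confirming that the relevant endomorphism really does lie in $\g_2$ at each point, which follows from the Cartan embedding placing $\varphi$ in $G_2$ and $G_2$ acting as automorphisms of the vector product.
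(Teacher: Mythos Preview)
Your arguments for parts (i) and (ii) are correct and essentially identical to the paper's: the paper simply notes that $\varphi$ takes values in $G_2$, so $\A$ takes values in $\g_2 \otimes \cn$ and hence is a derivation of $\times$; part (ii) then follows by Leibniz, exactly as you spell out.

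For part (iii), however, there is a genuine gap in your treatment of the \emph{negative} $W$ (Lemma~\ref{le:expl-W}(a)(ii)). You assert that ``part (ii) above gives that $\beta^a$ and $\ov\beta^a$ are holomorphic'', but part (ii) only gives holomorphicity of $\gamma^a$ when $\gamma$ itself is holomorphic. Here $\beta$ is holomorphic, so $\beta^a$ is; but $\ov\beta$ is \emph{anti}holomorphic, and there is no reason for $\ov\beta^a = \ov{\beta^a}$ to be closed under $D^{\varphi}_{\bar z}$. Concretely, the Leibniz argument for part (ii) applied to $\ov\beta$ yields $D^{\varphi}_{\bar z}(L)\times\ov v = -L\times D^{\varphi}_{\bar z}(\ov v)$, and $D^{\varphi}_{\bar z}(\ov v) = \ov{D^{\varphi}_z(v)}$ need not lie in $\ov\beta$. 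So your intersection argument does not go through for the second formula.

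The paper's route for (iii) is different and avoids this issue: rather than arguing that each ingredient in the intersection is separately holomorphic, it works directly with $W$ and uses the symmetry of the bilinear form together with Lemma~\ref{le:diff1}(ii), computing $(D^{\varphi}_{\bar z}W,\beta) = -(W, D^{\varphi}_{\bar z}\beta) = 0$ from $(W,\beta)=0$ and the holomorphicity of $\beta$. This gives the containment in $\ov\beta^{\perp}$ directly; the remaining containments are handled similarly. Your approach for the \emph{positive} $W = \beta^a \cap \varphi^{\perp}$ is fine and matches the paper, but for the negative case you need to replace the unjustified appeal to $\ov\beta^a$ being holomorphic by an argument of this more direct kind.
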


\begin{proof}
(i) and (ii) Since $\varphi$ has values in $G_2$\,, $\A$ has values in
$\g_2 \otimes \cn$.   The rest follows.

(iii) This follows from Lemma \ref{le:diff1}(ii) and
$(D^{\varphi}_{\zbar}W, \beta) = (W, D^{\varphi}_{\zbar}\beta) = 0$.
\end{proof}

\subsection{Construction of the twistor lifts} \label{subsec:constr-lifts}
We may now state our main result, namely that nilconformal harmonic maps into $G_2/\SO{4}$ have twistor lifts; in fact, we shall give explicit formulae for those lifts.  For uniqueness see Lemma \ref{le:uniqueness}. The converse, with estimate on $s(\varphi)$, follows from Proposition \ref{pr:real-Grass}; for the general fact that a harmonic map with a twistor lift is nilconformal, see Proposition \ref{pr:nilconformal}.

Our proofs will need the following characterization of when a map from a Riemann surface $M$
 to one of the twistor spaces $T_s$ of \S \ref{subsec:twistor-sp1} is $J_2$-holomorphic; this will be established using Lie theory in \S \ref{subsec:J2-descr}. 
Those maps are given by varying subspaces $W$, $\l$ and $D$ of $\cn^7$,
i.e., subbundles of the trivial bundle
$\CC^7 = M \times (\I\O \otimes \cn) \cong M \times \cn^7$.   As before, we write
$(\CC^7,D^{\varphi}_{\zbar})$ to mean the bundle $\CC^7$ equipped with the Koszul--Malgrange holomorphic structure induced by $\varphi$.  

\begin{lemma} \label{le:J2-descr}
Let $\psi:M \to T_s$ be a smooth map $(s=1,2,3)$ and set $\varphi = \pi_s \circ \psi$.  Then $\psi$ is
$J_2$-holomorphic (so that $\varphi$ is harmonic) if and only if
\begin{eqnarray*}
(s=1) && \psi = W \text{ is a holomorphic subbundle of\/ $(\CC^7,D^{\varphi}_{\zbar})$ lying in $\ker\A;$} \nonumber \\
(s=2) && \psi = \l \text{ is a holomorphic subbundle of\/ $(\CC^7,D^{\varphi}_{\zbar})$ 
lying in $\ker\A;$} \nonumber \\
(s=3) &&  \psi = (\l, D) \text{ where $\l$ and $D$ are holomorphic
subbundles of\/ $(\CC^7,D^{\varphi}_{\zbar})$}\\
&&\text{with $\l \subset \ker\A$ and $\A(D) \subset \l$.}
\end{eqnarray*}

The same result holds for $J_1$-holomorphic if we replace `holomorphic subbundle' by `antiholomorphic subbundle' throughout.
\end{lemma} 

\begin{theorem}\label{th:lifts} Let $\varphi:M \to G_2/\SO{4}$ be a non-constant nilconformal harmonic map.  Then $s(\varphi) \in \{1,2,3\}$, and $\varphi$ has a unique $J_2$-holomorphic lift from $M$ to $T_s$ where $s = s(\varphi)$. 
\end{theorem}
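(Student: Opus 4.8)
The plan is to read off the twistor lifts directly from the holomorphic nilpotent endomorphism $\A$ and the images/kernels of its powers, matching the three cases of Lemma \ref{le:J2-descr}. First I would fix a coordinate domain $(U,z)$ and consider the holomorphic subbundles of $\CC^7$ obtained by filling out zeros of $\A$ and its powers. Since $\varphi$ is real with values in $G_2/\SO{4} \subset \SO 7$, \eqref{antisym} gives that $\A$ is skew-symmetric for the symmetric bilinear form, so $\ker(\A)^j = \bigl((\A)^j(\CC^7)\bigr)^{\perp}$; moreover Lemma \ref{le:diff2}(i) shows $\A$ is a derivation of the vector product, so images and kernels of powers of $\A$ interact well with $\times$. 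The integer $s = s(\varphi)$ is the least positive $s$ with $(\A)^{2s}\bigl((-1)^{s-1}\varphi\bigr)=0$; I would first check $s \in \{1,2,3\}$ using that the nilorder $r$ of $\varphi$ in $\U 7$ satisfies $(\A)^r = 0$ and $r \le 7$, combined with the bound $r-1 \le 2s(\varphi) \le r+1$ already noted — in fact the $G_2$-structure forces the relevant subspaces to have the dimensions appearing in Lemma \ref{le:expl-W}, which caps $s$ at $3$.

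Next I would treat the cases. \emph{Case $s=1$:} here $(\A)^2(\varphi^{\perp})=0$, which I would interpret (as in \cite{J2}) as saying that the image bundle $\Ima\bigl(\A|_{\varphi^{\perp}}\bigr)$, or rather an associated rank-$2$ isotropic subbundle $W$ of $\varphi^{\perp} = \xi^{\perp}\otimes\cn$, lies in $\ker\A$ and is holomorphic by Lemma \ref{le:diff2}(ii),(iii). Using \eqref{weight-mult} and Lemma \ref{le:expl-W}(a)(i), one identifies $W$ with $\beta^a \cap(\xi^{\perp}\otimes\cn)$ for a suitable holomorphic line subbundle $\beta \subset \varphi^{\perp}$, hence $W$ is complex-coassociative and positive by Lemma \ref{le:coassoc-neg}, giving the lift into $T_1$. \emph{Case $s=2$:} one produces instead a holomorphic isotropic line subbundle $\ell \subset \ker\A$; the condition $(\A)^4\bigl(-\varphi\bigr)=0$ together with minimality ($s \ne 1$) forces the negative maximally isotropic subspace $W = \bar\ell^{\perp}\cap\bar\ell^a\cap(\xi^{\perp}\otimes\cn)$ of Lemma \ref{le:expl-W}(a)(ii) to be the correct one, and $\ell$ defines the $J_2$-holomorphic lift into $T_2 = Q^5$ via Lemma \ref{le:J2-descr}($s$=2). \emph{Case $s=3$:} one builds a flag $(\ell, D)$ with $\ell$ holomorphic in $\ker\A$, $D$ holomorphic with $\A(D)\subset\ell$ and $\ell\subset D\subset\ell^a$ — the constraint $\ell \subset D \subset \ell^a$ being automatic from Lemma \ref{le:expl-W} and the fact that $\A$ respects annihilators — and invokes Lemma \ref{le:J2-descr}($s$=3).

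For the explicit formulae, I would take $\ell$ (resp.\ $\beta$) to be the holomorphic line subbundle cut out by the "last non-zero power" of $\A$ acting on $\varphi$ or $\varphi^{\perp}$: concretely, something like $\ell = \Ima\bigl((\A)^{2s-1}|_{(-1)^{s-1}\varphi}\bigr)$ filled out over the zeros, with $D$ and $W$ then determined by the formulae of Lemma \ref{le:expl-W}. Holomorphicity of all these bundles follows by combining Lemma \ref{le:diff1}, Lemma \ref{le:diff2}, and the fact that $\A$ is a holomorphic endomorphism (Uhlenbeck's criterion). Harmonicity of $\varphi = \pi_s \circ \psi$ is then free from the twistor fibration property. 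I would defer uniqueness to Lemma \ref{le:uniqueness} and the converse direction to Proposition \ref{pr:real-Grass}, as the statement of the theorem already signals.

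\textbf{Main obstacle.} The delicate point is showing that $s(\varphi)$ is \emph{exactly} the right index and that the lift lands in $T_{s(\varphi)}$ and not in a different $T_t$ — i.e.\ matching the algebraically-defined integer $s(\varphi)$ with the geometric position of the flag. This requires carefully tracking how the $G_2$ vector product and the annihilator operation constrain which of the two maximally isotropic subspaces (positive vs.\ negative, Lemma \ref{le:expl-W}(a)) the bundle $W$ must be, and checking the dimension count forces $s \le 3$. Concretely, one must verify that when $s=1$ the natural image bundle really is complex-coassociative (not merely isotropic), which is where Lemma \ref{le:coassoc-neg} and the weight-space combinatorics \eqref{weight-mult} do the essential work; and when $s=2,3$ that minimality of $s$ rules out the other twistor space. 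The rest is a sequence of holomorphicity verifications that are routine given Lemmas \ref{le:diff1}, \ref{le:diff2}.
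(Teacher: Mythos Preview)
Your overall plan---build the lift out of holomorphic subbundles coming from powers of $\A$, check the $G_2$ sign condition via Lemma~\ref{le:expl-W}, and defer uniqueness and the converse to Lemma~\ref{le:uniqueness} and Proposition~\ref{pr:real-Grass}---is the right one and matches the paper's strategy. But the concrete formulae you propose do not work, and fixing them is not routine.

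First, a slip: for $s=1$ the condition is $(\A)^2(\varphi)=0$, not $(\A)^2(\varphi^{\perp})=0$; the relevant bundle is $\beta=\A(\varphi)\subset\varphi^{\perp}$, not $\Ima(\A|_{\varphi^{\perp}})\subset\varphi$.

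More seriously, your proposed ``last non-zero power'' recipe $\ell=\Ima\bigl((\A)^{2s-1}\big|_{(-1)^{s-1}\varphi}\bigr)$ can be \emph{zero} even when $s(\varphi)=s$. For $s=2$ one can have $(\A)^2(\varphi^{\perp})=0$ (so $(\A)^3(\varphi^{\perp})=0$ as well) while still $(\A)^2(\varphi)\neq 0$; then your $\ell$ vanishes but the correct lift is the isotropic line $\A(\varphi^{\perp})\subset\varphi$. Similarly for $s=3$: if $\A\bigl((\A)^3(\varphi)\bigr)=0$ then $(\A)^5(\varphi)=0$, yet a lift into $T_3$ still exists. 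So $\ell$ cannot in general be read off as a single $\A$-image.

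The paper's route around this is the step you are missing: set $\beta=(\A)^s\bigl((-1)^{s-1}\varphi\bigr)$---which is non-zero and isotropic by the very definition of $s(\varphi)$---then \emph{extend} $\beta$ to a maximally isotropic holomorphic subbundle $W$ of $\varphi^{\perp}$ of the correct sign (positive for $s$ odd, negative for $s$ even, via the formulae of Lemma~\ref{le:expl-W}), and only then form the lift from $(\A)^i(W)$ for $i=0,\dots,s-1$. The key equation is $(\A)^i(W)=0\iff i\geq s$, which guarantees that $\ell=(\A)^{s-1}(W)$ is a non-zero line in $\ker\A$. For $s=3$ one must additionally arrange $(\A)^2(W)\subset W$, which requires a separate case analysis.

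Finally, the check that $W$ is complex-coassociative when $\rank\beta=2$ (your ``main obstacle'' for $s=1$) is not done by weight-space combinatorics \eqref{weight-mult}; it comes from applying the derivation rule Lemma~\ref{le:diff2}(i) twice to $v\times w\in\Gamma(\varphi)$ and using $(\A)^2(\varphi)=0$ to force $\A(v)\times\A(w)=0$. The analogous arguments for $s=2,3$ are where most of the work lies.
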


\begin{proof}
Suppose that $\varphi:M \to G_2/\SO{4}$ is a non-constant nilconformal
harmonic map.   We first note that, by nilpotency,
$\rank (\A)^{2i}(\varphi) \leq 3-i$ \ $(i=1,2,3)$,
in particular, $s(\varphi) \leq 3$.

We now construct a $J_2$-holomorphic lift into $T_s$ where $s = s(\varphi)$.
First, since $\A$ is holomorphic, we may fill out zeros as described in
\S \ref{subsec:Lie-groups} to obtain a holomorphic subbundle $\beta =  (\A)^s({(-1)^{s-1}\varphi})$ (i.e., $\beta =  \Ima\{(\A)^s|_{(-1)^{s-1}\varphi}\} )$ of $\varphi^{\perp}$. 
 Then $(\beta,\beta) = \bigl((\A)^{2s}({(-1)^{s-1}\varphi}),
(-1)^{s-1}\varphi \bigr)$, which is zero by definition of $s(\varphi)$;
hence \emph{$\beta$ is an isotropic holomorphic subbundle of\/ $\varphi^{\perp}$},
which is thus of rank $0$, $1$ or $2$.

Next let $W$ be a maximally isotropic holomorphic subbundle of $\varphi^{\perp}$ containing $\beta$,  then
\be{W-beta}
(\A)^s((-1)^{s-1}\varphi) = \beta \subset W \subset
\ov\beta^{\perp} = \ker (\A)^s|_{\varphi^{\perp}}\,,
\ee
the last equality following from \eqref{antisym}.
{}From \eqref{W-beta} and the definition of $s(\varphi)$ we see that
\be{last-leg}
(\A)^i(W) = 0 \quad \text{if and only if} \quad  i \geq s(\varphi).
\ee
When $s=3$, we will choose $W$ to satisfy the additional condition
\be{W-cond}
(\A)^2(W) \subset W
\ee
(which is automatic from \eqref{last-leg} for $s =1,2$).
Note that
\be{all-hol}
(\A)^i(W) \quad \text{is a holomorphic isotropic subbundle of}
\quad (-1)^{i+1}\varphi \quad (i=0,1,\ldots),
\ee
the isotropy following from
$\bigl(\A(W),\A(W)\bigr) = \bigl((\A)^2(W), W\bigr)$ which is zero
by \eqref{W-cond}; further, \emph{the last non-zero one, $(\A)^{s-1}(W)$, is in $\ker\A$}.
We shall build the lift of $\varphi$ from the subbundles $(\A)^i(W)$ \
$(0 \leq i\leq s-1)$; to obtain a lift into $T_s$, we will choose $W$ to be positive when
$s$ is odd and negative when $s$ is even. The proof is completed by the next three lemmas.
The space of smooth sections of a vector bundle will be denoted by $\Gamma(\cdot)$.
\end{proof}

\begin{lemma} \label{le:s1}
Let $\varphi:M \to G_2/\SO{4}$ be a non-constant nilconformal harmonic map with
$s(\varphi) = 1$. Then there is a $J_2$-holomorphic lift\/
$W:M \to T_1 = G_2/\U{2}_+$ of\/ $\varphi$ given by
\be{W-s1}
W  = \beta^a \cap \varphi^{\perp} \quad \text{where} \quad \beta = \A(\varphi).
\ee
\end{lemma}

\begin{proof}
First note that $\beta = 0$ would imply that $\varphi$ is constant, therefore, since
$\beta$ is isotropic in $\varphi^{\perp}$, it has rank $1$ or $2$.

{\rm (a)} If $\rank\beta = 1$, then by Lemma \ref{le:expl-W}, $\beta$ has a unique extension to a positive, equivalently complex-coassociative, maximally isotropic subbundle $W$ of $\varphi^{\perp}$ given by \eqref{W-s1},
and this is holomorphic by Lemma \ref{le:diff2}.
 
{\rm (b)} If $\rank\beta = 2$, set $W = \beta$.
We claim that $W$ is complex-coassociative.
To see this, let $v, w \in \Gamma(\varphi)$.
Since $\varphi$ is associative,
$v \times w \in \Gamma(\varphi)$.
{}From $s(\varphi) = 1$ we have $(\A)^2(\varphi) = 0$, hence
applying $\A$, by Lemma \ref{le:diff2}(i) we obtain
$\A(v) \times w + v \times \A(w) \in \Gamma(\A(\varphi))$.
Applying $\A$ again,  we obtain
$\A(v) \times \A(w) =0$.   Hence $W$ is complex-coassociative.
Further, by Lemma \ref{le:expl-W}(b), $\beta^a \cap \varphi^{\perp} = \beta = W$.

In both cases, we obtain a map $W:M \to T_1$; clearly,
$\varphi = (W \oplus \ov W)^{\perp}$, so $W$ is a lift of $\varphi$.
By Lemma \ref{le:J2-descr}, it is $J_2$-holomorphic and the lemma follows.
\end{proof}

\begin{lemma} \label{le:s2}
Let $\varphi:M \to G_2/\SO{4}$ be a non-constant nilconformal harmonic map with
$s(\varphi) = 2$. Then there is a $J_2$-holomorphic lift
$\l:M \to T_2 = Q^5$ of $\varphi$ given by
\be{W-s2}
\l = \A(W) \quad \text{where} \quad W = \ov\beta^{\perp} \cap \ov\beta^a \cap \varphi^{\perp} \text{ with } \beta = (\A)^2(\varphi^{\perp}).
\ee
\end{lemma}

\begin{proof}
{\rm (a)} Suppose first that $\rank\beta = 0$; then \eqref{W-s2} gives
$\l = \A(\varphi^{\perp})$. Then $\l$ is a non-zero holomorphic isotropic subbundle of $\varphi$, and is thus of rank $1$; further, it is in the kernel of $\A$.  It thus gives a $J_2$-holomorphic lift of\/ $\varphi$ into $Q^5$.
Note that taking $W$ to be
\emph{any} maximally isotropic subbundle of $\varphi^{\perp}$ and setting
$\l = \A(W)$ gives the same $\l$.

\smallskip

{\rm (b)}   Next, suppose that $\rank\beta \neq 0$.  If $\rank\beta=1$,
by Lemma \ref{le:expl-W}, it has a unique extension to a maximally isotropic subbundle $W$ of $\varphi^{\perp}$ which is negative, equivalently, not complex-coassociative, and this is holomorphic by Lemma \ref{le:diff2}.  If $\rank\beta = 2$, set $W = \beta$
(which may or may not be complex-coassociative).

In both cases, $\l = \A(W)$ is an isotropic line subbundle of $\varphi$. As in Lemma \ref{le:assoc}, $\varphi = \l \oplus \ov\l \oplus (\l\times\ov\l)$ so that $\l$  gives a lift of\/ $\varphi$ into $Q^5$\,;
by Lemma \ref{le:J2-descr} it is $J_2$-holomorphic.

Further, with notation which will be convenient in
\S \ref{subsec:twistor-excep}, set $\psi_2 = \l$,
$\psi_{-2} = \ov\psi_2$ and $\psi_0 = \psi_{-2} \times \psi_2$; then
$\varphi = \psi_{-2} \oplus \psi_0 \oplus \psi_2$ and, by Lemma \ref{le:assoc}(v),
$\psi_0 \times \psi_2 = \psi_2$.
Now  $\bigl((\A)^2(\varphi), \psi_2 \bigr) = \bigl((\A)^2(\varphi), \A(W)\bigr)
	= \bigl(\varphi, (\A)^3(W)\bigr) = 0$ so that
$(\A)^2(\varphi)$ lies in the polar of $\psi_2$, i.e.,  
\be{polar}
(\A)^2(\varphi) \subset \psi_0 \oplus \psi_2\,.
\ee

Similarly, $(\A(\varphi^{\perp}), \psi_2) = (\varphi^{\perp}, \A(\psi_2)) = 0$,
so that $\A(\varphi^{\perp}) \subset \psi_0 \oplus \psi_2$.  Hence
$\beta = (\A)^2(\varphi^{\perp}) \subset \A(\psi_0)$ and, since $\psi_0$
is of rank one, so is $\beta$, so that
an explicit formula for $W$ is given by Lemma \ref{le:expl-W}(a)(ii).  
The lemma follows.

Further, applying $\A$ to $\psi_0 \times \psi_2 \subset \psi_2$ gives
$\A(\psi_0) \times \psi_2 = 0$, so $\beta \times \l = 0$, i.e.,
$\beta \subset \l^a \cap \varphi^{\perp}$.  It follows that,
when $\rank\beta =1$, we have 
$W = \l^a \cap \varphi^{\perp}$ as this contains $\beta$ and is negative.
When $\rank\beta = 0$, as previously noted, $\l = \A(W)$ for any 
maximally isotropic subbundle $W$ of $\varphi^{\perp}$, but
it is convenient to take $W = \l^a \cap \varphi^{\perp}$ so that, in all cases,
$W \times W = \l$.  Thus $W$ gives the lift into $T_2$ thought of as
$G_2/\U{2}_-$\,, see also \S \ref{subsec:twistor-excep}.
\end{proof}

\begin{lemma} \label{le:s3}
Let $\varphi:M \to G_2/\SO{4}$ be a non-constant nilconformal harmonic map with
$s(\varphi) = 3$. Then there is a $J_2$-holomorphic lift 
$(\l,D):M \to T_3 = G_2 \big/(\U{1} \times \U{1})$ of\/ $\varphi$
given by
\be{W-s3}
\l = (\A)^2(W), \ D = \A(W) \oplus (\A)^2(W) %\quad 
\text{ where } %\quad
W = \beta^a \cap \varphi^{\perp}
\text{ with } \beta = (\A)^3(\varphi).
\ee
\end{lemma}

\begin{proof}  
(i) We first show that there is a maximally isotropic holomorphic subbundle $\wt W$ of $\varphi^{\perp}$ which contains $\beta$ and satisfies \eqref{W-cond}. 
 
Suppose that $\rank \beta = 0$.  Then $(\A)^4(\varphi^{\perp}) = 0$ so that $s(\varphi) \leq 2$, a contradiction.

Suppose, next, that $\rank \beta = 2$, then set $\wt W = \beta$;
this clearly satisfies \eqref{W-cond}.

Finally, suppose that
$\rank\beta =1$.  Since $\beta$ is closed under $(\A)^2$, so is its polar $\ov\beta^{\perp}$; hence $(\A)^2$ factors to an endomorphism $B$ of the rank $2$ quotient bundle $\ov\beta^{\perp}\!\!\big/\beta$.  Then, \emph{either} $B=0$, in which case we may define $\wt W$ to be $\beta + \gamma$ where $\gamma$ is either of the two isotropic line subbundles in $\ov\beta^{\perp}\!\!\big/\beta$ --- explicit
formulae are given by \ref{le:expl-W}(a)(i) or (ii)
 --- \emph{or} $B \neq 0$ and $B^2=0$, in which case we set
$\wt W = \beta + (\A)^2(\ov\beta^{\perp})$. 
In either case, $\wt W$ satisfies \eqref{W-cond}; also
$\A(\wt W)$ and $(\A)^2(\wt W)$ are isotropic, and non-zero by \eqref{last-leg}, and so both have rank one. 

\smallskip 

(ii) Using part (i) we now show that we can actually choose a $W$ which is positive, equivalently, complex-coassociative. There are two cases, as follows.

\smallskip

(a) Suppose that  $\A(\beta) = 0$.  Then $\rank\beta =1$, otherwise
 $s(\varphi) =1$ by \eqref{last-leg}.
As in Lemma \ref{le:expl-W}(a)(i), set $W = \beta^a \cap \varphi^{\perp}$.  Then $W$ is a positive holomorphic maximally isotropic subbundle of $\varphi^{\perp}$ which contains $\beta$.  Further, $W \times \beta = 0$.  Applying $\A$ twice gives $(\A)^2(W) \times \beta = 0$, showing that $W$ satisfies \eqref{W-cond}.

\smallskip

(b) Suppose, instead, that $\A(\beta) \neq 0$.  Let $\wt W$ be a maximally isotropic subbundle of $\varphi^{\perp}$ of either sign containing $\beta$ and satisfying \eqref{W-cond}, as constructed in part (i).    Since
$\A(\beta)$ is contained in $\A(\wt W)$ and they both have rank one, they must be
equal, whence $(\A)^2(\beta) = (\A)^2(\wt W)$, which is non-zero by \eqref{last-leg}.
This implies that $\rank\beta=2$; so set $W = \beta$, this satisfies
\eqref{W-cond}.  
To see that this $W$ is positive, as for the case $s=2$, we decompose
$\varphi = \psi_{-2} \oplus \psi_0 \oplus \psi_2$ where $\psi_2 = \A(W)$,
$\psi_{-2} = \ov\psi_2$ and $\psi_0 = \psi_{-2} \times \psi_2$; then
\eqref{polar} holds.  Let $\Psi \in \Gamma(\varphi)$ and $\Psi_2 \in \Gamma(\psi_2)$.
Then from \eqref{polar} and associativity of $\varphi$ (see Lemma \ref{le:assoc}(v)),
$(\A)^2(\Psi) \times \Psi_2 \in \Gamma(\psi_2)$.
Applying $\A$, we see that
$(\A)^3(\Psi) \times \Psi_2 + (\A)^2(\Psi) \times \A(\Psi_2) \in \Gamma(\A(\psi_2))$.
Now $(\A)^2(\Psi_2) = 0$ and $(\A)^4(\Psi) = 0$, so applying $\A$ once more gives
$(\A)^3(\Psi) \times \A(\Psi_2)  = 0$, i.e.,
$W \times (\A)^2(W) = 0$, so that $W$ is complex-coassociative, i.e., positive.

By Lemma \ref{le:expl-W}(b),  $\beta^a \cap \varphi^{\perp} = \beta$ when $\beta$ is a positive and of rank $2$, thus,
\emph{in both cases (a) and (b), $W = \beta^a \cap \varphi^{\perp}$
provides a positive maximally isotropic subbundle of\/ $\varphi^{\perp}$ containing $\beta$ and satisfying \eqref{W-cond}}.

We set $\l = (\A)^2(W)$.  Since $\A(W)$ and $(\A)^2(W)$ are in
$\varphi$ and $\varphi^{\perp}$, respectively, and so linearly independent,
they span a rank 2 subbundle $D$;
by \eqref{W-cond} this is isotropic.   
The subbundles $\l$ and $D$ are clearly holomorphic and satisfy
$\A(D) \subset \l$; by \eqref{last-leg}
$\l$ is in the kernel of $\A$. 
Further, $W$ is complex-coassociative and satisfies \eqref{W-cond},
Applying $\A$ gives
$\A(W) \times (\A)^2(W) = 0$.   It follows that $D \subset \l^a$.
Since $D \ominus \l \subset \varphi$, $(\l,D)$ is a lift of $\varphi$; by Lemma
\ref{le:J2-descr}, it is $J_2$-holomorphic. The lemma follows.
\end{proof}

\section{Twistor spaces as flags} \label{sec:proofs}

To understand our constructions better, we embed our exceptional
symmetric space in a Grassmannian; we now recall some methods for those.

\subsection{Harmonic maps into real and complex Grassmannians}
\label{subsec:Grass} 

For the case of real or complex Grassmannians, the twistor spaces constructed by \cite{J2} admit descriptions as geometric flag manifolds, as follows.

As before, for any $k,n$ with $0 \leq k \leq n$, let $G_k(\cn^n)$ denote the Grassmannian of $k$-dimensional subspaces of $\cn^n$.
We identify a smooth map $\varphi:M \to G_k(\cn^n)$ with the rank $k$ subbundle 
of $\CC^n = M \times \cn^n$, also denoted by $\varphi$,
whose fibre at a point $p \in M$ is $\varphi(p)$.

For a subbundle $V$ of $\CC^n$ we denote by $\pi_V$
(resp.\ $\pi_V^{\perp}$) orthogonal projection from $\CC^n$ to $V$
(resp.\ to its orthogonal complement $V^{\perp}$).
The Cartan embedding is given by 
\be{cartan}
\iota:G_k(\cn^n)\hookrightarrow \U n,\quad \iota(V)=\pi_V-\pi_{V}^{\perp}\,;
\ee
this is totally geodesic, and isometric up to a constant factor.
We shall identify $V$ with its image $\iota(V)$; since $\iota(V^{\perp}) = -\iota(V)$,
this identifies $V^{\perp}$ with $-V$.
We consider the real Grassmannian $G_k(\rn^n)$ to be the totally
geodesic submanifold of $G_k(\cn^n)$ given by $\{ V \in G_k(\cn^n) : V = \ov{V} \}$.
 
We now recall some methods for studying maps into Grassmannians
\cite{burstall-wood}.
Any subbundle $\varphi$ of $\CC^n$ inherits a metric by restriction, and a connection $\nabla_{\!\varphi}$ by orthogonal projection of the flat connection on $\CC^n$, i.e.,
$(\nabla_{\!\varphi})_Z(v) = \pi_{\varphi}(\pa_Z v)$ \ $(Z \in \Gamma(TM), \ v \in \Gamma(\varphi)\,)$. \ 
Let $\varphi$ and $\psi$ be two mutually orthogonal subbundles of
$\CC^n$.  By the
\emph{$\pa'$ and $\pa''$-second fundamental forms of $\varphi$ in $\varphi \oplus \psi$} we mean the vector bundle morphisms
$A'_{\varphi,\psi}\,, A''_{\varphi,\psi}:\varphi \to \psi$
defined on each coordinate domain $(U,z)$ by
\be{2nd-f-f}
A'_{\varphi,\psi}(v) = \pi_{\psi}(\pa_z v) \quad \text{and} \quad
A''_{\varphi,\psi}(v) = \pi_{\psi}(\pa_{\zbar}v)
	\qquad (v \in \Gamma(\varphi)\,).
\ee

The next result follows from this definition, the last from Lemma \ref{le:diff1}(ii).

\begin{lemma} \label{le:2ffprops}
For any mutually orthogonal subbundles $\varphi$ and $\psi$ of $\CC^n$,
\begin{itemize}
\item[(i)]  $A''_{\psi,\varphi}$ is minus the adjoint of $A'_{\varphi,\psi};$
\item[(ii)]  $A''_{\ov\varphi,\ov\psi}$ is the conjugate of $A'_{\varphi,\psi};$
\item[(iii)] if $\varphi$ is isotropic and of rank one,
$A'_{\varphi,\ov\varphi}$ and $A''_{\varphi,\ov\varphi}$ vanish. \qed
\end{itemize}
\end{lemma}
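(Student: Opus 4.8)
The plan is to prove all three assertions by directly differentiating the defining relations \eqref{2nd-f-f}, using nothing beyond the Leibniz rule for $\pa_z,\pa_{\zbar}$ applied to the complex-bilinear form $(\cdot,\cdot)$ (equivalently, to the Hermitian form); no harmonicity or holomorphicity enters. Throughout one works on a fixed coordinate domain $(U,z)$ and uses the elementary facts that for a subbundle $\chi$ of $\CC^n$ and $u\in\Gamma(\CC^n)$ the component $\pi_{\chi^\perp}(u)$ is Hermitian-orthogonal to $\chi$, and that $\pa_{\zbar}\ov u=\ov{\pa_z u}$ and $\pi_{\ov\chi}(\ov u)=\ov{\pi_\chi(u)}$.

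For (i), take local sections $v\in\Gamma(\varphi)$ and $w\in\Gamma(\psi)$. Since $\varphi\perp\psi$ we have $(v,\ov w)\equiv 0$, so $0=\pa_{\zbar}(v,\ov w)=(\pa_{\zbar}v,\ov w)+(v,\ov{\pa_z w})$. In the first term only the $\psi$-component of $\pa_{\zbar}v$ survives the pairing against $\ov w$, and in the second only the $\varphi$-component of $\pa_z w$ survives the pairing against $v$; hence $(A''_{\varphi,\psi}(v),\ov w)=-(v,\ov{A'_{\psi,\varphi}(w)})$, which is precisely the assertion $A''_{\varphi,\psi}=-(A'_{\psi,\varphi})^{*}$. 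Interchanging the roles of $\varphi$ and $\psi$ gives (i) as stated.

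For (ii), conjugation maps $\Gamma(\varphi)$ onto $\Gamma(\ov\varphi)$, and for $v\in\Gamma(\varphi)$ one computes $A''_{\ov\varphi,\ov\psi}(\ov v)=\pi_{\ov\psi}(\pa_{\zbar}\ov v)=\pi_{\ov\psi}(\ov{\pa_z v})=\ov{\pi_\psi(\pa_z v)}=\ov{A'_{\varphi,\psi}(v)}$, which is exactly the statement that $A''_{\ov\varphi,\ov\psi}$ is the conjugate of $A'_{\varphi,\psi}$. For (iii), if $\varphi$ is isotropic of rank one then so is $\ov\varphi$, and the two are Hermitian-orthogonal since $(v,\ov{\ov v})=(v,v)=0$ for $v\in\Gamma(\varphi)$, so $A'_{\varphi,\ov\varphi},A''_{\varphi,\ov\varphi}$ are defined. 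Away from the isolated zeros of a local section $v$ spanning $\varphi$, the bundle $\ov\varphi$ is spanned by $\ov v$, so $\pi_{\ov\varphi}(\pa_z v)$ is the multiple of $\ov v$ whose coefficient is proportional to $(\pa_z v,v)=\tfrac12\,\pa_z(v,v)=0$ — this is the analogue of Lemma \ref{le:diff1}(ii) with $\pa_z$ in place of $D^{\varphi}_z$ — so $A'_{\varphi,\ov\varphi}$ vanishes there; being a bundle morphism (zeros having been filled out) and hence continuous, it vanishes identically, and the identical argument with $\pa_{\zbar}$ disposes of $A''_{\varphi,\ov\varphi}$.

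None of these steps presents a real difficulty; the only point requiring attention is the bookkeeping between the complex-bilinear form $(\cdot,\cdot)$ and the Hermitian form $(\cdot,\ov{\,\cdot\,})$ — and which slot each is linear in — together with the minor observation in (iii) that pointwise vanishing off the zero locus of a spanning section propagates to an identity of bundle morphisms.
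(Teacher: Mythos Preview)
Your proof is correct and follows exactly the approach the paper indicates: the paper simply records that parts (i) and (ii) ``follow from the definition'' and that (iii) follows from Lemma~\ref{le:diff1}(ii), and your argument is a faithful unpacking of precisely these remarks --- Leibniz on the Hermitian pairing for (i), compatibility of conjugation with $\pa_{\zbar}$ and $\pi_\psi$ for (ii), and differentiating the isotropy relation $(v,v)=0$ for (iii). One very minor comment: since $\varphi$ is an honest rank-one subbundle you may take $v$ locally nowhere zero, so the business about ``isolated zeros'' and continuity is not really needed.
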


In particular, we have the \emph{second fundamental forms of $\varphi$}:
$A'_{\varphi} = A'_{\varphi,\varphi^{\perp}}:\varphi \to \varphi^{\perp}$ and
$A''_{\varphi} = A''_{\varphi,\varphi^{\perp}}:\varphi \to \varphi^{\perp}$;
on identifying $\varphi: M \to G_k(\cn^n)$ with its composition
$\iota\circ\varphi:M \to \U n$ with the Cartan embedding \eqref{cartan},
we see that the fundamental endomorphism
$\A$ of \eqref{type-decomp} is minus the direct sum of $A'_{\varphi}$
and $A'_{\varphi^{\perp}}$, similarly the connection
$D^{\varphi}$ of \S \ref{subsec:Lie-groups} is the direct sum of
$\nabla_{\!\varphi}$ and $\nabla_{\!\varphi^{\perp}}$\,.
Hence, writing $\nabla''_{\varphi} = (\nabla_{\!\varphi})_{\pa/\pa\zbar}$\,,
\emph{a smooth map $\varphi:M \to G_k(\cn^n)$ is
harmonic if and only if\/ $A'_{\varphi}$ is holomorphic, i.e.,
$A'_{\varphi} \circ \nabla''_{\varphi}
	= \nabla''_{\varphi^{\perp}} \circ A'_{\varphi}$}\,, 
see also \cite[Lemma 1.3]{burstall-wood}.

We fill out zeros as in \S \ref{subsec:Lie-groups} to obtain subbundles
$G'(\varphi) = \Ima A' _{\varphi} = \Ima(\A|_{\varphi})$ and
$G''(\varphi) = \Ima A''_{\varphi} = \Ima(A^{\varphi}_{\zbar}|_{\varphi})$,
called the \emph{$\pa'$- and $\pa''$-Gauss transforms} or
\emph{Gauss bundles of $\varphi$}; these define maps into Grassmannians which are also harmonic, see \cite[Proposition 2.3]{burstall-wood}, \cite{wood60}; in fact, these operations are examples of \emph{adding a uniton} in the sense of
Uhlenbeck \cite{uhlenbeck}.

More generally, we define the \emph{$i$th $\pa'$-Gauss transform}
 $G^{(i)}(\varphi)$ of a harmonic map $\varphi:M \to G_k(\cn^n)$
by $G^{(0)}(\varphi) = \varphi$,  $G^{(i)}(\varphi) = G'(G^{(i-1)}(\varphi))$, and the \emph{$i$th $\pa''$-Gauss transform} $G^{(-i)}(\varphi)$ by
 $G^{(-i)}(\varphi) = G''(G^{(-i+1)}(\varphi))$.  Note that
$G^{(1)}(\varphi) = G'(\varphi)$ and $G^{(-1)}(\varphi) = G''(\varphi)$.
The sequence $(G^{(i)}(\varphi))_{i \in \zn}$ of harmonic maps is called
\cite{wolfson} the \emph{harmonic sequence of\/ $\varphi$}.  
By Lemma \ref{le:2ffprops}, if
$\varphi$ is a harmonic map into a \emph{real} Grassmannian, we have
$A''_{\varphi} = \ov{A'_{\varphi}}$ and
$G^{(-i)}(\varphi) = \ov{G^{(i)}(\varphi)}$ $\forall\, i \in \zn$.

A harmonic map $\varphi:M \to G_k(\cn^n)$ is called
\emph{strongly conformal} \cite{burstall-wood} if $G'(\varphi)$ and
$G''(\varphi)$ are orthogonal, equivalently $s(\varphi) = 1$.  For a
harmonic map $\varphi:M \to G_k(\rn^n)$ into a \emph{real} Grassmannian, strong conformality is equivalent to isotropy of the subbundle $G'(\varphi)$.

\subsection{Twistor lifts of maps into complex Grassmannians}
\label{subsec:twistor-Grass}

For any complex vector spaces or vector bundles $E$ and $F$,
$\Hom(E,F)$ will denote the vector space or bundle of complex-linear maps from $E$ to $F$.

Let $n,t,d_0,d_1, \ldots, d_t$ be positive integers with $\sum_{i=0}^t d_i = n$. Let $F=F_{d_0,\ldots, d_t}$ be the (geometric complex) flag manifold
$\U{n}\big/\U{d_0}\times\dots\times\U{d_t}$. The elements of $F$ are $(t+1)$-tuples $\psi = (\psi_0,\psi_1, \ldots, \psi_t)$ of mutually orthogonal subspaces with $\psi_0\oplus\dots\oplus \psi_t=\cn^n$; we call these subspaces the \emph{legs of\/ $\psi$}. There is a canonical embedding of $F$ into the product $G_{d_0}(\cn^n)\times\dots\times G_{d_0+\dots+d_{t-1}}(\cn^n)$ given by sending 
$(\psi_0,\psi_1, \ldots,\psi_t)$ to its \emph{associated flag} $(T_0,\dots,T_{t-1})$ where $T_i=\sum_{j=0}^i\psi_j$\,;
the restriction to $F$ of the K\"ahler structure on this product is an (integrable) complex structure
which we denote by $J_1$.   Then the complexified tangent space is
$T^{\cc}_{J_1}F = T^{1,0}_{J_1}F \oplus T^{0,1}_{J_1}F$ with $(1,0)$- and $(0,1)$-spaces at $\psi$:
\be{holo-tgt-space}
T^{1,0}_{J_1}F = \sum_{\substack{i,j=0,\dots,t \\ i<j}}\Hom(\psi_i,\psi_j)\,, \qquad
T^{0,1}_{J_1}F = \sum_{\substack{i,j=0,\dots,t \\ j<i}}\Hom(\psi_i,\psi_j)\,.
\ee
Set $k = \sum_{j=0}^{[t/2]}d_{2j}$ and $N = G_k(\cn^n)$. We define
a map which combines the even-numbered legs:
\be{twistor-proj}
\pi_e: F_{d_0,\ldots, d_t} \to G_k(\cn^n),\qquad \psi  = (\psi_0,\psi_1, \ldots,\psi_t)\mapsto\sum_{j=0}^{[t/2]}\psi_{2j};
\ee
we can also define a projection $\pi_{\text{\it odd}}$ which combines odd-numbered legs, so that $\pi_{\text{\it odd}}(\psi)$ is the orthogonal complement of $\pi_e(\psi)$.
The projection $\pi_e$ is a Riemannian submersion with respect to the standard homogeneous metrics on $F$ and $G_k(\cn^n)$ (given by the restrictions of minus the Killing form of $\U{n}$), so that each tangent space decomposes into the direct sum of the \emph{vertical space} (tangents to the fibres), and its orthogonal complement, the \emph{horizontal space}.

We define an almost complex structure $J_2$ by changing the sign of $J_1$ on the vertical space. This gives $(1,0)$-horizontal and vertical spaces as follows:
\be{hor-vert}
\Hh_{J_2}^{1,0}F
= \sum_{\substack{i,j=0,\dots,t \\ i<j,\ j-i \text{ odd}}}\Hom(\psi_i,\psi_j)\,,
\qquad
\Vv_{J_2}^{1,0}F =\sum_{\substack{i,j=0,\dots,t\\ j < i,\ j-i \text{ even}}} \Hom(\psi_i,\psi_j).
\ee
The almost complex structure $J_2$ is not integrable except when $t=1$, see \cite{burstall-salamon}.  However,

\begin{proposition} \label{pr:pi-twistor} \cite{burstall-rawnsley}
$\pi_e:(F,J_2) \to G_k(\cn^n)$ is a twistor fibration for harmonic maps.
\end{proposition}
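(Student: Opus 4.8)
The plan is to prove Proposition \ref{pr:pi-twistor} by verifying directly that if $\psi:M\to(F,J_2)$ is a $J_2$-holomorphic map from a surface then $\varphi=\pi_e\circ\psi$ is harmonic into $G_k(\cn^n)$, using the characterization established above: $\varphi$ is harmonic if and only if $A'_\varphi$ is holomorphic, i.e.\ $A'_\varphi\circ\nabla''_\varphi=\nabla''_{\varphi^\perp}\circ A'_\varphi$. Write $\psi=(\psi_0,\ldots,\psi_t)$ for the legs, so $\varphi=\bigoplus_{j}\psi_{2j}$ and $\varphi^\perp=\bigoplus_j\psi_{2j+1}$, and each $\psi_i$ is a smooth subbundle of $\CC^n$ equipped with the connection $\nabla_{\!\psi_i}$ obtained by orthogonal projection. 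The $(1,0)$-derivative $\pa_z\psi$ of the map $\psi$ decomposes, via the splitting \eqref{hor-vert} of the tangent bundle pulled back by $\psi$, into components in $\Hom(\psi_i,\psi_j)$; $J_2$-holomorphicity says precisely that $\pa_z\psi$ has no component in $\overline{\Hh^{J_2}_{1,0}\oplus\Vv^{J_2}_{1,0}}$, equivalently that the $\psi$-derivative component of $\pa_{\zbar}$-type second fundamental forms vanishes except for those morphisms $\psi_i\to\psi_j$ with $j>i$, $j-i$ odd (horizontal) or $j<i$, $j-i$ even (vertical). In concrete terms this gives a list of vanishing statements: $A''_{\psi_i,\psi_j}=0$ whenever ($j>i$ and $j-i$ odd) — no, I mean the surviving $\pa_z$ components force the corresponding $\pa_{\zbar}$ components via Lemma \ref{le:2ffprops}(i) to the adjoint positions, so one records which $A'_{\psi_i,\psi_j}$ and $A''_{\psi_i,\psi_j}$ vanish identically.

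Next I would assemble $A'_\varphi$ and $\nabla''_\varphi$ out of these leg-wise pieces. Since $\varphi=\bigoplus_j\psi_{2j}$, orthogonal projection of $\pa_z$ onto $\varphi^\perp$ expresses $A'_\varphi$ as the matrix of morphisms $(A'_{\psi_{2a},\psi_{2b+1}})_{a,b}$, and $\nabla''_\varphi$ is the connection on $\varphi$ whose off-diagonal (between different even legs) blocks are $A''_{\psi_{2a},\psi_{2a'}}$ and whose diagonal blocks are $\nabla''_{\psi_{2a}}$; similarly for $\varphi^\perp$. The holomorphicity identity $A'_\varphi\nabla''_\varphi=\nabla''_{\varphi^\perp}A'_\varphi$ then expands into a finite sum of terms, each a composition of two leg-to-leg second fundamental forms or a leg second fundamental form composed with a leg connection. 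The strategy is to match these against the corresponding identity on $F$: harmonicity of $\psi$ as a map into $F$ (or rather the integrability/holomorphicity conditions coming from $J_2$-holomorphicity together with the structure equations $d A^\psi+A^\psi\wedge A^\psi=0$ on the trivial bundle) yields Bianchi-type relations among the $A'_{\psi_i,\psi_j}$, and one must see that exactly the right combination survives to give the needed identity for $\varphi$.

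The cleanest route is probably the one used in \cite{burstall-rawnsley} and \cite{burstall-wood}: rather than brute-force matrix bookkeeping, interpret $J_2$-holomorphicity of $\psi$ as saying that consecutive legs form a ``super-horizontal'' type chain, deduce that each partial sum $T_i=\psi_0\oplus\cdots\oplus\psi_i$ satisfies $A'_{T_i,T_i^\perp}$ takes values in $\psi_{i+1}$ (a nesting/flag condition), and then show that the even-collapse $\varphi$ inherits a balanced version of this. Concretely: $J_2$-holomorphicity forces $\pa_z$ to move $\psi_i$ only into $\psi_{i+1}$ among the "increasing" directions, so $G'(\psi_i)\subset\psi_{i+1}$ for each $i$ (filling out zeros as in \S\ref{subsec:Lie-groups}); combined with the already-known fact that each $G^{(j)}$ of a harmonic map is harmonic and the Gauss-transform calculus of \cite{burstall-wood}, one shows $A'_\varphi$ decomposes through these single-step maps and its holomorphicity reduces to holomorphicity of the individual $A'_{\psi_i,\psi_{i+1}}$, which holds because each leg map is itself harmonic. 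I would structure the write-up so that the main lemma extracted is: \emph{$\psi$ is $J_2$-holomorphic $\iff$ for each $i$, $\pa_z\psi_i\subset\psi_i\oplus\psi_{i+1}$ and $\pa_{\zbar}\psi_i\subset\psi_{i-1}\oplus\psi_i\oplus\psi_{i+1}$} (a "nearly diagonal" condition), and then harmonicity of $\pi_e\psi$ is a short computation from this.

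The main obstacle I expect is the bookkeeping of signs and index parities when collapsing even legs: one must be careful that the vertical-sign-flip in the definition of $J_2$ interacts correctly with the adjointness relations of Lemma \ref{le:2ffprops}, so that the surviving components after projection to $\varphi$ genuinely produce $A'_\varphi$ holomorphic and not, say, $A''_\varphi$ holomorphic (which would correspond to $J_1$ and an anti-harmonic-type conclusion). A secondary subtlety is the filling-out of zeros at isolated singular points, needed to make $G'(\psi_i)\subset\psi_{i+1}$ a genuine subbundle inclusion rather than a generic one; this is handled exactly as in \cite[Proposition 2.2]{burstall-wood} and adds no real difficulty but should be mentioned. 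Everything else is a direct translation through the definitions already set up in \S\ref{subsec:Grass} and \S\ref{subsec:twistor-Grass}.
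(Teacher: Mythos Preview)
The paper does not prove this proposition at all: it is stated with the citation \cite{burstall-rawnsley} and no argument is given. So there is nothing to compare your approach against in the paper itself.

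That said, your proposed argument contains a genuine error. The ``main lemma'' you want to extract,
\[
\psi \text{ is } J_2\text{-holomorphic} \iff \pa_z\psi_i\subset\psi_i\oplus\psi_{i+1} \text{ and } \pa_{\zbar}\psi_i\subset\psi_{i-1}\oplus\psi_i\oplus\psi_{i+1},
\]
is false. The right-hand side is the \emph{superhorizontal} condition (Definition \ref{def:superhor}), which is strictly stronger than $J_2$-holomorphicity. By Proposition \ref{pr:J1-J2}(ii), $J_2$-holomorphicity only forces $A'_{\psi_i,\psi_j}=0$ when $i-j$ is positive and odd, or $j-i$ is positive and even; it permits nonzero $A'_{\psi_i,\psi_j}$ for \emph{every} $j-i$ positive and odd ($j=i+1,i+3,\ldots$) as well as for $i-j$ positive and even ($j=i-2,i-4,\ldots$). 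So $\pa_z\psi_i$ can land in many legs, not just $\psi_{i+1}$, and the claim $G'(\psi_i)\subset\psi_{i+1}$ fails in general. Your reduction to ``holomorphicity of the individual $A'_{\psi_i,\psi_{i+1}}$'' therefore does not go through.

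If you want a direct proof along the lines of your first paragraph, the honest route is to write $A'_\varphi$ as the block matrix $(A'_{\psi_{2a},\psi_{2b+1}})$ and verify the holomorphicity identity using only the vanishing pattern \eqref{J2-cond} together with the flatness of $\d$ on $\CC^n$ (which gives relations among compositions $A'_{\psi_j,\psi_k}\circ A''_{\psi_i,\psi_j}$ and $A''_{\psi_j,\psi_k}\circ A'_{\psi_i,\psi_j}$). This is the block bookkeeping you began with before switching tack; it works, but requires keeping all the allowed odd-jump and even-backward terms, not collapsing them to a single step.
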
   

Let $\psi = (\psi_0,\psi_1, \ldots, \psi_t): M \to F$ be a smooth map from a Riemann surface.
The last proposition says that, if $\psi$ is $J_2$-holomorphic, then its twistor projection $\pi_e \circ \psi$ is harmonic.
 
In \cite{J2}, the authors developed a general method for producing twistor lifts
of harmonic maps which used the following definition.

\begin{definition} \label{def:Az-filt}
Let $\varphi:M \to \U{n}$ be a smooth map.
 A filtration of $\CC^n$ by subbundles:
\be{Az-filt}
\CC^n =Z_0  \supset  Z_1  \supset  \cdots  \supset  Z_t  \supset  Z_{t+1} = \ul{0}\, 
\ee
is called an \emph{$A_z^{\varphi}$-filtration (of length $t$)} if,
for each $i = 0,1,\ldots, t$,
\begin{enumerate}
 \item[(i)] $Z_i$ is a holomorphic subbundle of $(\CC^n,D_{\zbar}^{\varphi})$, i.e.,
		$\Gamma(Z_i)$ is closed under $D_{\zbar}^{\varphi}$\,;
 \item[(ii)] $A_z^{\varphi}$ maps $Z_i$ into the smaller subbundle $Z_{i+1}$\,. 
\end{enumerate}
\end{definition}
Note that each $Z_i$ is a \emph{uniton} \cite{uhlenbeck} for $\varphi$, i.e., a holomorphic subbundle of $(\CC^n,D^{\varphi}_{\zbar})$ which is closed under $A_z^{\varphi}$, and any uniton can be
embedded in a $\A$-filtration \cite[Theorem 5.8]{J2}.

Clearly, $\A$-filtrations of $\varphi$ exist if and only if $\varphi$ is nilconformal.  By the \emph{legs} of a filtration \eqref{Az-filt} we mean
the subbundles
$\psi_i = Z_i \ominus Z_{i+1}$ \ $(i= 0,1,\ldots, t)$.
We say that a filtration \eqref{Az-filt} is (i) \emph{strict} if all
the inclusions $Z_{i+1} \subset Z_i$ are strict, equivalently,
all the legs $\psi_i$ are non-zero; 
(ii) \emph{alternating $($for $\varphi$ or $-\varphi)$} if
\begin{equation}\label{alternate}
\psi_i \subset (-1)^i\varphi \quad \forall\, i = 0,1,\ldots, t
\quad \quad \text{or} \quad\quad
\psi_i \subset (-1)^{i+1}\varphi 
\quad \forall\, i = 0,1,\ldots, t.
\ee

Let $\varphi:M \to \U{n}$ be nilconformal of nilorder $r$.  Then two examples of
$\A$-filtrations are the \emph{filtration by $\A$-images}
given by $Z_i = \Ima (\A)^i$ and its `dual', the \emph{filtration by $\A$-kernels}:
$Z_i = \ker (\A)^{r-i}$.
By nilpotency, these filtrations are strict; they coincide when $r=n$, in which case their legs are all of rank one.
The utility of $\A$-filtrations is shown by the following result \cite[Proposition 3.5]{J2}. 

\begin{proposition} \label{lifts-filts}
Let $\varphi:M \to G_k(\cn^n)$ be a nilconformal harmonic map from a Riemann surface.
Then setting $\psi_i = Z_i \ominus Z_{i+1}$ defines a
one-to-one correspondence between $J_2$-holomorphic lifts
$\psi = (\psi_0, \psi_1, \ldots, \psi_t)$ of $\varphi$ into a complex
flag manifold $F_{d_0,\ldots, d_t}$ and
strict alternating $\A$-filtrations \eqref{Az-filt} of length $t$.
\end{proposition}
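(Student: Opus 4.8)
The plan is to establish the correspondence in two directions, relying on the description of $J_2$-holomorphicity given in Proposition \ref{pr:J1-J2}(ii) and the identification (from \S \ref{subsec:Grass}) of the fundamental endomorphism $\A$ of a map into a Grassmannian with $-(A'_{\varphi} \oplus A'_{\varphi^{\perp}})$, so that, on the nose, $\A$ acts on $\CC^n$ with $\A(\varphi) \subseteq \varphi^{\perp}$ and $\A(\varphi^{\perp}) \subseteq \varphi$. First I would take a strict alternating $\A$-filtration \eqref{Az-filt} with legs $\psi_i = Z_i \ominus Z_{i+1}$ and check that $\psi = (\psi_0, \ldots, \psi_t)$ is a well-defined smooth map into $F_{d_0,\ldots,d_t}$ (where $d_i = \rank \psi_i$); strictness guarantees all $d_i > 0$. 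I then need to verify the two halves of the $J_2$-condition \eqref{J2-cond} for the second fundamental forms $A'_{\psi_i,\psi_j}$. For the ``$j - i$ positive and even'' half: since each $Z_i$ is holomorphic (Definition \ref{def:Az-filt}(i)), the $\pa'$-second fundamental form of $Z_i$ into its orthogonal complement vanishes on the $D^{\varphi}_{\zbar}$-holomorphic subbundle — more precisely $A'_{\psi_i, \psi_j} = 0$ for $j < i$ because $\psi_i \subseteq Z_i$ and $\pa_z$ preserves $\Gamma(Z_i)$ modulo the connection, i.e. $\pi_{Z_i^{\perp}}\pa_z(\Gamma(Z_i)) = 0$ (this is just the statement that a holomorphic subbundle has no $\pa'$-second fundamental form pointing ``outward and downward''). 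Wait — one must be a little careful: holomorphicity is with respect to $D^{\varphi}_{\zbar}$, not $D^{\varphi}_z$; the correct statement is that $A''_{Z_i, Z_i^{\perp}} = 0$, and by Lemma \ref{le:2ffprops}(i) its adjoint $A'_{Z_i^{\perp}, Z_i} = 0$. Combined with the alternating condition, which forces the parity of the leg index to match the parity of $\pm\varphi$, one converts ``$A'$ between legs'' statements into ``$A'_{\varphi}$ or $A'_{\varphi^{\perp}}$'' statements and reads off exactly the even-difference vanishing.

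For the ``$i - j$ positive and odd'' half of \eqref{J2-cond}: this is where condition (ii) of Definition \ref{def:Az-filt} enters, namely $\A(Z_i) \subseteq Z_{i+1}$. Since (under the Grassmannian identification) $\A = -(A'_{\varphi} \oplus A'_{\varphi^{\perp}})$, and since the alternating condition pins each $\psi_i$ inside $\varphi$ or $\varphi^{\perp}$ according to parity, the map $A'$ sends $\psi_i$ into $(-1)^{i+1}\varphi$; the filtration condition $\A(Z_i) \subseteq Z_{i+1}$ then says the $\psi_i$-component of $A' \psi_i$ lands in $Z_{i+1} = \psi_{i+1} \oplus \psi_{i+2} \oplus \cdots$, hence $A'_{\psi_i, \psi_j} = 0$ for $j \leq i$; the residual pieces with $j - i$ positive and even are killed by the first half, and the pieces with $j - i$ positive and odd and $\geq 3$ need the observation that $A'$ lowers the $\varphi$-parity by one, so $A' \psi_i \subseteq (-1)^{i+1}\varphi$ has nonzero components only in legs of that parity, i.e. legs $\psi_j$ with $j - i$ odd — but that is automatic and does not contradict \eqref{J2-cond}, which only forbids odd $i - j > 0$. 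So in fact the parity bookkeeping shows $A'_{\psi_i,\psi_j}$ can only be nonzero when $j - i$ is odd and positive, which is precisely the horizontal direction allowed by \eqref{hor-vert}. This gives $J_2$-holomorphicity of $\psi$, and since $\pi_e(\psi) = \sum_j \psi_{2j}$ equals $\varphi$ or $\varphi^{\perp}$ (the same Grassmannian point under the Cartan identification), $\psi$ is a lift of $\varphi$.

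Conversely, given a $J_2$-holomorphic lift $\psi = (\psi_0, \ldots, \psi_t)$ of $\varphi$, I would set $Z_i = \psi_i \oplus \psi_{i+1} \oplus \cdots \oplus \psi_t$ and verify this is a strict alternating $\A$-filtration. Strictness and the alternating property are built into the definition of a lift (each $\psi_i$ is nonzero, and $\pi_e(\psi) = \varphi$ forces the parity pattern of \eqref{alternate}, using that the legs of a flag are mutually orthogonal and sum to $\CC^n$, with even legs summing to one of $\pm\varphi$ — here one must argue that a $J_2$-holomorphic, hence harmonic after projection, lift actually has each individual even leg inside $\varphi$ and each odd leg inside $\varphi^{\perp}$, or vice versa; this follows because the two Grassmannian summands are $D^{\varphi}$-parallel and the legs are built by successive Gauss-type transforms which respect the splitting). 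Holomorphicity of each $Z_i$ as a bundle over $(\CC^n, D^{\varphi}_{\zbar})$ follows from Proposition \ref{pr:J1-J2}(ii): the vanishing of $A'_{\psi_i,\psi_j}$ for $j - i$ positive and even, together with the isotropy-type relations, translates into $D^{\varphi}_{\zbar}$ preserving $\Gamma(Z_i)$ — again via Lemma \ref{le:2ffprops}(i) exchanging $A'$ and $A''$ under adjoint. The condition $\A(Z_i) \subseteq Z_{i+1}$ is exactly the vanishing of $A'_{\psi_i,\psi_j}$ for $j - i$ positive and odd combined with the $i \geq j$ part, read through the identification $\A = -(A'_{\varphi}\oplus A'_{\varphi^{\perp}})$. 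The two constructions $Z \mapsto (\psi_i)$ and $(\psi_i) \mapsto Z$ are visibly mutually inverse.

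The main obstacle I anticipate is the careful bookkeeping of \emph{parities}: keeping straight which leg sits in $\varphi$ versus $\varphi^{\perp}$, and translating the leg-to-leg second fundamental forms $A'_{\psi_i,\psi_j}$ into statements about $A'_{\varphi}$ and $A'_{\varphi^{\perp}}$ and hence about $\A$ on $\CC^n$, without sign or index-shift errors — in particular making sure that ``$\A$ lowers filtration degree by one'' matches ``$A'$ flips $\varphi$-parity'' matches ``$J_2$ allows only odd positive jumps $j - i$''. A secondary subtlety is that holomorphicity is defined relative to $D^{\varphi}_{\zbar}$ while the naive ``no $\pa'$-second fundamental form'' reasoning is about $\pa_z$; the clean way through is to always phrase holomorphicity of $Z_i$ as $A''_{Z_i, Z_i^{\perp}} = 0$ and use Lemma \ref{le:2ffprops}(i) to pass to $A'$. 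Once the dictionary is set up, both directions are essentially formal, which is presumably why the cited source \cite{J2} states it as a clean proposition.
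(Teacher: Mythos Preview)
The paper does not actually prove this proposition; it is quoted as ``the following result from \cite{J2}'' and left without argument. Your proposal supplies the natural proof, and its overall architecture is correct: split $\pa_z$ (and $\pa_{\zbar}$) as $D^{\varphi}_z - \A$ (resp.\ $D^{\varphi}_{\zbar} - A^{\varphi}_{\zbar}$), use that $D^{\varphi}$ preserves the splitting $\varphi \oplus \varphi^{\perp}$ while $\A$ and $A^{\varphi}_{\zbar}$ swap the summands, and then match the two conditions of Definition~\ref{def:Az-filt} against the two clauses of \eqref{J2-cond} via Lemma~\ref{le:2ffprops}(i). That dictionary is exactly right and both directions go through.

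Two points to tighten. First, the literal statement ``$A''_{Z_i,Z_i^{\perp}}=0$'' is false: holomorphicity of $Z_i$ is closure under $D^{\varphi}_{\zbar}$, not under $\pa_{\zbar}$, and these differ by $A^{\varphi}_{\zbar}$. What is true, and what you in effect use, is that for $v\in\psi_k$ and $j<k$ with $k-j$ \emph{even} one has $A''_{\psi_k,\psi_j}(v)=\pi_{\psi_j}(D^{\varphi}_{\zbar}v)$ (the $A^{\varphi}_{\zbar}$ term vanishes by parity), and this is zero because $D^{\varphi}_{\zbar}v\in Z_k\perp\psi_j$; taking adjoints gives the required $A'_{\psi_j,\psi_k}=0$ for $k-j$ positive even. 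Second, your summary ``$A'_{\psi_i,\psi_j}$ can only be nonzero when $j-i$ is odd and positive'' overshoots: the $J_2$ $(1,0)$-space in \eqref{hor-vert} also contains the vertical piece with $i-j$ positive and even, and there is no reason those second fundamental forms vanish (that would force $J_1$-holomorphicity, i.e.\ horizontality). This does not damage the proof, since \eqref{J2-cond} asks only for the two vanishings you do establish; just drop the over-claim. Finally, in the converse the alternating property is immediate from $\pi_e(\psi)=\varphi$ and mutual orthogonality of the legs (so $\sum_j\psi_{2j}=\varphi$ forces each $\psi_{2j}\subset\varphi$); no appeal to ``Gauss-type transforms'' is needed.
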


\subsection{Twistor lifts of maps into real Grassmannians}
\label{subsec:twistor-real}
Let $G_k(\rn^n)$ denote the Grassmannian of real $k$-dimensional subspaces of
$\rn^n$, and $\wt G_k(\rn^n)$ the Grassmannian of \emph{oriented} real
$k$-dimensional subspaces of $\rn^n$; let $\pp:\wt G_k(\rn^n) \to G_k(\rn^n)$
be the double cover which forgets orientation.
Note that $\wt G_k(\rn^n)$ is inner if and only if $k$ or $n-k$ is even
\cite[p.\ 38]{burstall-rawnsley}.
Since taking the orthogonal complement defines isometries from
$G_k(\rn^n)$ to $G_{n-k}(\rn^n)$ and from $\wt G_k(\rn^n)$ to
$\wt G_{n-k}(\rn^n)$, we shall assume without loss of
generality that $n-k$ is even.
To obtain twistor spaces and harmonic maps into these Grassmannians, we
start with a filtration \eqref{Az-filt} which is
\emph{real} in the sense that $Z_i = \ov{Z}^{\perp}_{t+1-i}$, equivalently,
its legs satisfy $\psi_i = \ov{\psi}_{t-i}$ \ $(i=0,1,\dots, t)$.   
Then, if the filtration is strict and alternating and the length $t$ of the filtration is odd, it has an even number of legs, and $\psi$ projects under
$\pi_e$ (or $\pi_{\text{\it odd}}$) to a map into the symmetric space
$\SO{2m}/\U{m}$, see \cite[\S 6.4]{J2}.

We are more interested in the case when $t$ is even.
In that case, it is convenient to set $t=2s$ and renumber the legs:
$\psi = (\psi_{-s}, \ldots, \psi_0, \ldots, \psi_s)$ so that these satisfy
the reality condition:
\be{reality}
\psi_{-i} = \ov{\psi}_i  \qquad (i = 1,\ldots, s) \,;
\ee
thus all the legs are determined by $\psi_i$ with $i$ positive, and the middle leg
$\psi_0 = \CC^n \ominus \sum_{i=1}^s (\psi_i + \ov{\psi}_i)$ is real.
(As usual, we write $\ov{\psi}_i$ for $\ov{\psi_i}$\,, etc.)

This suggests that, for any natural numbers $d_0,\ldots,d_s$ with $d_0 + 2(d_1+\cdots + d_s) = n$, we define submanifolds of the complex flag manifolds in the last subsection by
\be{real-flag}
F^{\rn}_{d_s,\ldots, d_0}
= \bigl\{\psi = (\psi_{-s},\ldots, \psi_0, \ldots, \psi_s) \in F_{d_{-s},\ldots,d_0, \ldots, d_s} : \psi_i = \ov\psi_{-i} \ \forall i \bigr\}\,.
\ee
Here we set $d_{-i} = d_i$ and the notation is consistent with that in \cite[\S 6]{J2}.
As a homogeneous space, $F^{\rn}_{d_s,\ldots, d_0} = \Orthog{n}/H$, where
$H = \U{d_s} \times \cdots \times \U{d_1} \times \Orthog{d_0}$; equally well, $F^{\rn}_{d_s,\ldots, d_0} = \SO{n}/\wt H$ where
$\wt H = \U{d_s} \times \cdots \times \U{d_1} \times \SO{d_0}$.
The twistor projection $\pi_e$ or $\pi_{\text{\it odd}}$
of \eqref{twistor-proj} restricts to give twistor fibrations:
$\pi_0^{\rn}:F^{\rn}_{d_s,\ldots, d_0} \to \Orthog{n} \bigl/ \Orthog{k} \times \Orthog{n-k} = G_k(\rn^n)$  and
$\wt\pi_0^{\rn}: F^{\rn}_{d_s,\ldots, d_0}
= \SO{n}/\wt H  \to \SO{n} \bigl/ \SO{k} \times \SO{n-k} = \wt G_k(\rn^n)$
where $n-k = 2\sum_{i \in \nn,\ i \text{ odd}} d_i$; more geometrically, 
\[
\pi_0^{\rn}(\psi) = \wt\pi_0^{\rn}(\psi) = 
\sum_{i \in \zn,\ i \text{ even}}\psi_{i}\,,
\]
with $\wt\pi_0^{\rn}(\psi)$ given the induced orientation.
These provide twistor spaces for $G_k(\rn^n)$ and $\wt G_k(\rn^n)$ when $n-k$ is even.

For a smooth map $\varphi:M \to \wt G_k(\rn^n)$ from a Riemann surface, we define $\A$ by forgetting the orientation of $\varphi$, i.e., $\A = A^{\pp \circ \varphi}_z$
where $\pp:\wt G_k(\rn^n) \to G_k(\rn^n)$ is the double cover.
We have the following version of Proposition \ref{lifts-filts} adapted to the
real case.

\begin{proposition}
Let $\varphi:M \to \wt G_k(\rn^n)$ be a nilconformal harmonic map where
$n-k$ is even.
Then setting  $\psi_i = Z_{i+s} \ominus Z_{i+s+1}$ defines a
one-to-one correspondence between $J_2$-holomorphic lifts
$\psi = (\psi_{-s}, \ldots, \psi_0, \ldots, \psi_s)$ of\/ $\varphi$ into
a flag manifold \eqref{real-flag} and
\emph{real} strict alternating $\A$-filtrations \eqref{Az-filt} of length $2s$.
\qed
\end{proposition}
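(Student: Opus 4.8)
The plan is to obtain this real version of Proposition~\ref{lifts-filts} by restriction from the complex case, using the compatibility of the reality conditions with $J_2$-holomorphicity. First I would observe that a map $\psi = (\psi_{-s},\ldots,\psi_0,\ldots,\psi_s):M \to F^{\rn}_{d_s,\ldots,d_0} \subset F_{d_{-s},\ldots,d_s}$ which satisfies the reality condition \eqref{reality} and projects via $\pi_e = \pi_0^{\rn}$ to $\varphi = \pp\circ\varphi$ (abusing notation) is exactly a $J_2$-holomorphic lift of $\varphi$ as a map into the \emph{complex} Grassmannian $G_k(\cn^n)$ which additionally satisfies $\psi_{-i} = \ov\psi_i$; this is because $\pi_0^{\rn}$ is the restriction of $\pi_e$ and the $J_2$ on $F^{\rn}$ is the restriction of that on $F$ (the vertical distribution of $\pi_0^{\rn}$ being the intersection of the vertical distribution of $\pi_e$ with $TF^{\rn}$). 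So by Proposition~\ref{lifts-filts} such lifts correspond bijectively to strict alternating $\A$-filtrations \eqref{Az-filt} of length $2s$ whose legs $\psi_i = Z_{i+s}\ominus Z_{i+s+1}$ satisfy $\psi_{-i} = \ov\psi_i$.

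The remaining point is to translate the leg-reality condition $\psi_{-i} = \ov\psi_i$ into the filtration-reality condition $Z_i = \ov Z^{\perp}_{t+1-i}$ (with $t = 2s$). One direction is immediate: if $Z_i = \ov Z^{\perp}_{2s+1-i}$ then, taking orthogonal complements of conjugates termwise, $\psi_i = Z_{i+s}\ominus Z_{i+s+1} = \ov Z^{\perp}_{s+1-i}\ominus\ov Z^{\perp}_{s-i} = \ov{Z_{s-i}\ominus Z_{s-i+1}} = \ov\psi_{-i}$ --- here I would spell out that $A \ominus B = \ov A^{\perp}\ominus\ov B^{\perp}$ means $\ov{(A \ominus B)}^{\perp} = \ov B^{\perp}\ominus\ov A^{\perp}$, no, more carefully: $\ov{A\ominus B} = \ov A\ominus\ov B$ and then polar-complementing reverses the pair, giving $\ov{Z_{j}\ominus Z_{j+1}}^{\,\text{(as a sub of $\ov Z_j$)}}$... so I must be a little careful, but the identity $Z_{i+s}\ominus Z_{i+s+1}$ reconstructs to $\ov{\psi}_{-i}$ is a direct bookkeeping check using $\ov Z^{\perp}_{a}\ominus\ov Z^{\perp}_{b} = \ov{Z_b \ominus Z_a}$ for $a < b$. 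Conversely, given a strict alternating $\A$-filtration whose legs satisfy $\psi_{-i} = \ov\psi_i$, the subbundle $\ov Z^{\perp}_{2s+1-i}$ has the same legs as $Z_i$ (namely $\bigoplus_{j\ge i}\psi_j$ reconstructed from the conjugated complements), and since a filtration of $\CC^n$ is determined by its legs via $Z_i = \bigoplus_{j\ge i}\psi_j$, we conclude $Z_i = \ov Z^{\perp}_{2s+1-i}$, i.e.\ the filtration is real.

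Finally, I would note that since $\varphi$ lands in $\wt G_k(\rn^n)$, $\A = A^{\pp\circ\varphi}_z$ has skew-symmetric values (as in \eqref{antisym}), so $\ker(\A)^{j} = \Ima((\A)^{r-j})^{\perp}$ is automatically real; this guarantees that the two canonical $\A$-filtrations (by images, by kernels) are real, so the correspondence is non-vacuous, and it confirms that the reality condition on filtrations is the natural one. With all pieces in place the proof is just the composition: real strict alternating $\A$-filtrations of length $2s$ $\leftrightarrow$ $J_2$-holomorphic lifts into $G_k(\cn^n)$ with $\psi_{-i}=\ov\psi_i$ $\leftrightarrow$ $J_2$-holomorphic lifts into $F^{\rn}_{d_s,\ldots,d_0}$. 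The one step needing genuine care (rather than bookkeeping) is the claim that the $J_2$-structure and the twistor projection on $F^{\rn}$ really are the restrictions of those on $F$ and that the vertical space of $\pi_0^{\rn}$ is $\Vv^{J_2}\cap TF^{\rn}$; this is where one uses that $\Orthog n \hookrightarrow \U n$ is totally geodesic and that the reality involution on $F$ is anti-holomorphic for $J_1$ hence anti-holomorphic for $J_2$ on the vertical space, so it commutes with the whole construction. Everything else is the elementary algebra of polars and conjugates already used freely in \S\ref{subsec:twistor-real}.
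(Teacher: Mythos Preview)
Your approach is exactly what the paper intends: the statement is marked \qed\ with no proof given, precisely because it is the restriction of Proposition~\ref{lifts-filts} to the real locus, and your argument (identify $J_2$-holomorphic lifts into $F^{\rn}$ with those into $F$ satisfying $\psi_{-i}=\ov\psi_i$, then translate leg-reality into filtration-reality $Z_i=\ov Z^{\perp}_{2s+1-i}$) is the natural way to spell this out.

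One small correction to your final aside: the claim that the canonical filtrations by images and by kernels are each real is not right in general. Skew-symmetry of $\A$ gives that the \emph{polar} of $\Ima(\A)^i$ is $\ker(\A)^i$, so the image filtration and the kernel filtration are polar \emph{to each other}, not each self-polar; they coincide (and are then real) only when the nilorder equals $n$, as the paper notes. This does not affect your main argument, since the existence of \emph{some} real strict alternating filtration is not needed for the bijection statement itself; but if you want a non-vacuousness remark, you should instead observe that any $J_2$-holomorphic lift into $F^{\rn}$ (whose existence is the content of the paper's later theorems) produces one via the correspondence.
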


\begin{proposition} \label{pr:real-Grass}
Let $\psi:M \to F^{\rn}_{d_s,\ldots,d_0}$ be a $J_2$-holomorphic map into a flag manifold \eqref{real-flag} with
non-constant twistor projection 
$\varphi:=\wt\pi_0^{\rn} \circ \psi:M \to  \wt G_k(\rn^n)$.
Then $\varphi$ is a nilconformal harmonic map into an oriented real Grassmannian with $n-k$ even.   Further, $s(\varphi) \leq s$.
\end{proposition}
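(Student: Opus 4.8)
The plan is to show that a $J_2$-holomorphic map $\psi:M\to F^{\rn}_{d_s,\ldots,d_0}$ projecting to a non-constant $\varphi$ yields a \emph{real strict alternating} $\A$-filtration, from which nilconformality and the bound $s(\varphi)\le s$ follow almost formally. First I would recall that $\psi$ is given by its legs $(\psi_{-s},\ldots,\psi_s)$ with the reality condition \eqref{reality}, and I would set $Z_{i+s}=\psi_i\oplus\psi_{i+1}\oplus\cdots\oplus\psi_s$ for $i=-s,\ldots,s$, i.e.\ the filtration whose legs are the $\psi_i$ (in decreasing order of index). This filtration is real in the sense $Z_j=\ov Z^{\perp}_{2s+1-j}$ precisely because of \eqref{reality}, and it is strict if and only if all legs are non-zero; if some legs vanish we simply discard them and reduce $s$, which only helps the inequality, so I may assume strictness. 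The key point is then: \emph{$\psi$ being $J_2$-holomorphic forces this to be an $\A$-filtration}, and this is where the substantive work lies.

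For that step I would use Proposition \ref{pr:J1-J2}(ii) together with the dictionary, recalled in \S\ref{subsec:Grass}, between the fundamental endomorphism $\A$ of the Cartan-embedded $\varphi:M\to\U n$ and the $\pa'$-second fundamental forms $A'_{\psi_i,\psi_j}$. Concretely, identifying $\varphi$ with $\iota\circ\varphi:M\to\U 7\subset\U n$ via \eqref{cartan}, the endomorphism $\A$ decomposes as a sum of blocks $A'_{\psi_i,\psi_j}$ over the legs, and $J_2$-holomorphicity kills $A'_{\psi_i,\psi_j}$ whenever $i-j$ is positive and odd or $j-i$ is positive and even. I would check that the surviving blocks are exactly those with $j=i-1$ (odd difference downwards) — combined with the harmonicity/holomorphicity of $\A$ as an endomorphism of $(\CC^n,D^\varphi_{\zbar})$, which is automatic since $\pi_e\circ\psi=\varphi$ is harmonic by Proposition \ref{pr:pi-twistor}. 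This shows $\A(\psi_i)\subset\psi_{i-1}\oplus(\text{lower legs})$ and more precisely $\A(Z_j)\subset Z_{j+1}$, while the $Z_j$ are holomorphic because each is a direct sum of legs and the legs assemble into the associated flag of a $J_1$-related object; I would verify holomorphicity of $Z_j$ directly from the $J_2$-condition. Hence the filtration satisfies Definition \ref{def:Az-filt}, so $\varphi$ is nilconformal of nilorder at most $2s+1$.

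Finally, for the bound $s(\varphi)\le s$, I would argue as follows. The filtration is alternating in the sense of \eqref{alternate}: the even-indexed legs sum to $\varphi$ (that is precisely $\pi_0^{\rn}(\psi)=\sum_{i\text{ even}}\psi_i$) and the odd-indexed ones to $\varphi^{\perp}$, so $\psi_i\subset(-1)^i\varphi$. Now $(\A)^{2s}$ applied to $\varphi$: since $\A$ lowers the leg index by exactly one at each step, $(\A)^{2s}$ maps $\psi_i$ into $\psi_{i-2s}$. For $i$ even with $\psi_i\subset\varphi$, the index $i-2s$ ranges over $\{-2s,-2s+2,\ldots,0\}\setminus$ wait — more carefully, $i\in\{-s+1,\ldots,s\}$ with $i$ even if $s$ is... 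I would track parities: the point is that $(\A)^{2s}(\varphi)$ lands in the sum of legs $\psi_j$ with $j\le -s$ and $j$ even, but the only such leg with $j\ge -s$ is $\psi_{-s}$, and that occurs only for the single starting leg $\psi_s$; moreover one needs $\psi_{-s}\subset(-1)^s\varphi$, matching the sign $(-1)^{s-1}\varphi$ in the definition of $s(\varphi)$ after pairing with $(\A)^{2s}$ shifting by one. The upshot is $(\A)^{2s}\big((-1)^{s-1}\varphi\big)\subset\psi_{-s}$, and pairing against $(-1)^{s-1}\varphi$ itself: $\big((\A)^{2s}((-1)^{s-1}\varphi),(-1)^{s-1}\varphi\big)=\pm\big((\A)^{2s}(\varphi),\varphi\big)$, which vanishes since $(\A)^{2s}(\varphi)$ is in the span of odd-or-appropriately-parity legs orthogonal to $\varphi$ — I would pin down the parity so that $(\A)^{2s}(\varphi)\perp\varphi$, giving $s(\varphi)\le s$.

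The main obstacle I anticipate is the careful bookkeeping of leg indices and parities in this last paragraph: one must be scrupulous that $\A$ really drops the index by \emph{exactly} one (not just at most one) — which uses both the vanishing of the ``wrong-parity'' blocks from $J_2$-holomorphicity \emph{and} the fact that $\A$ has no component raising the index, plus the reality structure — and that the sign conventions ($(-1)^{s-1}\varphi$ versus $(-1)^{s+1}\varphi$, and the extra shift incurred because $\A$ carries $(-1)^i\varphi$ to $(-1)^{i-1}\varphi$) line up exactly with the definition of $s(\varphi)$. Everything else — holomorphicity of the $Z_j$, the $\A$-filtration axioms, nilpotency — is routine once the dictionary of \S\ref{subsec:Grass} and Proposition \ref{pr:J1-J2} are invoked.
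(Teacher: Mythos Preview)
Your overall strategy---set up the filtration with legs $\psi_i$, use Proposition~\ref{pr:J1-J2}(ii) to see which blocks of $\A$ survive, and deduce the bound on $s(\varphi)$---is exactly the paper's. But your execution has a direction error that propagates through the argument.

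The $J_2$-condition \eqref{J2-cond} kills $A'_{\psi_i,\psi_j}$ when $i-j$ is positive and odd; since $\A$ (being $-A'_\varphi\oplus -A'_{\varphi^\perp}$) only has components with $j-i$ odd, the surviving blocks are those with $j-i$ \emph{positive} and odd. Thus $\A$ \emph{raises} the leg index, not lowers it: $\A(\psi_i)\subset\bigoplus_{j>i,\ j-i\text{ odd}}\psi_j$. Your statement ``$\A(\psi_i)\subset\psi_{i-1}\oplus(\text{lower legs})$'' and later ``$\A$ lowers the leg index by exactly one'' are both wrong. Your filtration conclusion $\A(Z_j)\subset Z_{j+1}$ is nevertheless correct, because with your convention $Z_{j+1}=\psi_{j-s+1}\oplus\cdots\oplus\psi_s$ consists of the \emph{higher}-indexed legs. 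Note also that the step is not ``exactly one''---it can be any positive odd integer---and the argument does not need it to be.

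For the bound $s(\varphi)\le s$, you are overcomplicating matters and aiming at the wrong target. By definition, $s(\varphi)\le s$ means precisely that $(\A)^{2s}\bigl((-1)^{s-1}\varphi\bigr)=0$ as a subbundle, not that some pairing vanishes. Once you know $\A$ raises the index by a positive odd amount, $(\A)^2$ raises it by at least $2$, so $(\A)^{2s}$ raises it by at least $2s$. Since $(-1)^{s-1}\varphi=\psi_{-s+1}\oplus\psi_{-s+3}\oplus\cdots\oplus\psi_{s-1}$, applying $(\A)^{2s}$ lands every summand in $\bigoplus_{j\ge s+1}\psi_j=0$. That is the entire argument; no pairing, no ``exactly one'', no parity chase beyond identifying which legs make up $(-1)^{s-1}\varphi$. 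The paper's proof is three sentences for this reason.
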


\begin{proof}
That $\varphi$ is harmonic follows from Proposition \ref{pr:pi-twistor}.
Since $n-k = 2\sum_{i \in \nn,\ i \text{ odd}} d_i$\,, it is even.
Since $(-1)^{s-1}\varphi = \sum_{i=0}^{s-1}\psi_{-s+1+2i}$ and $(\A)^2$ maps
$\psi_{-s+1+2i}$ to $\sum_{j > i} \psi_{-s+1+2j}$, we see that 
$(\A)^{2s}\bigl((-1)^{s-1}\varphi\bigr) = 0$ so that $\varphi$ is nilconformal
with $s(\varphi) \leq s$.
\end{proof}

The inequality $s(\varphi) \leq s$ cannot be replaced by equality, see
Remark \ref{re:two-lifts}.

\subsection{Twistor spaces of $G_2/\SO{4}$ as flags} \label{subsec:twistor-excep} 

We saw in \S \ref{subsec:octonians} that $G_2$ has three flag manifolds which fibre over $G_2/\SO{4}$; these give three twistor spaces for $G_2/\SO{4}$.
To study these twistor spaces geometrically, we embed
$G_2/\SO{4}$ in the oriented Grassmannian $\wt G_3(\rn^7)$. The twistor spaces of this Grassmannian are flag manifolds $F = F_{d_s\ldots,d_0}^{\rn}$ as described in the last subsection.   Since $d_0 + 2(d_1 +\cdots + d_s) = 7$ and 
$\sum_{i>0, i \text{ odd}}d_i = 2$ there are precisely three possibilities which we will denote by $F_1$, $F_2$, $F_3$:
\[
{\rm (i)} \ s=1 \text{ and } F_1 = F_{2,3}^{\rn}\,; \quad
{\rm (ii)} \ s=2 \text{ and } F_2 = F_{1,2,1}^{\rn}\,; \quad
{\rm (iii)} \ s=3 \text{ and } F_3 = F_{1,1,1,1}^{\rn}\,.
\]

\begin{definition} \label{def:G2-flag} Let
$\psi = (\psi_{-s},\ldots,\psi_s) \in F_s$ \ $(s=1,2$ or $3$).
We shall say that $\psi$ is a \emph{$G_2$-flag} if
\be{G2-flag}
\psi_i \times \psi_j \subset \psi_{i+j} \quad \forall i,j \in \zn.
\ee
Here, we set $\psi_i = 0$ for $|i| > s$.  
We denote the set of all $G_2$-flags in $F_s$ by $\T_s$\,.
\end{definition}

Recall from \S \ref{subsec:repn} that the annihilator
$\l^a = \{X \in \I\O : X \times \l = 0\}$ of a $1$-dimensional
isotropic subspace $\l$ of $\I\O$ is isotropic,
of dimension three, and contains $\l$.
The following should be compared with \cite[\S 5.2]{kobak}.

\begin{proposition}\label{pr:Z-char} Let $\psi = (\psi_{-s},\ldots,\psi_s) \in F_s$ for some $s \in \{1,2,3\}$.
The following are equivalent$:$

{\rm (i)} $\psi$ is a $G_2$-flag (Definition \ref{def:G2-flag}),
i.e., $\psi \in \T_s;$

{\rm (ii)} the $\psi_i$ are the legs of a filtration$:$
\be{F-filts}
\left. \begin{array}{rl}
(s=1) \quad &\cn^7 \supset \ov W^{\perp} \supset W \supset 0\,,  \\
(s=2) \quad &\cn^7 \supset \ov \l^{\perp} \supset \ov{(\l^a)}^\perp \supset \l^a \supset \l \supset 0 \,, \\
(s=3) \quad &\cn^7 \supset \ov \l^{\perp} \supset \ov D^{\perp} \supset
\ov{(\l^a)}^\perp \supset \l^a \supset D \supset \l \supset 0\,,
\end{array} \right\}
\ee
where $W$ is a $2$-dimensional complex-coassociative subspace of\/ $\cn^7$, $\l$ is a
$1$-dimensional isotropic subspace of\/ $\cn^7$, and
$D$ is a $2$-dimensional subspace of\/ $\l^a$ which contains $\l;$

{\rm (iii)} $\psi$ is $G_2$-equivalent to the flag$:$
\be{std-flag1}
\left. \begin{array}{rl}
(s=1) \ &(\l_{-2\alpha_1-\alpha_2} \oplus \l_{-\alpha_1 - \alpha_2}\,,\,
 \l_{-\alpha_1} \oplus \l_0 \oplus \l_{\alpha_1}\,,\,
	\l_{\alpha_1+\alpha_2} \oplus \l_{2\alpha_1 + \alpha_2})\,,
\\
(s=2) \ &(\l_{-\alpha_1} \,,\, \l_{\alpha_1+\alpha_2} \oplus \l_{-2\alpha_1-\alpha_2}\,,\, \l_0 \,,\,
	\l_{-\alpha_1-\alpha_2} \oplus \l_{2\alpha_1+\alpha_2}\,,\, \l_{\alpha_1})\,,
	\\
(s=3) \ &(\l_{-2\alpha_1-\alpha_2} \,,\, \l_{-\alpha_1} \,,\,
\l_{-\alpha_1-\alpha_2}\,,\, \l_0 \,,\,
	\l_{\alpha_1+\alpha_2} \,,\, \l_{\alpha_1}\,,\, \l_{2\alpha_1 + \alpha_2})\,.
\end{array} \right\}
\ee
\end{proposition}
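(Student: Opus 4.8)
The plan is to prove the three-way equivalence by going around the cycle $(\mathrm{iii}) \Rightarrow (\mathrm{ii}) \Rightarrow (\mathrm{i}) \Rightarrow (\mathrm{iii})$, exploiting the weight-space description of the $G_2$-action on $\I\O \otimes \cn$ from \S\ref{subsec:repn} as the computational backbone. The key structural input is the multiplicativity rule \eqref{weight-mult}, $\l_\lambda \times \l_\eta \subset \l_{\lambda+\eta}$, together with the fact that the seven weights are $0$ and $\pm\alpha_1$, $\pm(\alpha_1+\alpha_2)$, $\pm(2\alpha_1+\alpha_2)$, and that $\ov{\l_\lambda} = \l_{-\lambda}$; note that $\alpha_1 + (\alpha_1+\alpha_2) = 2\alpha_1+\alpha_2$ is the only nontrivial ``additive coincidence'' among the positive weights, which is precisely what makes the combinatorics of \eqref{G2-flag} work out to the three listed filtrations.

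First I would verify $(\mathrm{iii}) \Rightarrow (\mathrm{i})$: the standard flags \eqref{std-flag} are $G_2$-flags. This is a direct check using \eqref{weight-mult} — for each $s$, one lists the legs as sums of weight spaces and checks that $\psi_i \times \psi_j$, computed weight space by weight space, lands in $\psi_{i+j}$ (with $\psi_i = 0$ for $|i|>s$); the vanishing cases occur exactly when $\lambda+\eta$ is not a weight or when the index sum exceeds $s$. Since $G_2$ preserves the vector product, property \eqref{G2-flag} is $G_2$-invariant, so every flag $G_2$-equivalent to a standard one is a $G_2$-flag; this gives $(\mathrm{iii}) \Rightarrow (\mathrm{i})$. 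Next, $(\mathrm{i}) \Rightarrow (\mathrm{ii})$: given a $G_2$-flag $\psi$, I would read off from \eqref{G2-flag} the relations defining the filtration. For $s=1$, writing $W = \psi_1$, the condition $\psi_1 \times \psi_1 \subset \psi_2 = 0$ says $W$ is complex-coassociative, while $\psi_{-1} = \ov W$ and the reality of $\psi_0$ give $\psi_0 = (W \oplus \ov W)^\perp = \ov W^\perp \cap W^\perp$; one then checks the associated flag is $\cn^7 \supset \ov W^\perp \supset W \supset 0$. For $s=2,3$, set $\l = \psi_s$ (a line, isotropic since $\l \times \l \subset \psi_{2s} = 0$ forces isotropy via the annihilator being isotropic, as recalled before the proposition); the conditions $\psi_i \times \l = \psi_i \times \psi_s \subset \psi_{i+s} = 0$ for $i < 0$ show the lower legs lie in $\l^a$, and dimension-counting (using $\dim \l^a = 3$, $\dim \ov\l^\perp = 6$, and the reality $\psi_{-i} = \ov{\psi_i}$) identifies the intermediate terms of the associated flag with $\l^a$, $\ov{(\l^a)}^\perp$, $\ov\l^\perp$, and in the $s=3$ case the additional rank-$2$ term $D = \psi_{-1}^\perp \cap \l^a$-type subspace containing $\l$. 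I would present this as: the legs force the flag to have the shape \eqref{F-filts}, and conversely any such $W$, $\l$, $D$ produce legs satisfying \eqref{G2-flag} — which simultaneously handles the easy direction $(\mathrm{ii}) \Rightarrow (\mathrm{i})$.

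Finally, $(\mathrm{ii}) \Rightarrow (\mathrm{iii})$, the normal-form statement, is where the transitivity results do the work. For $s=1$: a complex-coassociative $2$-plane $W$ contains an isotropic line $\beta$, and by Lemma~\ref{le:G2-trans} ($G_2$ transitive on pairs of isotropic orthogonal unit vectors with zero product) together with Lemma~\ref{le:expl-W}, $W$ is $G_2$-conjugate to $\l_{\alpha_1+\alpha_2} \oplus \l_{2\alpha_1+\alpha_2}$; since the rest of the flag \eqref{F-filts} is determined by $W$, the whole flag is conjugate to the standard one. For $s=2$: transitivity of $G_2$ on isotropic unit vectors (recalled after Lemma~\ref{le:assoc}) sends $\l$ to $\l_{\alpha_1}$, forcing $\l^a = \l_{-\alpha_1-\alpha_2} \oplus \l_0 \oplus \l_{\alpha_1}$ — wait, one must be careful with which annihilator; I would instead normalize so $\l = \psi_2 \mapsto \l_{\alpha_1}$ and check the middle real leg $\psi_0$ becomes $\l_0$ and the rank-$2$ legs split as in \eqref{std-flag}, using that the remaining freedom is a torus that permutes the weight-space decomposition trivially. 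For $s=3$: normalize $\l \mapsto \l_{2\alpha_1+\alpha_2}$ (the highest weight), then $D \subset \l^a = \l_0 \oplus \l_{\alpha_1} \oplus \l_{2\alpha_1+\alpha_2}$ is a $2$-plane through $\l$; the residual stabilizer, a maximal torus of $G_2$ (dimension $2$), acts on the pencil of such $2$-planes and I would check it has a single generic orbit hitting the standard choice, so after a further conjugation $D = \l_{\alpha_1} \oplus \l_{2\alpha_1+\alpha_2}$, and then all seven legs are the claimed weight spaces.

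\textbf{Main obstacle.} The delicate point is the $s=3$ normalization in $(\mathrm{ii}) \Rightarrow (\mathrm{iii})$: after fixing $\l$, one must show the maximal torus of $G_2$ acts transitively on the relevant family of intermediate $2$-planes $D$ (equivalently, that $\T_3$ really is a single $G_2$-orbit and not a union of several). Unlike the $s=1,2$ cases, where Lemmas~\ref{le:G2-trans} and \ref{le:expl-W} hand us transitivity directly, here I expect to need an explicit weight-vector computation showing that the torus acts with an open dense orbit on the pencil and that the orbit closure forces the standard form (or, alternatively, a dimension count: $\dim G_2 - \dim(\U1 \times \U1) = 14 - 2 = 12 = \dim F_3$ since $F_3 = G_2/(\U1\times\U1)$, combined with connectedness of $\T_3$, which would pin it down as the whole orbit). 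I would also need to confirm that the flag \eqref{F-filts} for $s=3$ genuinely has all legs nonzero and distinct under the $G_2$-flag condition — i.e.\ that $D \ne \l^a$ and $D \ne \l \oplus (\text{anything already listed})$ — so that it sits in $F_3 = F^{\rn}_{1,1,1,1}$ and not a smaller flag manifold.
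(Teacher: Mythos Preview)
Your cycle of implications and the tools you invoke---weight-space multiplication \eqref{weight-mult} for $(\mathrm{iii})\Rightarrow(\mathrm{i})$, reading off $W$, $\ell$, $D$ from the legs for $(\mathrm{i})\Rightarrow(\mathrm{ii})$, and $G_2$-transitivity for $(\mathrm{ii})\Rightarrow(\mathrm{iii})$---are exactly the paper's. The paper's proof is essentially a terse version of what you outline.

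Where you diverge is in the $s=3$ normalization, which you flag as the ``Main obstacle''; in fact it dissolves via the same Lemma~\ref{le:G2-trans} you already invoke for $s=1$. A pair $(\ell,D)$ with $\ell \subset D \subset \ell^a$ is equivalent to a pair of unit vectors $(u,v)$ with $u$ spanning $\ell$ and $v$ spanning $D \ominus \ell$; since $\ell^a$ is isotropic, $u$ and $v$ are isotropic with $u \times v = 0$ and are orthogonal, so Lemma~\ref{le:G2-trans} carries $(u,v)$ to the standard pair in one step---no residual torus argument or dimension count is needed. Your proposed workaround also contains two errors worth noting: the stabilizer of an isotropic line in $G_2$ is $\U{2}_-$ (recall $Q^5 = G_2/\U{2}_-$), not a maximal torus; and $(\l_{2\alpha_1+\alpha_2})^a = \l_{\alpha_1} \oplus \l_{\alpha_1+\alpha_2} \oplus \l_{2\alpha_1+\alpha_2}$, not the span containing $\l_0$ that you wrote (since $\l_0 \times \l_{2\alpha_1+\alpha_2} \subset \l_{2\alpha_1+\alpha_2}$ is nonzero). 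The analogous slip in your $s=2$ annihilator, which you half-caught, is $(\l_{\alpha_1})^a = \l_{\alpha_1} \oplus \l_{2\alpha_1+\alpha_2} \oplus \l_{-\alpha_1-\alpha_2}$.
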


\begin{proof}
We show that (i) $\implies$ (ii) $\implies$ (iii) $\implies$ (i).

First, suppose that (i) holds.  When $s=1$ put $W = \psi_1$,
when $s=2$ put $\l=\psi_2$, when $s=3$ put $\l = \psi_3$ and $D = \psi_2 \oplus \psi_3$; then we get filtrations as in \eqref{F-filts}, so that (ii) holds.

Next, suppose that (ii) holds.  Then by Lemma \ref{le:G2-trans},
we can find an element of $G_2$
which, when $s=1$, maps  $W$ to the complex-coassociative subspace
$\l_{\alpha_1+\alpha_2} \oplus \l_{2\alpha_1 + \alpha_2}$;
when $s=2$, maps $\l$ to the isotropic line $\l_{\alpha_1}$; and
when $s=3$, maps $\l$ and $D$ to $\l_{2\alpha_1 + \alpha_2}$ and
$\l_{\alpha_1} \oplus  \l_{2\alpha_1 + \alpha_2}$, respectively.
The rest of the legs in \eqref{std-flag1} then follow by calculation.  For example,
when $s=3$, ${(\l_{2\alpha_1+\alpha_2})}^a$ is spanned by
$\l_{\alpha_1}$, $\l_{\alpha_1+\alpha_2}$ and $\l_{2\alpha_1 + \alpha_2}$, determining the legs $\psi_i$ for $i$ positive;
the other legs follow by the reality condition \eqref{reality}.
Hence (iii) holds.

Lastly, suppose that (iii) holds.   Then simple calculations
using \eqref{weight-mult} show that the flags \eqref{std-flag1} satisfy
\eqref{G2-flag}, so that (i) holds.
\end{proof}

Now note that the projections $\pi_0:F_s \to \wt G_3(\rn^7)$ which combine even-numbered legs restrict to projections $\pi_0:\T_s \to G_2/\SO{4}$.

\begin{remark}  We have chosen flags in \eqref{std-flag1} which all project to
$\varphi = \l_{-\alpha_1} \oplus \l_0 \oplus \l_{\alpha_1}$ and so give a model for the commutative diagram \eqref{diag:twistor-spaces}; if we drop this requirement, we can apply elements of the Weyl group to produce alternative flags (cf.\ \cite{kobak}) which are $G_2$-equivalent to those of \eqref{std-flag1}.  In particular, the following flags will be useful to us in \S \ref{subsec:twistor-fibr2}; the roots appearing in the same order as \eqref{std-flag1}($s$=1).
\be{std-flag2}
\left. \begin{array}{rl}
(s=2) \ &(\l_{-2\alpha_1-\alpha_2} \,,\, \l_{-\alpha_1-\alpha_2} \oplus \l_{-\alpha_1}\,,\, \l_0 \,,\,
	\l_{\alpha_1} \oplus \l_{\alpha_1+\alpha_2}\,,\, \l_{2\alpha_1+\alpha_2})\,,
	\\
(s=3) \ &(\l_{-2\alpha_1-\alpha_2} \,,\, \l_{-\alpha_1-\alpha_2} \,,\,
\l_{-\alpha_1}\,,\, \l_0 \,,\,
	\l_{\alpha_1} \,,\, \l_{\alpha_1+\alpha_2}\,,\, \l_{2\alpha_1 + \alpha_2})\,.
\end{array} \right\}
\ee
\end{remark}

We now show that the $G_2$-fibre bundles $\pi_0:\T_s \to G_2/\SO{4}$ are isomorphic to the three twistor bundles $T_s \to G_2/\SO{4}$ defined in \S \ref{subsec:octonians}.   To do this, we need the following result
which will be proved by Lie theory in \S \ref{subsec:twistor-fibr2}.  

\begin{lemma} \label{le:preserve-J2}
The following maps intertwine both $J_1$ and $J_2:$
\be{inclusions}
\left. \begin{array}{rcl}
 T_1 &=& G_2/\U{2}_+ \to F_1 \quad \text{given by $W \mapsto \psi  =$ the legs of
\eqref{F-filts}\emph{($s$=1);}}\\
 T_2 &=& Q^5 \to F_2 \quad \text{given by $\l \mapsto \psi = $ the legs of \eqref{F-filts}\emph{($s$=2);}}\\
T_3 &=& G_2/(\U{1} \times \U{1}) \to F_3 \quad \text{given by $(\l,D) \mapsto \psi$}\\
    & & \qquad \qquad \qquad \qquad\qquad \qquad \text{$= $ the legs of \eqref{F-filts}\emph{($s$=3)}}.
\end{array} \right\}
\ee
\end{lemma}

In fact, these maps are injective with image $\T_s$\,, as shown by the following result.

\begin{proposition} \label{pr:T-and-T}
The almost complex structures $J_1$ and $J_2$ on $F_s$ restrict to ones on $\T_s$ which we shall also denote by $J_1$ and $J_2$.  
The maps \eqref{inclusions} define $G_2$-equivariant isomorphisms of fibre bundles $T_s \to \T_s$
over $G_2/\SO{4}$ which intertwine both $J_1$ and $J_2$.
\end{proposition}

\begin{proof}
By definition, the image of each map is contained in $\T_s$.
It is easy to check that the following are the inverse mappings: 

$\T_1 \to T_1 = G_2/\U{2}_+$ given by $\psi \mapsto W$ where $W = \psi_1\,;$

$\T_2 \to T_2 = Q^5$ given by $\psi \mapsto \l$ where $\l = \psi_2\,;$ 

$\T_3 \to T_3 = G_2/(\U{1} \times \U{1})$ given by $\psi \mapsto$ the pair $(\l, D)$ where $\l = \psi_2$ and $D= \psi_2 \oplus \psi_3$.

Further, the maps $T_s \to \T_s$ intertwine the projections $\pi_s:T_s \to G_2/\SO{4}$
of \eqref{diag:twistor-spaces}
and $\pi_0:\T_s \to G_2/\SO{4}$ defined above, and are $G_2$-equivariant, so are isomorphisms of fibre bundles.
We may transfer the almost complex structures $J_1$ and $J_2$ from $T_s$ to $\T_s$ by the isomorphisms;  by Lemma \ref{le:preserve-J2}, these agree with those obtained by restriction from $F_s$. 
\end{proof}

Note also that we have a $G_2$-equivariant isomorphism of fibre bundles: $G_2/\U{2}_- \to \T_2$ 
defined by $W \mapsto \psi = $ the legs of \eqref{F-filts}($s$=2) with $\l = W \times W$; this has inverse $\psi \mapsto W = \psi_1$.  This commutes with the isomorphism of $Q^5$ to $G_2/\U{2}_-$ given in \S \ref{subsec:twistor-sp1}.

\subsection{Uniqueness of the twistor lifts}
Using the identifications of our twistor spaces with the spaces of $G_2$-flags
$\T_s$ as explained in the previous section, we can establish the uniqueness part of Theorem \ref{th:lifts}.  

\begin{lemma} \label{le:uniqueness}
Let $\varphi:M \to G_2/\SO{4}$ be a nilconformal harmonic map and set $s = s(\varphi)$.
Then any two $J_2$-holomorphic lifts into $T_s$ are equal.
\end{lemma}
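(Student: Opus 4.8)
The plan is to reduce the uniqueness statement to a uniqueness statement about strict alternating real $\A$-filtrations, via the identification of $T_s$ with the space $\T_s$ of $G_2$-flags (Proposition \ref{pr:T-and-T}) and the fact that this identification preserves $J_2$ (Proposition \ref{pr:preserve-J2}). Concretely, by Proposition \ref{pr:real-Grass} applied in reverse, a $J_2$-holomorphic lift $\psi:M \to \T_s$ of $\varphi$ corresponds to a real strict alternating $\A$-filtration of length $2s$ whose legs satisfy the $G_2$-condition \eqref{G2-flag}; so it suffices to show such a filtration is unique and that it can only occur for $s = s(\varphi)$. The integer $s$ is forced because Proposition \ref{pr:real-Grass} gives $s(\varphi) \le s$ for any lift into $F_s$, while the explicit construction in the lemmas of Section 3 shows that the last non-zero leg $(\A)^{s-1}(W)$ is non-zero; more precisely, the filtration by $\A$-images shows that any lift into $\T_s$ has $(\A)^{s-1}\varphi \ne 0$ but $(\A)^{s}\varphi$-type terms vanish, which pins $s$ down to $s(\varphi)$.

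The heart of the argument is to show that the filtration — equivalently the subbundle $W$ used to build it — is uniquely determined by $\varphi$. First I would observe that for any $J_2$-holomorphic lift, reading off the filtration \eqref{F-filts} shows that the top leg is determined: when $s=1$, $\psi_1 = W$ must be a holomorphic maximally isotropic subbundle of $\varphi^{\perp}$ containing $\beta := \A(\varphi)$ and must be complex-coassociative (positive); when $s=2$, $\psi_2 = \ell = \A(W)$ with $W$ negative; when $s=3$, $\psi_2 \oplus \psi_3 = D$ and $\psi_3 = \ell = (\A)^2(W)$ with $W$ positive and satisfying \eqref{W-cond}. In each case I would show that the relevant $W$ is forced: the leg structure of the filtration together with the condition \eqref{last-leg} forces $\beta \subset W \subset \ov\beta^{\perp}$ with $\beta = (\A)^s\bigl((-1)^{s-1}\varphi\bigr)$; when $\rank\beta = 2$ this already gives $W = \beta$, and when $\rank\beta = 1$ one appeals to Lemma \ref{le:expl-W}(a), which says there is a \emph{unique} positive (resp.\ negative) maximally isotropic extension of $\beta$. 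The delicate case is $s = 3$ with $\rank\beta = 1$, where a priori \eqref{W-cond} might allow more than one choice of the second line $\gamma$ in $\ov\beta^{\perp}/\beta$; here I would argue, as in the proof of Lemma \ref{le:s3}, that the subcase $B \ne 0$ forces $\rank\beta = 2$ (contradiction), and in the subcase $B = 0$ only the \emph{positive} extension is allowed (to land in $T_3$ with the correct parity), and by Lemma \ref{le:expl-W}(a)(i) that is unique, given by $W = \beta^a \cap \varphi^{\perp}$.

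Having shown $W$ (hence $\psi_s$, $\psi_{s-1}$, etc.) is determined, the remaining legs $\psi_i = (\A)^{s-i}(W)$ for $i = s-1, \ldots$ down to the middle, together with their conjugates, are then determined by $W$ and $\A$ alone, so the whole flag $\psi$ is determined; this is exactly the content that the map $W \mapsto \psi$ in Proposition \ref{pr:T-and-T} is injective and that our constructed lift realizes it. I expect the main obstacle to be the bookkeeping in the $s=3$, $\rank\beta=1$ case: one must rule out the possibility of two distinct $J_2$-holomorphic lifts arising from the two isotropic lines in $\ov\beta^{\perp}/\beta$ by showing that only one of them has the correct sign/complex-coassociativity to define a point of $T_3$ (as opposed to merely a real flag in $\wt G_3(\rn^7)$), and this is where the $G_2$-condition \eqref{G2-flag} — equivalently complex-coassociativity of $W$ — does the essential work, via the explicit formulae \eqref{std-flag}($s$=3) and Lemma \ref{le:coassoc-neg}.
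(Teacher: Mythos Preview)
Your approach for $s=1$ and $s=3$ is essentially the paper's: show that for any $J_2$-holomorphic lift $\psi$ into $\T_s$, the positive maximally isotropic subbundle $W$ of $\varphi^{\perp}$ (namely $\psi_1$ when $s=1$, $\psi_1\oplus\psi_3$ when $s=3$) must contain $\beta$, hence is forced by Lemma~\ref{le:expl-W}; the rest of the flag is then recovered from $W$ via \eqref{F-filts}. Two minor corrections: your formula $\psi_i=(\A)^{s-i}(W)$ for the legs is wrong (e.g.\ for $s=3$, $\psi_1$ has rank~1 but $W$ has rank~2); and the statement you aim for, ``it can only occur for $s=s(\varphi)$'', is not what the lemma asserts and is in fact false --- see Remark~\ref{re:two-lifts}. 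The lemma only claims uniqueness \emph{within} $T_{s(\varphi)}$.

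There is, however, a genuine gap in your $s=2$ argument. You want to pin down $W=\psi_1$ as the unique \emph{negative} extension of $\beta=(\A)^2(\varphi^{\perp})$, and this works when $\rank\beta=1$. But $\rank\beta=0$ is perfectly compatible with $s(\varphi)=2$ (it only forces $(\A)^3(\varphi)=0$, not $(\A)^2(\varphi)=0$), and in that case $\beta$ imposes no constraint whatsoever on $\psi_1$; your argument gives no uniqueness. The paper sidesteps this by characterising $\ell=\psi_2$ directly rather than $W=\psi_1$: one first observes $A'_{\psi_{-2},\psi_{-1}}\neq 0$ (else $s(\varphi)=1$), and then checks that $\ker A'_{\varphi}$ is either $\psi_2$ or $\psi_0\oplus\psi_2$, so in either case $\psi_2$ is the \emph{unique isotropic line} in $\ker A'_{\varphi}$ (the middle leg $\psi_0$ being real). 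This determines $\ell$, hence the whole flag via \eqref{F-filts}($s$=2), without any reference to $\beta$.
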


Note, however, that Remark \ref{re:two-lifts} shows that a `sufficiently simple'
nilconformal harmonic map may also have a lift into a twistor space $T_s$ with
$s > s(\varphi)$ (which might not be unique).

\begin{proof}
$(s=1)$ \ Let $\psi:M \to T_1 = G_2/\U{2}_+$ be a $J_2$-holomorphic lift of $\varphi$.  Then $\psi_1 = W =$ the unique
 complex-coassociative subbundle of $\varphi^{\perp}$ containing
$G'(\varphi)$.

$(s=2)$ \ Let $\psi:M \to T_2 = Q_5 \cong \T_2$ be a
$J_2$-holomorphic lift of $\varphi$.

Note that $A'_{\psi_{-2},\psi_{-1}} \neq 0$ otherwise $s(\varphi) = 1$.
Further, either $A'_{\psi_0} = 0$ in which case
$\ker A'_{\varphi} = \psi_0 + \psi_2$, or
$A'_{\psi_0} \neq 0$ in which case $\ker A'_{\varphi} = \psi_2$. 
In both cases $\psi_2 = \l$ is characterized as the unique
isotropic line subbundle in $\ker A'_{\varphi}$.

$(s=3)$ \ Let $\psi:M \to T_3$ be a $J_2$-holomorphic lift of $\varphi$.
Set $\beta = (\A)^3(\varphi)$.  Then $\beta \neq 0$, otherwise we would have
$s(\varphi) \leq 2$.  Further, $\beta \subset \psi_1 \oplus \psi_3$.
Hence $\psi_1 \oplus \psi_3$ is the unique complex-coassociative
subbundle of $\varphi^{\perp}$ containing
$\beta$. Next,  $(\A)^2(\psi_1 \oplus \psi_3)$ is contained in $\psi_3$,
and is is zero if and only if $A'_{\psi_2,\psi_3} \circ A'_{\psi_1,\psi_2} =0$. Since this implies that $s(\varphi) \leq 2$, $(\A)^2(\psi_1 \oplus \psi_3)$ is non-zero and so equals $\psi_3 = \l$; this also fixes $\psi_1 = (\psi_1 \oplus \psi_3) \ominus \psi_3$.  Lastly
$D = \psi_2 \oplus \psi_3$ is characterized by $D = \l^a \ominus \psi_1$.
\end{proof} 

\subsection{Inclusive maps} \label{subsec:inclusive} 
Recall \cite{eells-salamon} that a weakly conformal map from a Riemann surface to a quaternionic K\"ahler manifold is called \emph{inclusive} if the image of each tangent space is contained in a $4$-dimensional quaternionic subspace.
It follows from \cite[Proposition 7.3]{J2} that \emph{$\varphi:M \to G_2/\SO{4}$ is
inclusive if and only if it is strongly conformal}; in that case, our twistor lift $W: M \to T_1$ is the $J_2$-holomorphic lift constructed by J.\ Eells and S.~M.\ Salamon \cite{eells-salamon}.

\subsection{Superhorizontal maps}  \label{subsec:superhor}
The concept of superhorizontal maps into a flag manifold is defined in \cite{burstall-rawnsley}. For maps into a geometric flag manifold it amounts to the following:
\begin{definition} \label{def:superhor} \cite{burstall-rawnsley}
A smooth map $\psi=(\psi_i)$ into a complex flag manifold is called
\emph{superhorizontal holomorphic} if the second
fundamental forms $A'_{\psi_i,\psi_j}$ are zero for all $i,j$ with $j \neq i+1$.
Equivalently, for each $i = 0,1,\ldots$,
$\delta_i = \sum_{j=0}^i \psi_j$ is a holomorphic subbundle of
$(\CC^n, \pa_{\zbar})$ and $\pa_z$ maps sections of $\delta_i$
into sections of $\delta_{i+1}$\,.
\end{definition}

By \eqref{holo-tgt-space} and \eqref{hor-vert}, we see that a superhorizontal holomorphic map is
both $J_1$-holomorphic and $J_2$-holomorphic with respect to $\pi_e$, and so horizontal.
The twistor projection of a superhorizontal holomorphic map is a special type of harmonic
map, see \S \ref{subsec:superhor2}, also \cite[\S 4C]{burstall-rawnsley}, for more information, In the case of maps into a sphere or projective space, the twistor projection of a superhorizontal holomorphic map is called
\emph{superminimal}, see \S 5.2ff.

The superhorizontal holomorphic maps $\psi:M \to \T_s$ are given by the legs of the filtration
\eqref{F-filts}, with $\ov\l$ and $\ov D$ now being holomorphic subbundles of $(\CC^7,\pa_{\zbar})$.

\section{Relationship with almost complex maps to the $6$-sphere}
\label{sec:almost-complex}

\subsection{The almost complex structure on $S^6$}
Let $S^6$ be the unit sphere in $\I\O \cong \rn^7$.   Define an almost complex structure
$J$ on $S^6$ by $J_F v = F \times v$ \ $(F \in S^6, \ 
v \in T_F S^6 = \spa\{F\}^\perp)$.  The following formulae are a consequence of \eqref{vp-scalar}
and \eqref{vp-assoc}.

\begin{lemma} \label{le:alg-S6} Let $F \in S^6$ and let $\alpha,\beta\in T_F^{1,0}S^6$. Then 
\begin{enumerate}
\item[(i)] $\alpha\times\ov\beta=\ii\,(\alpha,\ov \beta)F;$
\item[(ii)] $\alpha \times \beta \in T_F^{0,1}S^6;$
\item[(iii)] $|\alpha\times\beta|^2
=2\bigl(|\alpha|^2|\beta|^2-|(\alpha,\ov\beta)|^2\bigr)$. \qed
\end{enumerate}
\qed \end{lemma}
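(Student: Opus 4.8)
The plan is to work entirely within the $2$-dimensional complex vector space $T_F^{1,0}S^6$, which we can realize concretely: choosing a unit vector $F\in S^6$ and an orthonormal triple $\{F,e_1,e_2,e_3,e_4,\ldots\}$ adapted so that $J_F e_1 = e_2$, $J_F e_3 = e_4$, we have $T_F^{1,0}S^6 = \spa\{v_1,v_2\}$ where $v_1 = e_1 - \ii e_2$ and $v_2 = e_3 - \ii e_4$. Then $\alpha = a v_1 + b v_2$ and $\beta = c v_1 + d v_2$ for complex scalars $a,b,c,d$, and everything reduces to computing the three vector products $v_1\times \ov v_1$, $v_1 \times \ov v_2$, $v_1 \times v_2$ (and the ones obtained by symmetry) using only the definition $J_F v = F\times v$ together with the identities \eqref{vp-scalar} and \eqref{vp-assoc}.

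For (i): since $v_k \in T_F^{1,0}S^6$ means $F\times v_k = \ii v_k$, and similarly $F \times \ov v_k = -\ii \ov v_k$, I would apply \eqref{vp-assoc} with $u = \alpha$, $v = F$, $w = \ov\beta$ to get $\alpha \times (F\times \ov\beta) + (\alpha \times F)\times \ov\beta = 2(\alpha,\ov\beta)F - (\alpha,F)\ov\beta - (F,\ov\beta)\alpha$. Using $F\times \ov\beta = -\ii\ov\beta$ and $\alpha\times F = -\ii \alpha$ on the left, and $(\alpha,F) = (F,\ov\beta) = 0$ (both $\alpha$ and $\ov\beta$ lie in $\spa\{F\}^\perp\otimes\cn$) on the right, the left side becomes $-2\ii\,(\alpha\times\ov\beta)$, giving $\alpha\times\ov\beta = \ii(\alpha,\ov\beta)F$. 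For (ii): apply \eqref{vp-assoc} similarly with $v = F$ but now $w = \beta \in T_F^{1,0}S^6$, so $F\times\beta = \ii\beta$; one obtains $\alpha\times\beta$ is a multiple of... more directly, compute $F\times(\alpha\times\beta)$ via \eqref{vp-assoc} applied to $u=F$, $v=\alpha$, $w=\beta$: $F\times(\alpha\times\beta) + (F\times\alpha)\times\beta = 2(F,\beta)\alpha - (F,\alpha)\beta - (\alpha,\beta)F$; the scalar products $(F,\alpha),(F,\beta)$ vanish, $(\alpha,\beta) = 0$ since $\alpha,\beta$ are both in the isotropic space $T_F^{1,0}S^6$, and $(F\times\alpha)\times\beta = \ii\alpha\times\beta$, so $F\times(\alpha\times\beta) = -\ii(\alpha\times\beta)$, which says $\alpha\times\beta \in T_F^{0,1}S^6$.

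For (iii): I would compute $|\alpha\times\beta|^2 = (\alpha\times\beta, \ov{\alpha\times\beta}) = (\alpha\times\beta, \ov\alpha\times\ov\beta)$. Using \eqref{vp-scalar} twice (moving factors across the product), this equals $(\alpha \times(\ov\alpha\times\ov\beta)\times\beta,\ \cdot\ )$-type manipulation; more cleanly, write $(\alpha\times\beta,\ov\alpha\times\ov\beta) = (\beta, (\ov{\alpha\times\beta})\times\alpha)$ wait — better to use $(u\times v, w) = (u, v\times w)$ directly: $(\alpha\times\beta, \ov\alpha\times\ov\beta) = (\alpha, \beta\times(\ov\alpha\times\ov\beta))$, then expand $\beta\times(\ov\alpha\times\ov\beta)$ with \eqref{vp-assoc}: $\beta\times(\ov\alpha\times\ov\beta) = -(\beta\times\ov\alpha)\times\ov\beta + 2(\beta,\ov\beta)\ov\alpha - (\beta,\ov\alpha)\ov\beta - (\ov\alpha,\ov\beta)\beta$. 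Now $(\ov\alpha,\ov\beta) = \ov{(\alpha,\beta)} = 0$, and by part (i) $\beta\times\ov\alpha = \ii(\beta,\ov\alpha)F$ so $(\beta\times\ov\alpha)\times\ov\beta = \ii(\beta,\ov\alpha)(F\times\ov\beta) = \ii(\beta,\ov\alpha)(-\ii\ov\beta) = (\beta,\ov\alpha)\ov\beta$; also $(\beta,\ov\beta) = |\beta|^2$. Substituting, $\beta\times(\ov\alpha\times\ov\beta) = -(\beta,\ov\alpha)\ov\beta + 2|\beta|^2\ov\alpha - (\beta,\ov\alpha)\ov\beta = 2|\beta|^2\ov\alpha - 2(\beta,\ov\alpha)\ov\beta$. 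Pairing with $\alpha$: $|\alpha\times\beta|^2 = 2|\beta|^2(\alpha,\ov\alpha) - 2(\beta,\ov\alpha)(\alpha,\ov\beta) = 2|\alpha|^2|\beta|^2 - 2|(\alpha,\ov\beta)|^2$, as claimed. I do not expect any genuine obstacle here — the whole lemma is a bookkeeping exercise in \eqref{vp-scalar} and \eqref{vp-assoc}; the only point requiring a little care is tracking which pairs of vectors are mutually orthogonal (all of $F,\alpha,\beta,\ov\alpha,\ov\beta$ except the conjugate pairs) and which are isotropic ($\alpha,\beta$ among themselves, $\ov\alpha,\ov\beta$ among themselves), so that the right-hand sides of \eqref{vp-assoc} collapse correctly.
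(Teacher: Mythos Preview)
Your proof is correct and follows essentially the same route as the paper---direct manipulation with \eqref{vp-scalar} and \eqref{vp-assoc}---though your argument for (i) is in fact a shade slicker: you apply \eqref{vp-assoc} once with $u=\alpha$, $v=F$, $w=\ov\beta$ and read off the answer, whereas the paper computes $F\times(\alpha\times\ov\beta)$ in two ways to show it vanishes and then argues separately that $\alpha\times\ov\beta$ must be a multiple of $F$. One harmless slip: $T_F^{1,0}S^6$ has complex dimension $3$, not $2$, but since you abandon the basis approach immediately and work intrinsically this never enters the argument.
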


We can improve Lemma \ref{le:alg-S6}(ii) by using Lie theory, as follows. Fix a Cartan subalgebra $\h$ of $\g_2$. By transitivity of $G_2$ on $S^6$ we can take $F$ in the zero weight space $\l_0$.
Consider now the copy of $\su{3}\subset\g_2$ given by the long roots:
\[
\su{3}^\cn=\h^\cn\oplus \g_{\pm\alpha_2} \oplus  \g_{\pm(3\alpha_1+\alpha_2)} \oplus  \g_{\pm(3\alpha_1+2\alpha_2)}.
\]
Under the action of $\su{3}$, $\cn^7$ decomposes as 
\be{S6-decomp}
\cn^7=(\l_{-\alpha_1}\oplus \l_{-\alpha_1-\alpha_2}\oplus \l_{2\alpha_1+\alpha_2})\oplus \l_0\oplus (\l_{\alpha_1}\oplus \l_{\alpha_1+\alpha_2}\oplus \l_{-2\alpha_1-\alpha_2}).
\ee
Since $\su{3}$ acts trivially on $\l_0$, it will also preserve the $(1,0)$- and
$(0,1)$-spaces of the induced orthogonal complex structure on $\l_0^\perp$.  Those spaces are thus the two isotropic spaces in brackets in \eqref{S6-decomp}; they correspond to the standard representation of $\su{3}$ and its dual. Hence, we can take 
$
T^{1,0}_F S^6 =\l_{\alpha_1}\oplus \l_{\alpha_1+\alpha_2}\oplus \l_{-2\alpha_1-\alpha_2}\,;
$
then a brief calculation shows that $T^{1,0}_F S^6 \times T^{1,0}_F S^6$
\emph{is equal to} $T^{0,1}_F S^6$. 

\subsection{Almost complex maps into $S^6$}
For any $n \in \nn$, let $S^n$ denote the unit sphere in $\rn^{n+1}$.
Given a smooth map $F:M \to S^n$ from a Riemann surface, let $f:M \to \RP{n}$ be its composition with
the standard double covering $S^n \to \RP{n}$; equivalently, $f = \spa\{F\}$ is the real line subbundle of $\CC^{n+1}$ of which $F$ is a section.
Embed $\RP{n}$ in $\CP{n}$.  Then if $F$, equivalently $f$, is harmonic, we
can define the Gauss bundles
$G^{(i)}(f)$ \ $(i \in \zn)$ as subbundles of the trivial bundle
$\CC^{n+1} = M \times \cn^{n+1}$ or, equivalently, maps $M \to \CP{n}$;
these are harmonic and $G^{(-i)}(f) = \ov{G^{(i)}(f)}$ \ $(i \in \zn)$,
cf.\ \S \ref{subsec:Grass}.

The \emph{isotropy order} of $f$ (or $F$) is the maximum $r$ such that
$f \perp G^{(i)}(f)$ \ $(i=1,2,\ldots,r)$, or equivalently
(cf.\ \cite[Lemma 3.1]{burstall-wood}), such that
any $r+1$ consecutive Gauss bundles $G^{(i)}(f)$ are zero or orthogonal
(this condition is called \emph{$(r+1)$-orthogonality} in \cite{bolton-pedit-woodward}) and elsewhere.  
The isotropy order of a harmonic map into $S^n$ or $\RP{n}$ is always odd (see, for example,
\cite{bolton-pedit-woodward}).

A harmonic map $F:M \to S^n$ is called \emph{superminimal}
(or \emph{$($real$)$ isotropic}, or \emph{pseudoholomorphic}
\cite{eells-wood}) if it has infinite isotropy order, i.e.,
$G^{(i)}(f) \perp G^{(j)}(f)$ for all $i,j \in \zn$; this happens as soon as the
 isotropy order is at least $n$.
Without loss of generality, we can assume that $F$ is \emph{(linearly) full}, i.e, does not have image in any proper subspace of $\rn^{n+1}$, then $n$ is even, say $n=2m$. 
A full superminimal harmonic map from a Riemann surface $M$ to $S^{2m}$ is the projection of a (super-)horizontal  map into $F_{m,1}^{\rn}$; equivalently, $f = G^{(m)}(h)$ for some `totally isotropic' holomorphic map $h:M \to \CP{2m}$ \cite{eells-wood}.

A smooth map $M \to S^6$ is called \emph{almost complex} if it is (almost-)holomorphic with respect to $J$, i.e., its differential intertwines the complex structure on 
$M$ with $J$.   Such maps are weakly conformal and harmonic, see \cite{bolton-vrancken-woodward}. Note that, for maps into $\cn P^{n-1}=G_1(\cn^n)$, the notions of strong and weak conformality coincide; in particular, almost complex maps $M\to S^6$ are nilconformal. 

The next result follows from work of \cite{bolton-vrancken-woodward} and
\cite{bolton-pedit-woodward}.  We sketch a proof as we shall need some details from it.   We use diagrams in the sense of\/ \cite{burstall-wood}, i.e., vertices represent orthogonal subbundles $\psi_i$ whose sum in $\CC^n$, and an arrow from $\psi_i$ to $\psi_j$ represents the second fundamental form $A'_{\psi_i,\psi_j}$, the absence of this arrow indicating that $A'_{\psi_i,\psi_j}$ is known to vanish.

\begin{proposition} \label{pr:tau} Let $F:M \to S^6$ be a non-constant almost complex map.  Write $f = \spa\{F\}$.  Then  \emph{either}
\begin{enumerate}
\item[(i)] $F$ is a weakly conformal map into a totally geodesic
$S^2 = \Pi^3 \cap S^6$ where $\Pi^3$ is an associative $3$-dimensional
subspace of\/ $\rn^7$, \emph{or}

\item[(ii)] there is a $G_2$-flag
\[
\psi = (\psi_{-3},\psi_{-2},\psi_{-1},\psi_0,\psi_1,\psi_2,\psi_3):M \to G_2/(\U{1} \times \U{1})
\]
with $\psi_i = G^{(i)}(f)$ \ $(i = -2,\ldots,2)$ and
we have the following diagram showing the only possible non-zero second fundamental forms $A'_{\psi_i,\psi_j}:$

\be{tau}
\xymatrix{ 
\psi_{-3} \ar[r] & \psi_{-2} \ar[r] & \psi_{-1} \ar[r] &
\psi_0 \ar[r] & \psi_1 \ar[r] & \psi_2 \ar[r] \ar@/^1.5pc/[lllll] &
\psi_3 \ar@/_1.5pc/[lllll]
}
\ee
\end{enumerate}
\end{proposition}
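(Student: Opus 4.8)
The plan is to read everything off the harmonic sequence $\bigl(G^{(i)}(f)\bigr)_{i\in\zn}$ of $f=\spa\{F\}$, which is defined because an almost complex map is weakly conformal and harmonic; recall $G^{(-i)}(f)=\ov{G^{(i)}(f)}$ since $f$ is real. Let $\alpha$ be a local section spanning $G^{(1)}(f)=G'(f)$; as $(\pa_z F,F)=\tfrac12\pa_z(F,F)=0$ we may take $\alpha=\pa_z F$, and the almost complex condition says precisely that $\alpha\in T_F^{1,0}S^6$, i.e.\ $F\times\alpha=\ii\alpha$. Two consequences of Lemma \ref{le:alg-S6} drive the argument. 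First, part (iii) with $\beta=\alpha$ (and $(\alpha,\ov\alpha)=|\alpha|^2$) gives $\alpha\times\alpha=0$. Second, if a subbundle has a local section $\sigma$ with $F\times\sigma=\ii\sigma$, then differentiating in $z$ and using \eqref{vp-assoc} gives $(F\times\,\cdot\,-\ii)(\pa_z\sigma)=-\alpha\times\sigma$, whose right-hand side lies in $T_F^{0,1}S^6$ by Lemma \ref{le:alg-S6}(ii); since $F$ is real, $F\times\,\cdot\,$ acts by $0,\ii,-\ii$ on the Hermitian-orthogonal splitting $\cn^7=\cn F\oplus T_F^{1,0}S^6\oplus T_F^{0,1}S^6$, so taking $\sigma=\alpha$ (right-hand side zero) gives $\pa_z\alpha\in T_F^{1,0}S^6$; hence $G^{(2)}(f)\subseteq T_F^{1,0}S^6$ and, by conjugation, $G^{(-1)}(f),G^{(-2)}(f)\subseteq T_F^{0,1}S^6$.

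Now I split into cases according to whether $G^{(2)}(f)$ vanishes. If $G^{(2)}(f)\equiv0$ then $G^{(i)}(f)=0$ for all $|i|\ge2$, so the span of all the Gauss bundles --- a constant subbundle, being $\pa_z$- and $\pa_{\zbar}$-closed --- equals $\Pi\otimes\cn$ for a real $3$-plane $\Pi\subset\rn^7$ with $\Pi\otimes\cn=\cn F\oplus G^{(1)}(f)\oplus G^{(-1)}(f)$. Writing $\alpha=a-\ii b$ with $a,b$ real (orthogonal, of equal length, by isotropy of $\alpha$), the relation $F\times\alpha=\ii\alpha$ gives $F\times a=b$, $F\times b=-a$, and $\alpha\times\alpha=0$ gives $a\times b=0$; by Lemma \ref{le:assoc}(a)(iii) applied to $F,a/|a|,b/|a|$ the subspace $\Pi$ is associative. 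Since $F$ has unit norm and values in $\Pi$, it maps into the great $2$-sphere $\Pi\cap S^6$, which is totally geodesic in $S^6$; this is case (i).

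If instead $G^{(2)}(f)\not\equiv0$, put $\psi_i=G^{(i)}(f)$ for $|i|\le2$, $\psi_3=\psi_1\times\psi_2$ and $\psi_{-3}=\ov\psi_3$. With $\alpha_2$ a local section of $\psi_2$ (Hermitian-orthogonal to $\alpha$), Lemma \ref{le:alg-S6}(iii) gives $|\alpha\times\alpha_2|^2=2|\alpha|^2|\alpha_2|^2\neq0$, so $\psi_3=\spa\{\alpha\times\alpha_2\}$ is a line subbundle lying in $T_F^{0,1}S^6$ by Lemma \ref{le:alg-S6}(ii). Short computations with \eqref{vp-scalar}--\eqref{vp-assoc} and Lemma \ref{le:alg-S6}(i) (using $\alpha\times\alpha=0=\alpha_2\times\alpha_2$) show that $\psi_{-3},\dots,\psi_3$ are mutually orthogonal line subbundles spanning $\CC^7$ with $\psi_0=f$ and $\psi_{-i}=\ov\psi_i$, so $\psi=(\psi_{-3},\dots,\psi_3)$ lies in $F_{1,1,1,1}^{\rn}=F_3$; and it is a $G_2$-flag because, by Lemma \ref{le:G2-trans} and \S\ref{subsec:repn}, after acting by $G_2$ we may take $F$ to span $\ell_0$ and $\psi$ to be the standard flag \eqref{std-flag}($s$=3), for which \eqref{G2-flag} holds by \eqref{weight-mult}. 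Hence $\psi$ maps into $\T_3\cong G_2/(\U1\times\U1)$ (Proposition \ref{pr:T-and-T}). It remains to establish the pattern \eqref{tau}. The forms $A'_{\psi_i,\psi_{i+1}}$ for $-2\le i\le1$ are scalar multiples of the Gauss-transform maps $A'_{\psi_i}=A'_{G^{(i)}(f)}$, whose images are $\psi_{i+1}$, so for $-2\le i\le 1$ the only possibly non-zero $A'_{\psi_i,\psi_j}$ have $j=i+1$; differentiating $F\times\alpha_2=\ii\alpha_2$ as above shows $G'(\psi_2)=G^{(3)}(f)\subseteq\psi_3\oplus\psi_{-3}$ (the arrows out of $\psi_2$), and expanding $\pa_z(\alpha\times\alpha_2)$ and using the identity $\alpha\times(\ov\alpha\times\ov\alpha_2)=-2|\alpha|^2\ov\alpha_2$ (which follows from \eqref{vp-assoc} and Lemma \ref{le:alg-S6}(i)) shows $G'(\psi_3)\subseteq\psi_{-2}$ (the arrow $\psi_3\to\psi_{-2}$); by \eqref{reality} and Lemma \ref{le:2ffprops}(i),(ii) the remaining forms are adjoints or conjugates of ones already shown to vanish or to have image as indicated, so \eqref{tau} holds. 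This is case (ii); alternatively one may simply quote \cite{bolton-vrancken-woodward} and \cite{bolton-pedit-woodward} for this last step.

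I expect the last step --- checking that the harmonic sequence closes up exactly as in \eqref{tau}, i.e.\ $G'(\psi_2)\subseteq\psi_3\oplus\psi_{-3}$ and $G'(\psi_3)\subseteq\psi_{-2}$ --- to be the only real obstacle: it amounts to pushing the octonionic identities \eqref{vp-scalar}--\eqref{vp-assoc} through one or two more $\pa_z$-derivatives and keeping track of which components survive the Gauss-transform projections. Everything preceding it is a routine translation of the almost complex condition via Lemma \ref{le:alg-S6}, and the dichotomy itself is just whether $G^{(2)}(f)$ vanishes identically.
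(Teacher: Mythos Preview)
Your argument follows essentially the same route as the paper's: differentiate the almost complex relation $F\times F_1=\ii F_1$ to place $G^{(2)}(f)$ in $T^{1,0}_FS^6$, split on whether $G^{(2)}(f)$ vanishes, set $\psi_3=\psi_1\times\psi_2$ and identify the resulting $G_2$-flag, then read off the diagram from harmonic-sequence orthogonality together with the reality/adjoint symmetry of Lemma~\ref{le:2ffprops}. Two small slips are worth noting. First, in case~(i) the claim ``$\alpha\times\alpha=0$ gives $a\times b=0$'' is vacuous (anticommutativity makes $\alpha\times\alpha=0$ identically) and in fact $a\times b=|a|^2F$; fortunately your appeal to Lemma~\ref{le:assoc}(a)(iii) via $F\times a=b$ is what actually proves associativity, so the conclusion stands. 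Second, differentiating $F\times\alpha_2=\ii\alpha_2$ only yields $\pa_z\alpha_2\in T^{1,0}_F\oplus\psi_3$, hence $G'(\psi_2)\subseteq\psi_1\oplus\psi_{-3}\oplus\psi_3$; the $\psi_1$-component is killed by the isotropy-order argument (the paper makes explicit that $F_1,F_2\in T^{1,0}_F$ forces isotropy order $\ge5$, whence $G^{(3)}(f)\perp\psi_{-2}\oplus\cdots\oplus\psi_2$), which you use implicitly but should state.
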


\begin{proof} Define $F_i$ iteratively by
\be{Fi}
	F_0=F, \quad F_i = A'_{G^{(i-1)}(f)}(F_{i-1}), \quad \text{and} \quad F_{-i} = \ov F_i \quad (i =1,2,\ldots),
\ee 
 so that the $F_i$ are sections of $G^{(i)}(f)$. Since $F$ is non-constant,
the sections $F_{\pm 1}$  are not identically zero, however, $F_i$ may be zero when $\vert i \vert$ is sufficiently large.  Since $F$ is weakly conformal, it has isotropy order at least $3$ (recall the isotropy order is always odd), so that
$\{F_{-1}, F, F_1\}$ are mutually orthogonal.   Since $F$ is almost complex, we have
\be{F1}
J F_1 = F \times F_1 = \ii F_1\,.
\ee

(i) Suppose that $F_2$ is identically zero.  Then $\Pi^3 = \spa\{F_{-1},F,F_1\}$ is a constant
$3$-dimensional subspace; by \eqref{F1} and Lemma \ref{le:assoc}, this is associative.
Further, $F:M \to S^6$ is a weakly conformal map into the totally geodesic $S^2$ given by $S^2 = \Pi^3 \cap S^6$.

(ii) Suppose, instead, that $F_2$ is not identically zero. Then, with a prime denoting differentiation  
 with respect to a local complex coordinate, \eqref{F1} gives
$J{F_1}' =  F \times {F_1}' = \ii {F_1}'$.  Since $F' = F_1$, ${F_1}' = F_2 \mod F_1$ and
$JF_2$ is orthogonal to $JF_1$, this yields 
\be{F2}
J F_2 = F \times F_2 = \ii F_2\,.
\ee

{}From \eqref{F1} and \eqref{F2}, $F_1$ and $F_2$ both lie in the isotropic
subspace $T^{1,0}_F S^6$; it follows that $F$ has isotropy order at least $5$.
Next, recalling \eqref{Fi}, set $\Psi_i = F_i$ \ $(i = -2,\ldots, 2)$ and define
\be{Psi3}
\Psi_3 = \Psi_1 \times \Psi_2\,, \qquad \Psi_{-3} = \ov{\Psi}_3.
\ee
Note that, unlike $F_3$, $\Psi_3$ is not in general a section of $G^{(3)}(f)$.
However, by Lemma \ref{le:alg-S6}, $\Psi_3$ is a non-zero element of
$T^{0,1}_F S^6$ so that $\Psi_{-3}$ is a non-zero element of $T^{1,0}_F S^6$.
{}From \eqref{Psi3}, $\Psi_{-3}$ is orthogonal to $\Psi_1$
and $\Psi_2$.   Set $\psi_i = \spa\{\Psi_i\}$ \ $(i=-3,\ldots,3)$ and
$\psi_i = 0$ for $|i| > 3$.
Then the bundles $\psi_i$ are mutually orthogonal,
$T^{1,0}_F S^6 = \psi_1 \oplus \psi_2 \oplus \psi_{-3}$ and 
$T^{0,1}_F S^6 = \psi_{-1} \oplus \psi_{-2} \oplus \psi_3$. 

We show that $(\psi_i)$ is a $G_2$-flag, i.e., $\psi_i \times \psi_j = \psi_{i+j}$ \ $(i,j \in \zn)$. 
Indeed, $\Psi_1 \times \Psi_2 = \Psi_3$ implies that 
$\Psi_1 \times \Psi_3 = \Psi_1 \times (\Psi_1 \times \Psi_2) =0$.  Similarly $\Psi_2 \times \Psi_3 = 0$.   Hence, setting
$\l = \psi_3$, and $D = \psi_3 \oplus \psi_2$, the $\psi_i$ are the legs of the standard $G_2$-flag
\eqref{F-filts}($s=3$).

We show that we get the diagram \eqref{tau}.
First, $\psi_{-2}  \to \cdots \to \psi_2$ is part of the
harmonic sequence of $f$ so there are no other second fundamental forms
between these elements.  Next, $\I A'_{\psi_2} = G'(G^{(2)}(f)) = G^{(3)}(f)$.   Since $f$ has isotropy order at least $5$, this is orthogonal to 
$\psi_{-2} \oplus \cdots \oplus \psi_2$, hence
$G^{(3)}(f) = \I A'_{\psi_2} \subset \psi_{-3} \oplus \psi_3$; taking the conjugate shows that 
$G^{(-3)}(f) = \I A''_{\psi_{-2}} \subset \psi_{-3} \oplus \psi_3$.  By Lemma
\ref{le:2ffprops}(iii), $A'_{\psi_3,\psi_{-3}} = 0$. 
It follows that the only possible non-zero second fundamental forms are those shown in \eqref{tau}.
\end{proof}

\begin{remark}
(i) In \cite[p.\ 147]{bolton-pedit-woodward} there is a precise multiplication table for the $\Psi_i$\,.

(ii) A smooth map $\psi = (\psi_{-3},\psi_{-2},\psi_{-1},\psi_0,\psi_1,\psi_2,\psi_3):M \to G_2/\bigl(\U{1} \times \U{1}\bigr)$ with the only possible second fundamental forms as in \eqref{tau} is called \emph{$\tau$-primitive} in
\cite{bolton-pedit-woodward}.

(iii) In \cite{bolton-vrancken-woodward}, almost complex maps $F:M \to S^6$ are classified into four types as follows:
(I) full superminimal harmonic maps into $S^6$,
(II) full non-superminimal harmonic maps into $S^6$,
(III) full non-superminimal harmonic maps into a totally geodesic $S^5$, 
(IV) weakly conformal maps into $S^2 = \Pi^3 \cap S^6$ for some
 associative $3$-dimensional subspace $\Pi^3$.
For a Type (I) map, we have $\psi_3 = G^{(3)}(f)$ and diagram \eqref{tau} reduces to the harmonic sequence:
\be{ha-seq}
G^{(-3)}(f) \to G^{(-2)}(f) \to G^{(-1)}(f) \to f \to G^{(1)}(f) \to G^{(2)}(f) \to G^{(3)}(f),
\ee
 see also \S \ref{subsec:superminimal}.
For a Type (II) map, all the second fundamental forms in
diagram \eqref{tau} are non-zero, so that $G^{(3)}(f) \neq \psi_3$, however
 $G^{(3)}(f) \oplus G^{(-3)}(f) = \psi_3 \oplus \psi_{-3}$\,.
For a Type (III) map, the harmonic sequence of $f$ lies in a $6$-dimensional subspace $Y$ of $\rn^7$ and is periodic of period $6$, hence $G^{(3)}(f) = G^{(-3)}(f)$ and $\psi_3 \oplus \psi_{-3} = G^{(3)}(f) \oplus Y^{\perp}$. 
\end{remark}

\subsection{Building harmonic maps into $G_2/\SO{4}$} \label{subsec:building}

We give a way of building harmonic maps into $G_2/\SO{4}$ from almost complex maps into $S^6$.   We need the following \emph{Reduction Theorem}.

\begin{theorem} \label{th:reduction} \cite[Theorem 4.1]{burstall-wood}
Let $f$ be a harmonic map from a Riemann surface to a complex Grassmannian, and let
$\alpha$ be {\rm (i)} a holomorphic line subbundle of\/ $\ker A'_{f^{\perp}}$, or
{\rm (ii)} an antiholomorphic line subbundle of\/ $\ker A''_{f^{\perp}}$. Set
$\wh{\varphi} = f \oplus \alpha$.  Then

{\rm (a)} $\wh{\varphi}$ is also harmonic$;$

{\rm (b)} $\wh{\varphi}$ is nilconformal if and only if $f$ is.
\end{theorem}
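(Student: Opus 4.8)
The plan is to argue throughout in the Cartan-embedded picture of \S\ref{subsec:Grass}. Write $\tilde f := f\oplus\alpha$ and split $\CC^n = f\oplus\alpha\oplus\gamma$ with $\gamma := (f\oplus\alpha)^{\perp}$. First I would reduce to case (i): reversing the complex structure on $M$ (passing to the conjugate surface $\bar M$) interchanges $\pa_z$ with $\pa_{\zbar}$, hence $A'$ with $A''$ and `holomorphic' with `antiholomorphic', replaces the fundamental endomorphism $\A$ by $-(\A)^{*}$, and preserves both harmonicity and nilconformality; so case (ii) for $f$ on $M$ is case (i) for $f$ on $\bar M$, and the conclusions transfer back. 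In case (i), the two hypotheses say, via Lemma \ref{le:2ffprops}, that $A'_{\alpha,f}=0$ (equivalently, by the adjoint relation, $A''_{f,\alpha}=0$) and --- since $\alpha$ is then a holomorphic subbundle of $f^{\perp}$ --- that $A''_{\alpha,\gamma}=0$ (equivalently $A'_{\gamma,\alpha}=0$).

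For (a) I would use the criterion that $\tilde f$ is harmonic iff $A'_{\tilde f}\colon\tilde f\to\gamma$ is holomorphic. The restriction of $A'_{\tilde f}$ to $f\subseteq\tilde f$ equals $\pi_{\gamma}\circ A'_{f}$, and its restriction to $\alpha$ equals the relative second fundamental form $A'_{\alpha,\gamma}$; since holomorphicity of a morphism is a tensorial condition, it suffices to verify it on local sections of $f$ and of $\alpha$ in turn. On $\Gamma(f)$: because $A''_{f,\alpha}=0$, the connection $\nabla''_{\tilde f}$ preserves $\Gamma(f)$, and applying $\pi_{\gamma}$ to the harmonicity identity $\nabla''_{f^{\perp}}A'_{f}=A'_{f}\nabla''_{f}$ for $f$ --- commuting $\pi_{\gamma}$ past $\nabla''_{f^{\perp}}$ using $A''_{\alpha,\gamma}=0$ --- yields exactly the required identity. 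On $\Gamma(\alpha)$: choosing a local holomorphic frame $a$ of $\alpha$ (so $\nabla''_{f^{\perp}}a=0$, whence $\pa_{\zbar}a\in\Gamma(f)$), the hypothesis $A'_{\alpha,f}=0$ lets one write $A'_{\alpha,\gamma}(a)$ as $\pa_z a$ minus a section of $\alpha$; differentiating once more by $\pa_{\zbar}$, the $\alpha$-terms drop out and one is left with $\pi_{\gamma}A'_{f}(\pa_{\zbar}a)$, which is precisely $A'_{\tilde f}(\nabla''_{\tilde f}a)$. Hence $A'_{\tilde f}$ is holomorphic and $\tilde f$ is harmonic. (These two checks amount to saying that $\alpha^{\perp}$ satisfies Uhlenbeck's conditions for a uniton of $\iota\circ f$, so that $\iota\circ\tilde f=(\iota\circ f)(\pi_{\alpha^{\perp}}-\pi_{\alpha})$ is harmonic; I would note this but give the direct argument above.)

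For (b), set $\varphi=\iota\circ f$ and $\tilde\varphi=\iota\circ\tilde f$. Using that the fundamental endomorphism of a Cartan-embedded Grassmannian map is minus the direct sum of its two second fundamental forms (\S\ref{subsec:Grass}), together with $A'_{\alpha,f}=0$ and $A'_{\gamma,\alpha}=0$, one finds that $\A$ and $A^{\tilde\varphi}_z$ are block-off-diagonal for the splittings $f\oplus f^{\perp}$ and $\tilde f\oplus\tilde f^{\perp}$, that they share the corner maps $A'_{f,\gamma}$ and $A'_{\gamma,f}$, and that $A^{\tilde\varphi}_z$ carries in addition exactly one further entry, $A'_{\alpha,\gamma}\colon\alpha\to\gamma$. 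Since $\left(\begin{smallmatrix}0&B\\ C&0\end{smallmatrix}\right)$ is nilpotent iff $BC$ is, $\varphi$ is nilconformal iff $A'_{\gamma,f}\circ A'_{f,\gamma}=(\A)^{2}|_{f}$ is nilpotent, and $\tilde\varphi$ is nilconformal iff $(A^{\tilde\varphi}_z)^{2}|_{\tilde f}$ is; but in the $(f,\alpha)$ blocks the latter operator is block-upper-triangular with diagonal blocks $(\A)^{2}|_{f}$ and $0$, hence nilpotent iff $(\A)^{2}|_{f}$ is. This gives $f\oplus\alpha$ nilconformal iff $f$ is.

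The only genuine labour I anticipate is the second-fundamental-form bookkeeping in part (a), in particular the computation on sections of $\alpha$, which needs two successive $\pa_{\zbar}$-differentiations together with the remark $\pa_{\zbar}a\in\Gamma(f)$. The single load-bearing fact throughout is that $A'_{\alpha,f}=0$ (equivalently $A''_{f,\alpha}=0$) is exactly what is needed to kill the coupling between $f$ and $\alpha$ inside both $\nabla_{\tilde f}$ and $A'_{f^{\perp}}$; everything else is formal.
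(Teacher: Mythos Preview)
Your proposal is correct. For part (a) you supply in full the second-fundamental-form verification that the paper simply outsources (the paper's proof of (a) is the single sentence ``This is another example of adding a uniton, or see \cite{burstall-wood}''); your computation on $\Gamma(\alpha)$ is right, and your remark that this is the uniton condition for $\alpha^{\perp}$ matches the paper's viewpoint exactly.

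For part (b) you take a slightly different but equivalent route. The paper's proof is the one-line identity
\[
(A^{\tilde\varphi}_z)^{2k+1}(\tilde\varphi^{\perp}) \;=\; (A^{f}_z)^{2k+1}(f^{\perp})
\qquad(k\in\nn),
\]
which follows because $A^f_z$ kills $\alpha$ (so the $\alpha$-component produced at each even step plays no role on the next application) while $A^{\tilde\varphi}_z$ sends $\gamma$ into $f$ only. You instead observe that $(A^{\tilde\varphi}_z)^{2}|_{\tilde f}$ is block upper-triangular in the $(f,\alpha)$ splitting with diagonal blocks $(A^f_z)^{2}|_{f}$ and $0$, and conclude via the ``$\bigl(\begin{smallmatrix}0&B\\ C&0\end{smallmatrix}\bigr)$ nilpotent $\iff BC$ nilpotent'' criterion. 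Both arguments rest on the same two vanishings $A'_{\alpha,f}=0$ and $A'_{\gamma,\alpha}=0$ and are equally short; the paper's formulation has the minor advantage of giving an explicit equality of image bundles (hence control on nilorders), while yours makes the linear-algebra mechanism more transparent.
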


\begin{proof}
(a) This is another example (cf.\ \S \ref{subsec:Grass})
of adding a uniton \cite{uhlenbeck}, or see \cite{burstall-wood}.

(b) It is easily checked that
 $(\A)^{2k+1}(\wh{\varphi}^{\perp}) = (A^f_z)^{2k+1}(f^{\perp}) \ \forall k \in \nn$.  The result follows.
\end{proof}

\begin{proposition} \label{pr:alpha-constr}
Let $F:M \to S^6$ be almost complex and let $\alpha$ be a holomorphic line subbundle of
$F^{-1}T^{1,0}S^6$.   Set
\be{alpha-constr}
\varphi = \ov{\alpha} \oplus \spa\{F\} \oplus \alpha.
\ee
 Then
$\varphi$ is a nilconformal harmonic map from $M$ to $G_2/\SO{4}$.
\end{proposition}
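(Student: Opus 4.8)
The plan is to use the Reduction Theorem (Theorem~\ref{th:reduction}) twice, once for each of the two unitons $\ov\alpha$ and $\alpha$, and then check the $G_2$-condition that forces the image to lie in $G_2/\SO{4}$ rather than merely in a Grassmannian. First I would observe that, since $F:M \to S^6 \subset \CP{6}$ is almost complex, it is a (weakly conformal) harmonic map into $\RP{6} \subset \CP{6}$, and so $f = \spa\{F\}$ is harmonic with the Gauss-bundle machinery of \S\ref{subsec:Grass} available; moreover $f$ is nilconformal, as noted just before the statement, since for maps into $\CP{6}$ weak and strong conformality coincide.

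Next I would check the hypotheses of the Reduction Theorem. The bundle $\alpha$ is a holomorphic line subbundle of $F^{-1}T^{1,0}S^6$; I need to see that $\alpha \subset \ker A'_{f^{\perp}}$ so that (i) applies with the harmonic map $f$ and uniton $\alpha$. The key algebraic input is Lemma~\ref{le:alg-S6}(i): for $\alpha$ a section of $T_F^{1,0}S^6$ orthogonal to $F$, one has $\alpha \times \ov\alpha = \ii\,|\alpha|^2 F$, while Lemma~\ref{le:alg-S6}(ii) gives $\alpha \times \alpha \in T_F^{0,1}S^6$; together with $F \times \alpha = \ii\alpha$ (the almost-complex condition, cf.\ \eqref{F1}), this pins down how $\A$ acts. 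Concretely, $f \oplus \alpha$ is the sum of three mutually orthogonal line bundles $\ov\alpha$, $f$, $\alpha$, and I would compute $A'_{f^{\perp}}$ to verify $\alpha$ lies in its kernel (equivalently, that differentiating and projecting keeps $\alpha$ holomorphic and horizontal in the required sense) — this is exactly the setting where adding $\alpha$ as a uniton produces a new harmonic map. Applying Theorem~\ref{th:reduction}(a) to $f$ with uniton $\alpha$ gives that $f \oplus \alpha$ is harmonic; applying it again (now in the form (ii), with the antiholomorphic line subbundle $\ov\alpha$ of $\ker A''$) to $f \oplus \alpha$ gives that $\varphi = \ov\alpha \oplus f \oplus \alpha$ is harmonic into $G_3(\cn^7)$. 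Part (b) of the theorem, used twice, gives that $\varphi$ is nilconformal because $f$ is.

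It then remains to show that $\varphi$ actually takes values in $G_2/\SO{4}$, i.e.\ that the $3$-dimensional real subspace $\varphi = \ov\alpha \oplus \spa\{F\} \oplus \alpha$ (it is real, being conjugation-invariant by construction) is associative. For this I would invoke Lemma~\ref{le:assoc}(a): the complexified space $\varphi \otimes \cn$ has the basis $\{\ov\alpha, \spa\{F\}, \alpha\}$ with $\spa\{F\}$ real, and I need $F \times \alpha = \ii\alpha$ (up to normalizing $F$ and $\alpha$ to unit norm), which is precisely the almost-complex condition \eqref{F1}, matching criterion (a)(v) of Lemma~\ref{le:assoc}. Hence $\varphi$ is associative at each point and $\varphi:M \to G_2/\SO{4}$.

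The main obstacle I expect is the verification that $\alpha$ satisfies the kernel conditions of the Reduction Theorem — i.e.\ that $\alpha \subset \ker A'_{f^{\perp}}$ and, after the first step, $\ov\alpha \subset \ker A''_{(f\oplus\alpha)^{\perp}}$ — which requires combining the almost-complex equation with the identities of Lemma~\ref{le:alg-S6} and keeping careful track of $(1,0)$- versus $(0,1)$-types; the octonion-product bookkeeping (using Lemma~\ref{le:diff2}(i), that $\A$ is a derivation for $\times$) is where the real content lies. Once that is in place, everything else is a direct application of Theorems~\ref{th:reduction} and the associativity criterion of Lemma~\ref{le:assoc}.
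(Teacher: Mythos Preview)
Your approach is essentially the paper's: verify associativity of $\varphi$ via Lemma~\ref{le:assoc}, then apply the Reduction Theorem twice (first with the holomorphic uniton $\alpha$, then with the antiholomorphic $\ov\alpha$), with nilconformality coming from part~(b). One caveat: you cannot invoke Lemma~\ref{le:diff2}(i) for the kernel verification, since that derivation property of $\A$ requires a map with values in $G_2$, whereas at this stage you only have $f:M\to\RP^6$. The paper instead reads off $\alpha\subset\ker A'_{f^{\perp}}$ from Proposition~\ref{pr:tau}: one has $F^{-1}T^{1,0}S^6=\psi_{-3}\oplus\psi_1\oplus\psi_2\subset(\psi_{-1}\oplus\psi_0)^{\perp}\subset\ker A'_{f^{\perp}}$. (A direct alternative, closer to what you sketch: $\ker A'_{f^{\perp}}$ is the Hermitian-orthogonal complement of $G''(f)$ in $f^{\perp}$; since $F$ is almost complex, $G''(f)\subset T^{0,1}_FS^6$, which is Hermitian-orthogonal to $T^{1,0}_FS^6\supset\alpha$ because $T^{1,0}_FS^6$ is isotropic.) For the second step the paper notes that $\ov\alpha\subset\ker A''_{f^{\perp}}$ by conjugating the first, and that $A''_{\ov\alpha,\alpha}=0$ by Lemma~\ref{le:2ffprops}(iii), so $\ov\alpha\subset\ker A''_{(f\oplus\alpha)^{\perp}}$; this is exactly the second kernel condition you anticipated.
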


\begin{proof}
Let $L$ be a nowhere zero (local) section of $\alpha$.
By Lemma \ref{le:alg-S6},
$L \times \ov{L}\big/|L|^2 = \ii F$, so by Lemma \ref{le:assoc},
$\varphi$ is associative.  {}From the proof of Proposition \ref{pr:tau},
$F^{-1}T^{1,0}S^6 = \psi_{-3} \oplus \psi_1 \oplus \psi_2 \subset
(\psi_{-1} \oplus \psi_0)^{\perp} \subset \ker A'_{f^{\perp}}$.
Thus $\alpha \subset \ker A'_{f^{\perp}}$ and, by 
Theorem \ref{th:reduction} part (i),
$\wh{\varphi} = f \oplus \alpha$ is a harmonic map into a Grassmannian.

Now $\ov{\alpha}$ lies in $\ker A''_{f^{\perp}}$.
Further, by Lemma \ref{le:2ffprops}(iii), $A''_{\ov{\alpha}, \alpha} = 0$,
so $\ov{\alpha}$ lies in $\ker A''_{{\wh{\varphi}^{\perp}}}$.
Hence, we may apply Theorem \ref{th:reduction} part (ii) to see that
$\varphi = \wh{\varphi} \oplus \ov{\alpha}$ is harmonic.
\end{proof}

\begin{remark} \label{re:fi-uniton-no}
Since $\varphi$ is obtained from $f$ by adding a couple of unitons,
the harmonic map \eqref{alpha-constr} is of finite uniton number
(see \S \ref{sec:finite-uniton}) if and only the
almost complex map $F:M \to S^6$ is of finite uniton number.
See Example \ref{ex:s2} for a specific example.
\end{remark}

\begin{example} \label{ex:F-const}
Suppose that $F$ is constant.
Then $F^{-1}T^{1,0}S^6 = M \times T_F^{1,0}S^6$ is a constant maximally isotropic subbundle of
$f^{\perp} \!\otimes \cn \cong \CC^6$.  Choose an identification of the vector space $T_F^{1,0}S^6$
with $\cn^3$.   Then, given a non-constant holomorphic map
from $M$ to $\CP{2}$, we get a corresponding line subbundle $\alpha$
of $F^{-1}T^{1,0}S^6$, and Proposition \ref{pr:alpha-constr} gives a harmonic map
$\varphi:M \to G_2/\SO{4}$.

In fact, it can be checked that (a) $\varphi$ and $\varphi^{\perp}$ are strongly conformal, (b) $\rank G'(\varphi) =1$, and (c) $G'(\varphi^{\perp}) \times G''(\varphi^{\perp})$ is a constant subbundle
of $\CC^7$ (namely $f$).  The construction gives a one-to-one correspondence between pairs $(F,\alpha)$ as above and harmonic maps $\varphi$ satisfying these three properties. 
\end{example} 

In the case that $F$ is non-constant, by setting $\alpha = G'(f)$ we obtain
harmonic maps into $G_2/\SO{4}$ which complement those of Example
\ref{ex:F-const}, as follows.
Recall that a smooth map $\varphi$ into a Grassmannian is strongly conformal if and only if $s(\varphi) = 1$.

\begin{theorem} \label{th:one-one} There is a one-to-one correspondence \emph{between} 
\begin{enumerate}
\item almost complex maps $F:M \to S^6$ with image not contained in a totally geodesic $S^2$,
\emph{and}
\item strongly conformal harmonic maps $\varphi:M \to G_2/\SO{4}$ with
{\rm (a)} $\varphi^{\perp}$ strongly conformal, {\rm (b)} $\rank G'(\varphi) = 1$ and 
{\rm (c)} $G'(\varphi^{\perp}) \times G''(\varphi^{\perp})$ a non-constant subbundle,
\end{enumerate}
\emph{given by}
\be{f-phi}
F \mapsto \varphi = G''(f) \oplus f \oplus G'(f) 
\quad \text{with inverse} \quad
\varphi \mapsto F = \ii\, \ov{L} \times L \big/ \vert L \vert^2;
\ee
here $f = \spa\{F\}$ and $L$ is any nowhere zero (local) section of $G'(\varphi^{\perp})$.
\end{theorem}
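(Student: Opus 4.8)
The plan is to verify that the two maps in \eqref{f-phi} are well defined, land in the asserted classes, and are mutually inverse. First I would check that $F\mapsto\varphi$ produces an object of the right kind: given a non-constant almost complex $F$ whose image misses every totally geodesic $S^2$, we are in case (ii) of Proposition \ref{pr:tau}, so the Gauss bundles $G^{(i)}(f)$ are line bundles for $|i|\le 3$ and $G''(f)=G^{(-1)}(f)$, $G'(f)=G^{(1)}(f)$ are isotropic line subbundles of $f^\perp$ with $G'(f)\subset T^{1,0}S^6$. Taking $\alpha=G'(f)$ in Proposition \ref{pr:alpha-constr} shows $\varphi = \ov\alpha\oplus f\oplus\alpha = G''(f)\oplus f\oplus G'(f)$ is a nilconformal harmonic map into $G_2/\SO{4}$. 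To see it is strongly conformal with the three listed properties, I would compute the Gauss bundles of $\varphi$ directly from diagram \eqref{tau}: the $\pa'$-second fundamental form of $\varphi$ is carried by the arrows leaving $\psi_{-1}\oplus\psi_0\oplus\psi_1 = \varphi$ that land in $\varphi^\perp$, giving $G'(\varphi)=\psi_2$ (rank one, property (b)) and $G''(\varphi)=\psi_{-2}$; these are orthogonal, so $s(\varphi)=1$, i.e.\ $\varphi$ is strongly conformal. Likewise $G'(\varphi^\perp)$ and $G''(\varphi^\perp)$ are read off from the arrows leaving $\psi_{-3},\psi_{-2},\psi_2,\psi_3$: one gets $G'(\varphi^\perp)=\psi_3$ (together with $\psi_{-1}$ coming back from $\psi_2$, but $\psi_{-1}\subset\varphi$, so within $\varphi^\perp$ only $\psi_3$ survives for $\pa'$ out of $\psi_2$, and $\psi_{-3}$ maps into $\psi_{-2}\subset\varphi^\perp$; a short bookkeeping shows $G'(\varphi^\perp)$ is spanned by $\psi_3$ and $G''(\varphi^\perp)$ by $\psi_{-3}$), hence $\varphi^\perp$ is strongly conformal, and $G'(\varphi^\perp)\times G''(\varphi^\perp)=\psi_3\times\psi_{-3}$, which by Lemma \ref{le:alg-S6}(i) equals $\spa\{F\}=f$. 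Since $F$ is non-constant, property (c) holds.

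Next I would check the reverse map. Let $\varphi:M\to G_2/\SO{4}$ be strongly conformal with (a), (b), (c), and let $L$ be a nowhere-zero local section of $G'(\varphi^\perp)$. By strong conformality of $\varphi^\perp$, $G'(\varphi^\perp)$ is an isotropic line subbundle of $\varphi$, so $\ov L\times L$ is purely imaginary; define $F=\ii\,\ov L\times L/|L|^2$, a map into $S^6$ (this is well defined independently of the choice of $L$ since rescaling $L$ by a function cancels in numerator and denominator). I would then show $F$ is almost complex: $G'(\varphi^\perp)$ is a holomorphic line subbundle of $(\CC^7,D^\varphi_{\zbar})$, and differentiating $F=\ii\,\ov L\times L/|L|^2$ with Lemma \ref{le:diff2}(i), together with $\varphi$ associative so that $\A$ preserves the weight decomposition, gives that $\pa_{\bar z}F$ lies in the span of $F$ and the $(1,0)$-part — the standard computation identifying almost-holomorphicity; alternatively, one identifies $\spa\{F\}$ as the middle leg $\psi_0$ of the $G_2$-flag built from $\varphi$ via Theorem \ref{th:lifts}/Lemma \ref{le:s3} (since $s(\varphi)=1$ here one must instead produce the length-$6$ flag by hand from (b), (c) — see below), and reads off almost-holomorphicity from $J_2$-holomorphicity of that flag restricted to the $S^6$-fibration $\pi_6$ of \eqref{diag:twistor-spaces}. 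Finally, that the image of $F$ misses every totally geodesic $S^2$ follows from (c): if it did lie in such an $S^2=\Pi^3\cap S^6$ then, by Proposition \ref{pr:tau}(i), $F_2\equiv 0$ and $G'(\varphi^\perp)\times G''(\varphi^\perp)$ would be the constant bundle $\Pi^3\ominus\cdots$, contradicting non-constancy.

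The two constructions are then seen to be mutually inverse by chasing the flag. Starting from $F$, forming $\varphi=G''(f)\oplus f\oplus G'(f)$, we have $G'(\varphi^\perp)=\psi_3=\spa\{\Psi_1\times\Psi_2\}$ in the notation of Proposition \ref{pr:tau}, so a local section is $L=\Psi_1\times\Psi_2$ (up to scale) and $\ii\,\ov L\times L/|L|^2 = \ii\,(\ov\Psi_1\times\ov\Psi_2)\times(\Psi_1\times\Psi_2)/|\Psi_1\times\Psi_2|^2$; using Lemma \ref{le:alg-S6} (parts (i) and (iii)) and \eqref{vp-assoc} this simplifies to $F$, recovering the original map. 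Conversely, starting from $\varphi$ with (a)--(c), set $F=\ii\,\ov L\times L/|L|^2$ and $f=\spa\{F\}$; one shows $G'(f)=G'(\varphi^\perp)$ — indeed $L\in G'(\varphi^\perp)\subset T^{1,0}S^6$ by construction, and $F\times L=\ii L$ identifies $G'(\varphi^\perp)$ with the first Gauss bundle of $f$ — hence $G''(f)=\ov{G'(f)}=G''(\varphi^\perp)$, and property (c) together with $f=G'(\varphi^\perp)\times G''(\varphi^\perp)$ gives $G''(f)\oplus f\oplus G'(f) = G''(\varphi^\perp)\oplus(G'(\varphi^\perp)\times G''(\varphi^\perp))\oplus G'(\varphi^\perp)$; finally property (a) (strong conformality of $\varphi$) forces this sum to be exactly $\varphi$, since $\varphi$ has rank $3$, is associative, contains the isotropic line $G'(\varphi^\perp)$ with its conjugate, and the unique associative $3$-space containing an isotropic line is spanned by the line, its conjugate, and their product (Lemma \ref{le:assoc}(b)(iii)).

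The main obstacle I anticipate is the bookkeeping around $s(\varphi)=1$: because $\varphi$ is strongly conformal, the length-$6$ $G_2$-flag of Proposition \ref{pr:tau}(ii) does \emph{not} arise as the canonical twistor lift of $\varphi$ (that lift lives in $T_1$), so the identification of $\spa\{F\}$ with the middle leg and the reading-off of the various Gauss bundles of $\varphi$ and $\varphi^\perp$ must be done directly from the hypotheses (a)--(c) and Lemma \ref{le:2ffprops}, rather than by invoking Theorem \ref{th:lifts}. Once the diagram of second fundamental forms for such a $\varphi$ is pinned down — essentially reconstructing diagram \eqref{tau} from (a)--(c) — the remaining computations are the routine octonionic identities of Lemma \ref{le:alg-S6} and \eqref{vp-assoc}.
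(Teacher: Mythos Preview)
There is a genuine bookkeeping error in your forward direction. By definition, $G'(\varphi^\perp) = \Ima(A'_{\varphi^\perp}:\varphi^\perp\to\varphi)$ is a subbundle of $\varphi$, not of $\varphi^\perp$; your claim that ``within $\varphi^\perp$ only $\psi_3$ survives'' has this backwards. Reading diagram \eqref{tau} correctly, the only arrow from $\varphi^\perp=\psi_{-3}\oplus\psi_{-2}\oplus\psi_2\oplus\psi_3$ landing in $\varphi=\psi_{-1}\oplus\psi_0\oplus\psi_1$ is $\psi_{-2}\to\psi_{-1}$, so in fact $G'(\varphi^\perp)=\psi_{-1}$ and $G''(\varphi^\perp)=\psi_1$; this is what the paper obtains. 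With that correction, $G'(\varphi^\perp)\times G''(\varphi^\perp)=\psi_{-1}\times\psi_1=f$ follows directly from Lemma~\ref{le:alg-S6}(i), and the checks of (a)--(c) go through as you outline. (Your computation of $\psi_3\times\psi_{-3}$ happens also to land in $f$, so the conclusion for (c) survives by accident, but the identification is wrong and would derail the inverse check.)

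The reverse direction also has a gap. Knowing $F\times L=\ii L$ only places $L$ in $T^{1,0}_F S^6$; it does not by itself identify $\spa\{L\}$ with $G'(f)$, since $T^{1,0}_F S^6$ has rank $3$. The paper supplies the missing argument: writing $\alpha=G''(\varphi^\perp)$ so that $\varphi=\ov\alpha\oplus f\oplus\alpha$, one first observes $f\perp\Ima A''_{\varphi^\perp}$, hence $f\subset\ker A'_\varphi$ and $G'(f)\subset\varphi$; next, $\ov\alpha=\Ima A'_{\varphi^\perp}$ is a holomorphic subbundle of $\varphi$, so $A''_{\ov\alpha,f}=0$, forcing $G'(f)\subset\alpha$; non-constancy of $f$ then gives $G'(f)=\alpha$. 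This is exactly the step you flag as ``the main obstacle'', and your proposed shortcut via $F\times L=\ii L$ does not close it. Once $G'(f)=\alpha$ is established, the mutual-inverse verification is immediate and does not require the octonionic computation with $(\ov\Psi_1\times\ov\Psi_2)\times(\Psi_1\times\Psi_2)$ that you propose.
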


\begin{proof} (i) Let $F$ be as in (1).  Since $f = \spa\{F\}$ is non-constant,
$G^{(-1)}(f)$, $f$ and $G^{(1)}(f)$ are non-zero and mutually orthogonal.
Since $f$ is almost complex, by Lemma \ref{le:assoc}
$\varphi = G^{(-1)}(f) \oplus f \oplus G^{(1)}(f)$ is a map into $G_2/\SO{4}$.
We have case (ii) of Proposition \ref{pr:tau}, and we use
the notation of that proposition writing $\psi _i = G^{(i)}(f)$ \ $(|i| \leq 2)$. 
We have $G'(\varphi) =  G^{(2)}(f) = \psi_2$ which is non-zero and
isotropic, so that $\varphi$ is strongly conformal
with $\rank G'(\varphi) = 1$.
In fact, combining subbundles in \eqref{tau}, we obtain the diagram

\be{tau-str-conf}
\xymatrix{ 
\psi_{-3} \oplus \psi_{-2}  \ar[r] & \psi_{-1} \oplus
\psi_0 \oplus \psi_1  \ar[r] & \psi_2 \oplus \psi_3  \ar@/_1.5pc/[ll]
}
\ee
which shows that $\varphi$ has the twistor lift $W:M \to T_1$
where $W = \psi_2 \oplus \psi_3$.  Further, $G'(\varphi^{\perp}) = \psi_{-1}$
is isotropic, so that $\varphi^{\perp}$ is strongly conformal; also 
$G'(\varphi^{\perp}) \times G''(\varphi^{\perp}) = \psi_{-1} \times \psi_1 = f$,
which is non-constant, so condition (2)(c) holds.

(ii) Conversely, let $\varphi$ be as in (2) and  set
$\alpha = G''(\varphi^{\perp})$. Since $\varphi^{\perp}$ is strongly
 conformal, $\alpha$ is isotropic; also, by Lemma \ref{le:2ffprops}(i),
 $\rank\,\alpha = \rank A''_{\varphi^{\perp}} = \rank A'_{\varphi} = 1$. 
For any nowhere zero (local) section $L$ of $\alpha$, set
$F = \ii\, \ov{L} \times L \big/ \vert L \vert^2$ --- note that this is
well defined under different choices of $L$ --- and set $f = \spa\{F\}$.
By condition (2)(c), $f$ is non-constant.   Further, we have 
 an orthogonal decomposition:
$\varphi = \ov{\alpha} \oplus f \oplus \alpha$.

Now $f$ is orthogonal to $\Ima A''_{\varphi^{\perp}}$, so is contained in
$\ker A'_{\varphi}$\,, hence, $G'(f) \subset \varphi$. But
$\ov{\alpha} = \Ima A'_{\varphi^{\perp}}$ is a holomorphic subbundle of $\varphi$,
so that $A''_{\ov{\alpha},f} = 0$; it follows
that $G'(f) \subset \alpha$.  
Since $f$ is non-constant, $G'(f) = \alpha$,  so we have the decomposition
$\varphi = G''(f) \oplus f \oplus G'(f)$\,.
Further, by Lemma \ref{le:assoc}, $F \times L = \ii L$ showing that $F$ is almost complex.

It is easily checked that the two transformations in \eqref{f-phi} are inverse.
\end{proof}

The above constructions give strongly conformal harmonic maps into $G_2/\SO{4}$ with Gauss bundles of rank $1$. To get ones with Gauss bundles of rank $2$, we adopt a different sort of construction, as follows.

\begin{proposition} \label{pr:constr} Let $F:M \to S^6$ be an almost complex map with image not contained in a totally geodesic $S^2$.
Set $\psi_3 = G^{(1)}(f) \!\times\! G^{(2)}(f)$ and $\psi_{-3} = \ov{\psi}_3$.
Then $\varphi = \psi_{-3} \oplus f \oplus \psi_3$  is a strongly conformal harmonic map $M \to G_2/\SO4$ with\/ $\rank\,G'(\varphi) = 2$.
\end{proposition}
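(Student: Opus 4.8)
The plan is to realize $\varphi$ as the twistor projection of an explicit $J_2$-holomorphic lift into $T_1$, which yields harmonicity and strong conformality at once, and then to read off $G'(\varphi)$ from the diagram \eqref{tau}.

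First I would apply Proposition~\ref{pr:tau}: since the image of $F$ is not contained in a totally geodesic $S^2$ we are in case~(ii), so $F$ determines a $G_2$-flag $\psi=(\psi_{-3},\dots,\psi_3)$ with $\psi_i=G^{(i)}(f)$ for $|i|\le2$, $\psi_0=f$, $\psi_3=G^{(1)}(f)\times G^{(2)}(f)$, $\psi_{-3}=\ov\psi_3$, and such that the only possibly non-zero second fundamental forms $A'_{\psi_i,\psi_j}$ are those shown in \eqref{tau}. The bundle $\varphi=\psi_{-3}\oplus\psi_0\oplus\psi_3$ is real and of rank $3$ (distinct legs), and closed under the vector product: being a $G_2$-flag, $\psi_i\times\psi_j\subset\psi_{i+j}$, and for $i,j\in\{-3,0,3\}$ the index $i+j$ lies in $\{-6,-3,0,3,6\}$ with $\psi_{\pm6}=0$. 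Hence $\varphi$ is associative by Lemma~\ref{le:assoc}, so $\varphi\colon M\to G_2/\SO4$.

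Next I would exhibit the lift. Set $W=\psi_1\oplus\psi_{-2}$, a rank-$2$ subbundle of $\varphi^{\perp}=\psi_{-2}\oplus\psi_{-1}\oplus\psi_1\oplus\psi_2$ with $(W\oplus\ov W)^{\perp}=\varphi$. It is isotropic (its summands are isotropic and, as distinct legs, mutually orthogonal), and complex-coassociative: $\psi_1\times\psi_1=0$ and $\psi_{-2}\times\psi_{-2}=\ov{\psi_2\times\psi_2}=0$ by Lemma~\ref{le:alg-S6}(iii), while $\psi_1\times\psi_{-2}=\psi_1\times\ov\psi_2=\ii\,(\psi_1,\ov\psi_2)F=0$ by Lemma~\ref{le:alg-S6}(i), since $\psi_1\perp\psi_2$. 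Using that $D^{\varphi}_{\zbar}=\nabla''_{\varphi}\oplus\nabla''_{\varphi^{\perp}}$ and $\A=-(A'_{\varphi}\oplus A'_{\varphi^{\perp}})$, together with Lemma~\ref{le:2ffprops}(i), the two conditions of Lemma~\ref{le:J2-descr}($s$=1) become the vanishing of finitely many $A'_{\psi_i,\psi_j}$: holomorphicity of $W$ in $(\CC^7,D^{\varphi}_{\zbar})$ amounts to $A'_{\psi_{-1},\psi_1}=A'_{\psi_2,\psi_1}=A'_{\psi_{-1},\psi_{-2}}=A'_{\psi_2,\psi_{-2}}=0$, and $W\subset\ker\A$ amounts to $A'_{\psi_1,\psi_j}=A'_{\psi_{-2},\psi_j}=0$ for $j\in\{-3,0,3\}$; all of these are absent from \eqref{tau}, the arrows leaving $\psi_1,\psi_{-2},\psi_{-1},\psi_2$ going respectively to $\psi_2,\psi_{-1},\psi_0$ and $\psi_{\pm3}$. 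Thus $W\colon M\to T_1$ is a $J_2$-holomorphic lift of $\varphi$, so $\varphi$ is harmonic; by Proposition~\ref{pr:real-Grass} it is nilconformal with $s(\varphi)\le1$, and as $f\subset\varphi$ is non-constant, $s(\varphi)=1$, i.e.\ $\varphi$ is strongly conformal.

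Finally I would compute $G'(\varphi)=\Ima A'_{\varphi}$. From \eqref{tau}, $A'_{\varphi}$ carries $\psi_0$ into $\psi_1$ and $\psi_{\pm3}$ into $\psi_{-2}$, so $G'(\varphi)\subset\psi_1\oplus\psi_{-2}$, while $\psi_1\subset G'(\varphi)$ since $A'_{\psi_0,\psi_1}=A'_f\neq0$. Using Lemma~\ref{le:2ffprops}(i),(ii) one sees that $A'_{\psi_{-3},\psi_{-2}}$ and $A'_{\psi_3,\psi_{-2}}$ vanish precisely when $A'_{\psi_2,\psi_3}$ and $A'_{\psi_2,\psi_{-3}}$ do, i.e.\ (these being the only arrows out of $\psi_2$) precisely when $\Ima A'_{\psi_2}=G'(G^{(2)}(f))=G^{(3)}(f)$ vanishes. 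The main obstacle is this last point, and it is where the hypothesis is genuinely used: if $G^{(3)}(f)=0$ then the harmonic sequence of $f$ is finite and spans a constant subspace of real dimension $\le5$, confining $F$ to a totally geodesic sphere of dimension $\le4$; but by the classification of almost complex maps into $S^6$ in \cite{bolton-vrancken-woodward}, such a map contained in a proper totally geodesic sphere lies in $S^5$ (and is full there) or in $S^2$, so $F$ would lie in a totally geodesic $S^2$, contrary to hypothesis. Hence $G^{(3)}(f)\neq0$, so $\psi_{-2}\subset G'(\varphi)$ and $G'(\varphi)=\psi_1\oplus\psi_{-2}$ has rank $2$.
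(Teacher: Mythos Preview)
Your proof is correct and follows essentially the same route as the paper's: both invoke the $G_2$-flag $(\psi_{-3},\ldots,\psi_3)$ from Proposition~\ref{pr:tau}, set $W=\psi_1\oplus\psi_{-2}$, verify that $W$ is complex-coassociative via Lemma~\ref{le:alg-S6}, and obtain harmonicity and strong conformality from the $J_2$-holomorphic lift $W:M\to T_1$. The only difference is that your treatment of $\rank G'(\varphi)=2$ is more careful: the paper simply asserts $G'(\varphi)=G'(\psi_0)\oplus G'(\psi_{-3})=W$, whereas you actually justify that the $\psi_{-2}$-component of $G'(\varphi)$ is non-zero by reducing to $G^{(3)}(f)\neq0$ and invoking the classification in \cite{bolton-vrancken-woodward}.
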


\begin{proof}
Let $(\psi_i)$ be the $G_2$-flag \eqref{tau}
generated by $f$, i.e., $\psi_i = G^{(i)}(f)$ \ $(i=-2,\ldots,2)$,
$\psi_3 = \psi_1 \times \psi_2$, $\psi_{-3} = \ov{\psi}_3$.  Set
$\varphi = \psi_{-3} \oplus \psi_0 \oplus \psi_3$ and
$W = \psi_1 \oplus \psi_{-2}$.  By Lemma \ref{le:alg-S6}(i), $\varphi$ is associative, $W$ is complex-coassociative, and $\varphi$ has $J_2$-holomorphic lift
$W:M \to T_1$.  Further, $W = G'(\varphi) = G'(\psi_0) \oplus G'(\psi_{-3})$, which has rank $2$.
\end{proof}

\begin{example} \label{ex:torus}
Let $v_1 \in \l_{\alpha_1}$, $v_2 \in \l_{\alpha_1+\alpha_2}$ have norm $1/\sqrt{2}$ and set $v_3 = \ov{v_1 \times v_2}$\,.  Then $v_3$ also has norm
$1/\sqrt{2}$ and lies in $\l_{-2\alpha_1-\alpha_2}$, and we see that
$v_p \times v_q = \epsilon_{pqr}v_r$ and $v_p \times \ov v_q = (\ii/2)\delta_{pq}L_0$ where $L_0 \in \l_0$ has norm one.    As in
\cite[p.~420]{bolton-vrancken-woodward}, define a map $F:\cn \to S^6$ (a \emph{vacuum solution}, see \cite[\S 3]{burstall-pedit}) by
\be{torus}
F(z)
= \frac{1}{\sqrt{3}}\sum_{j=1}^3 \bigl(\eu^{\mu_j z - \ov{\mu}_j \zbar}v_j
+ \eu^{-\mu_j z + \ov{\mu}_j \zbar}\ov{v}_j \bigr)
\ee
where the $\mu_j$ are the three cube roots of unity.  Then $F$ is doubly periodic of periods $\pi$ and $\ii\pi/\sqrt{3}$, so factors to the torus
$T^2 = \cn/\langle \pi, \ii\pi/\sqrt{3} \rangle$. Further, it is 
 an almost complex map of type (III) into the $S^5$ orthogonal to
$L_0$, with harmonic sequence cyclic of order $6$.  By \cite[Corollary 6.4]{bolton-pedit-woodward}, this map is of
\emph{finite type}; so by \cite[Theorem 8]{pacheco-tori} it
is not of finite uniton number.
Note that the members of the harmonic squence $G^{(i)}(f)$ are spanned by the
$F_i$ defined in the proof of Proposition \ref{pr:tau}; for this particular example, the $F_i$ are equal to the successive derivatives
$\pa^i F/\pa z^i$ \ $(z \in \cn, \ i \in \nn)$.

Applying Proposition \ref{pr:alpha-constr} to $F$ gives nilconformal harmonic maps
$\varphi:T^2 \to G_2/\SO{4}$ by the formula \eqref{alpha-constr}.
Since $F$ is not of finite uniton number, by Remark
\ref{re:fi-uniton-no}, \emph{the maps $\varphi$ are not of finite uniton number
for any choice of\/ $\alpha$}.

The maps $\varphi$ may or may not be of \emph{finite type}.
For example, if $\alpha =  G'(f)$ where $f = \spa\{F\}$ so that
$\varphi$ is given by \eqref{f-phi}, then $\varphi$ \emph{is} of finite type by arguments similar to those in \cite[Theorem 9.2]{correia-pacheco-DPW} and
\cite[\S 4]{pacheco-tori}.
However, \emph{for most choices of $\alpha$, $\varphi$ is not of finite type}.
As a specific example, with $F$ given by \eqref{torus}, note that
$F_1 = \pa F/\pa z$ is a holomorphic section of $G'(f) \oplus G^{(2)}(f)$ and so of $F^{-1}T^{1,0}S^6$; let $Z$ be a meromorphic section of $G'(f) \oplus G^{(2)}(f)$ not lying in $G'(f)$.
Let $\alpha$ be the holomorphic line subbundle of $G'(f) \oplus G^{(2)}(f)$
spanned by $F_1 + b Z$ for some non-constant meromorphic function $b$
on $T^2$.  We may choose $b$ to have its zeros away from any poles of $Z$.
Then $\A(f) = \pi_{\alpha}^{\perp}G'(f)$; this is zero at points where $\alpha$ equals $G'(f)$, in particular, at the zeros of $b$.  It follows that the rank of $\A$ drops
at those points, so that by, for example, \cite[eqn.\ (9)]{pacheco-tori},
the map $\varphi$ cannot be of finite type.
The reader might like to compare this example with \cite[\S 6]{pacheco-tori} where a harmonic torus into quaternionic projective $2$-space $\HP{2}$ which is neither of finite type nor of finite uniton number is constructed.  
\end{example}

\subsection{Using superminimal harmonic maps into $S^6$} \label{subsec:superminimal}

Recall the diagram \eqref{diag:twistor-spaces}.  We define the projection
$ :Q^5 \to S^6$ by $\pi_6(\l) = \ii\, L \times \ov L \big/ |L|^2$ for any nowhere zero section $L$ of $\l$.
The following definition is given by R.~L.~Bryant \cite{bryant}.
\begin{definition} \label{def:superhor-Q5}
A full holomorphic map $h:M \to Q^5$ is called \emph{superhorizontal} if
it satisfies $h \times G'(h) = 0$, equivalently, any local holomorphic section
$H$ of $h$  satisfies $H \times H' = 0$.
\end{definition}  

Setting $\psi_i = G^{(3+i)}(h)$,  we have a map $\wh\psi:M \to F_2 = F_{2,3}^{\rn}$ defined by the formula 
$(\wh\psi_{-2},\wh\psi_{-1},\wh\psi_0,\wh\psi_1,\wh\psi_2)
= (\psi_{-3},\psi_{-2}\oplus\psi_{-1},\psi_0,\psi_1\oplus\psi_2,\psi_3)$.
Differentiating $h \times G'(h) = 0$ gives $h \times G^{(2)}(h) = 0$; it
follows that a full holomorphic map $h:M \to Q^5$ is superhorizontal
if and only if\/ $\wh\psi$ coincides with the unique $G_2$-flag
\eqref{F-filts}$(s$=2$)$ \emph{with $\l = \ov h$}.
Thus, under the identification of $Q^5$ with $\T_2$, 
\emph{a superhorizontal holomorphic map (Definition \ref{def:superhor-Q5})
$h:M \to Q^5$  corresponds to a superhorizontal holomorphic map
(Definition \ref{def:superhor}) $\wh\psi:M \to \T_2$}.
(Note that the harmonic sequence
$\psi =  (\psi_{-3},\psi_{-2},\psi_{-1},\psi_0,\psi_1,\psi_2,\psi_3)$
defines another superhorizontal map, this time into $\T_3$.) 

Let $F:M \to S^6$ be an almost complex map of type (I) (i.e., full and superminimal).   Then its
harmonic sequence \eqref{ha-seq} is a $G_2$-flag, in particular, $G^{(-3)}(f) \times G^{(-2)}(f) = 0$.  It follows that the first leg
$h = G^{(-3)}(f)$ of this flag is a full superhorizontal holomorphic map into
$Q^5$.  We are led to the following result of
Bryant \cite{bryant}, as described in \cite{bolton-woodward-campinas}.

\begin{theorem} \label{th:super-corr} There is a one-to-one correspondence between
\begin{enumerate}
\item full superhorizontal holomorphic maps $h:M \to Q^5$, and
\item almost complex maps $F:M \to S^6$ of type (I),
\end{enumerate}
given by $F = \ii\,\ov H \times H\big/|H|^2$, where $H$ is any (local) section of $h$, with inverse 
$h = G^{(-3)}(f)$ where $f = \spa\{F\}$.
\end{theorem}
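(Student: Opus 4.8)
The plan is to establish the two directions of the correspondence separately and then check that the two maps are mutually inverse. In one direction I start with a full superhorizontal holomorphic map $h:M \to Q^5$ and set $F = \ii\,\ov H \times H\big/|H|^2$ for a local section $H$ of $h$; first I would verify that this is independent of the choice of $H$ (it scales correctly under $H \mapsto \lambda H$), so $F$ is globally defined, and that $|F| = 1$ by Lemma \ref{le:alg-S6}(i) together with isotropy of $h$. Then, using the discussion preceding the theorem, the superhorizontality condition $h \times G'(h)=0$ and its derivative $h \times G^{(2)}(h)=0$ identify $\wh\psi$ with the $G_2$-flag \eqref{F-filts}$(s$=$2)$ with $\ell = \ov h$; in particular the harmonic sequence $(\psi_{-3},\ldots,\psi_3)$ with $\psi_i = G^{(3+i)}(h)$ is a $G_2$-flag, hence $\pi_0$ of it is a map into $G_2/\SO4$. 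I would then recognise $f = \spa\{F\} = \psi_0 = G^{(3)}(h)$ as the middle leg, so that $F$ is (up to sign/orientation) the twistor projection $\pi_6$ of a superhorizontal holomorphic map, and by \S\ref{subsec:superhor} its twistor projection into $S^6$ is superminimal; fullness of $F$ follows from fullness of $h$ since the seven legs span $\cn^7$. The almost-complex condition $F \times (\text{section of }T^{1,0}_FS^6) \subset \ii\,T^{1,0}_FS^6$ is read off from the $G_2$-flag multiplication table exactly as in the proof of Proposition \ref{pr:tau} (see \eqref{F1}).

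In the other direction I start with an almost complex map $F:M \to S^6$ of type (I), i.e.\ full and superminimal. By the paragraph before the theorem, its harmonic sequence \eqref{ha-seq} is a $G_2$-flag, so in particular $G^{(-3)}(f) \times G^{(-2)}(f) = 0$, which says precisely that the first leg $h = G^{(-3)}(f)$ is a superhorizontal holomorphic map into $Q^5$ in the sense of Definition \ref{def:superhor-Q5}; fullness of $h$ is equivalent to fullness of $F$ since the Gauss bundles of a full superminimal map span $\cn^7$. Here I should note that $h$ is genuinely a holomorphic map into $Q^5$: it is isotropic because $f$ is superminimal (any two Gauss bundles are orthogonal), and it is holomorphic because $G^{(-3)}(f) = G''(G^{(-2)}(f))$ sits at the bottom of the $\pa''$-Gauss sequence, equivalently it is the kernel-type bottom leg, which is antiholomorphic for $A''$ hence holomorphic for $D^{\varphi}_{\zbar}$ — I would spell this out using Lemma \ref{le:2ffprops} and the harmonic-sequence formalism of \S\ref{subsec:Grass}.

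Finally I would check that the two assignments are mutually inverse. Starting from $h$, forming $F = \ii\,\ov H \times H\big/|H|^2$ and then taking $G^{(-3)}(f)$: since the legs $\psi_i = G^{(3+i)}(h)$ satisfy $\psi_i = \spa\{\Psi_i\}$ with $\Psi_i$ built from the multiplication table, and $f = \psi_0$, the bottom leg $G^{(-3)}(f) = \psi_{-3} = \ov{\psi_1 \times \psi_2}$; but the relations $\Psi_1 \times \Psi_2 = \Psi_3$, $\Psi_{-3} = \ov\Psi_3$ together with $\psi_{\pm 3}$ being reconstructed from $H$ show $\psi_{-3} = h$. (Concretely, writing $H$ as a section of $h = \psi_{-3}$, one has $F$ in $\ell_0$-position and $H$ recovered as the appropriate $(1,0)$-extreme Gauss bundle.) Conversely, starting from $F$ of type (I), setting $h = G^{(-3)}(f)$ and then $\ii\,\ov H \times H\big/|H|^2$ returns $F$: this is exactly formula \eqref{Psi3} read in the other order, $\ov\Psi_{-3} \times \Psi_{-3}$ being proportional to $F$ by Lemma \ref{le:alg-S6}(i) applied in the $G_2$-flag. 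I expect the main obstacle to be the bookkeeping in the last step — keeping track of which Gauss bundle/leg equals which, and checking that no spurious scalar or conjugation sneaks in — rather than any conceptual difficulty; all the algebraic input (the $S^6$ vector-product identities, the $G_2$-flag structure, the identification of $\wh\psi$ with \eqref{F-filts}$(s$=$2)$) has already been assembled above.
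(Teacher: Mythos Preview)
The paper does not give a self-contained proof of this theorem: it is stated as a result of Bryant (with reference to \cite{bryant} and \cite{bolton-woodward-campinas}), and only the $F \to h$ direction is sketched in the paragraph immediately preceding the statement. Your proposal is correct and assembles both directions from exactly the ingredients the paper has laid out (the $G_2$-flag structure of the harmonic sequence, the identification of $\wh\psi$ with \eqref{F-filts}($s$=2), Lemma \ref{le:alg-S6}, and Proposition \ref{pr:tau}); in that sense your approach coincides with the paper's implicit argument.

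Two small clean-ups. First, your appeal to Lemma \ref{le:alg-S6}(i) for $|F|=1$ is slightly circular as written, since that lemma presupposes a point $F \in S^6$; what you actually need is the direct octonionic computation $|\ov H \times H|^2 = |H|^4$ from \eqref{vp-assoc} using only $(H,H)=0$ --- this is the same identity, just derived without reference to $F$. Second, the mutual-inverse check is simpler than you make it: once you have identified $f = \spa\{F\}$ with $\psi_0 = G^{(3)}(h)$ (via $\psi_{-3} \times \psi_3 \subset \psi_0$), the relation $G^{(-3)}(f) = h$ is immediate from the harmonic sequence, since for a full holomorphic curve $G^{(-3)}\bigl(G^{(3)}(h)\bigr) = h$; there is no need to route through $\Psi_3 = \Psi_1 \times \Psi_2$ and the multiplication table. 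The converse composition is likewise just Lemma \ref{le:alg-S6}(i) applied at $F$ with $\alpha = \beta$ a section of $G^{(3)}(f) \subset T_F^{1,0}S^6$.
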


\begin{remark}
(i) The harmonic sequence of $f$ is given by \eqref{ha-seq} with $G^{(i)}(f) = G^{(i+3)}(h)$.

(ii) Multiplication tables for the sections $F_i$ of this harmonic sequence defined by \eqref{Fi} are given in
\cite[Proposition 2]{martins} and \cite[Corollary 3.2]{fernandez}.

(iii)  For examples of full superhorizontal holomorphic maps
$h:S^2 \to Q^5$ given by simple polynomial formulae, see \cite{fernandez,martins}.
\end{remark}

 We can obtain harmonic maps into $G_2/\SO{4}$ from such $h$ as follows.

\begin{proposition} \label{prop:i=123}
Let $h:M \to Q^5$ be a full superhorizontal holomorphic map and let $F$ be the corresponding almost complex map,
as described in Theorem \ref{th:super-corr}.
Then $\varphi = G^{(-i)}(f) \oplus f \oplus G^{(i)}(f)$ is a harmonic map into 
$G_2/\SO{4}$ for $i = 1,2,3$.   Further, $\varphi$ is strongly conformal
$($i.e. $s(\varphi) = 1)$ for $i = 1,3$, but $s(\varphi) = 3$ for $i=2$.
\end{proposition}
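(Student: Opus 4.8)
The plan is to deduce the cases $i=1$ and $i=3$ from results already proved, and to handle $i=2$ directly.

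For $i=1$: a type (I) map is full in $S^6$, so its image lies in no totally geodesic $S^2 = \Pi^3 \cap S^6$; hence $\varphi = G^{(-1)}(f)\oplus f\oplus G^{(1)}(f) = G''(f)\oplus f\oplus G'(f)$ is exactly the map furnished by Theorem \ref{th:one-one}, which is already known to be a strongly conformal harmonic map into $G_2/\SO{4}$, so $s(\varphi)=1$. For $i=3$: the harmonic sequence \eqref{ha-seq} of a type (I) map is a $G_2$-flag, so $G^{(3)}(f) = G^{(1)}(f)\times G^{(2)}(f)$ and $G^{(-3)}(f) = \ov{G^{(3)}(f)}$; thus $\varphi = G^{(-3)}(f)\oplus f\oplus G^{(3)}(f)$ is precisely the map of Proposition \ref{pr:constr}, which is therefore a strongly conformal harmonic map into $G_2/\SO{4}$ (with $\rank G'(\varphi)=2$), so again $s(\varphi)=1$.

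For $i=2$ I would argue as follows. Write $\psi_j = G^{(j)}(f)$ \ ($-3\leq j\leq 3$); these are mutually orthogonal line subbundles with $\CC^7 = \bigoplus_{j=-3}^{3}\psi_j$, and $(\psi_{-3},\dots,\psi_3)$ is the harmonic sequence \eqref{ha-seq}, which for a type (I) map is a superhorizontal $G_2$-flag (Definition \ref{def:G2-flag}), hence $J_1$- and $J_2$-holomorphic as a map $M\to\T_3\cong T_3$. Its twistor projection combines the even legs, $\pi_0(\psi) = \psi_{-2}\oplus\psi_0\oplus\psi_2$, which is exactly $\varphi$ for $i=2$; so by Propositions \ref{pr:pi-twistor} and \ref{pr:real-Grass}, $\varphi$ is a nilconformal harmonic map into $G_2/\SO{4}$ with $s(\varphi)\leq 3$. (Alternatively, associativity of $\varphi$ follows at once from $F\times F_2 = \ii F_2$ of \eqref{F2} together with Lemma \ref{le:assoc}(a)(v).) It then remains to pin down $s(\varphi)$, and this is where the work lies. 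Using $\A = -A'_{\varphi} - A'_{\varphi^{\perp}}$ on $\CC^7 = \bigoplus\psi_j$ and the fact that, in the harmonic sequence, $\pa_z$ carries $\Gamma(\psi_j)$ into $\Gamma(\psi_j\oplus\psi_{j+1})$ with $\psi_{j+1}$-component $A'_{\psi_j,\psi_{j+1}}$ a non-zero bundle morphism for $-3\leq j\leq 2$ (here fullness of $F$ is used), one checks that every consecutive pair $(\psi_j,\psi_{j+1})$ \ ($-3\leq j\leq 2$) has exactly one member in $\varphi = \psi_{-2}\oplus\psi_0\oplus\psi_2$ and the other in $\varphi^{\perp}$, so that $\A$ maps $\psi_j$ onto $\psi_{j+1}$ for $-3\leq j\leq 2$ and $\psi_3$ to $0$; that is, $\A$ acts as the length-six shift $\psi_{-3}\to\psi_{-2}\to\cdots\to\psi_3\to 0$. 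Consequently $(\A)^2(\varphi) = \psi_0\oplus\psi_2\neq 0$, so $s(\varphi)\geq 2$; and $(\A)^4(\varphi^{\perp}) = (\A)^4(\psi_{-3}\oplus\psi_{-1}\oplus\psi_1\oplus\psi_3) = \psi_1\oplus\psi_3\neq 0$, so $s(\varphi)\geq 3$; while $(\A)^6(\varphi) = 0$. Hence $s(\varphi)=3$.

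The main obstacle is precisely this last computation for $i=2$: one must verify that no two consecutive legs of the harmonic sequence lie on the same side of the splitting $\varphi\oplus\varphi^{\perp}$ (so that all six shift-arrows of $\A$ survive) and that the iterated images above are genuinely non-zero — both of which rest on $F$ being full, so that each $A'_{\psi_j,\psi_{j+1}}$ \ ($-3\leq j\leq 2$) is non-trivial. By contrast, for $i=1$ and $i=3$ consecutive legs repeatedly lie on the same side, so $\A$ is already $2$-step nilpotent and $(\A)^2(\varphi)=0$, consistent with $s(\varphi)=1$ there; this also gives an independent check of the two quoted results.
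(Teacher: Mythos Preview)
Your proof is correct and follows essentially the same approach as the paper: for $i=1,3$ you invoke the same results (Theorem~\ref{th:one-one} and Proposition~\ref{pr:constr}), and for $i=2$ you use the superhorizontal harmonic sequence as a $J_2$-holomorphic lift into $\T_3$ to get harmonicity and $s(\varphi)\le 3$, then compute $(\A)^4(\varphi^{\perp})\supset(\A)^4(\psi_{-3})=\psi_1\neq 0$ to force $s(\varphi)=3$, exactly as the paper does. Your version is more explicit (spelling out the full shift action of $\A$ and the independent check that $(\A)^2(\varphi)=0$ for $i=1,3$), but the logic is identical.
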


\begin{proof}
For $i=1$, this is implied by Theorem \ref{th:one-one}.

For $i=2$, we have $\varphi = \pi_3 \circ \psi$ where $\psi = (\psi_{-3},\ldots,\psi_3):M \to \T_3$ is the harmonic sequence of $h$.  Since this is certainly
$J_2$-holomorphic, $\varphi$ is harmonic.  That $s(\varphi) = 3$ follows from
noting that $(\A)^4(\varphi^{\perp})$ contains $(\A)^4(\psi_{-3}) = \psi_1$, and so
is non-zero.	
 
For $i=3$, this is as a special case of Proposition \ref{pr:constr}. 
\end{proof}

\begin{remark} \label{re:two-lifts}  When $i=3$, $\varphi$ has the
$J_2$-holomorphic lift 
$(\psi_{-1} \oplus \psi_2, \varphi, \psi_1 \oplus \psi_{-2})$ into $\T_1$ as in Proposition \ref{pr:constr}; it also has the superhorizontal holomorphic lift
$(\psi_{-3}, \psi_{-2} \oplus \psi_{-1}, \psi_0, \psi_1 \oplus \psi_2, \psi_3)$
into $\T_2$, cf.\ Lemma \ref{le:uniqueness} and the remark following it.
\end{remark}

The above construction does not give harmonic maps with
$s(\varphi) = 2$.  To do this, we use Proposition \ref{pr:alpha-constr}
as follows.

\begin{example} \label{ex:s2}
Let $h:M \to Q^5$ be a full superhorizontal holomorphic map
 and let $F$ be the corresponding almost complex map.  Set $f = \spa\{F\}$, and let $\alpha$ be a holomorphic subbundle of $f^{\perp}$ which lies in
$T^{1,0}_f S^6 = h \oplus G^{(4)}(h) \oplus G^{(5)}(h) $ but is not contained in
either $h$ or $G^{(4)}(h) \oplus G^{(5)}(h)$.  Then the harmonic map defined by
\eqref{alpha-constr} has $s(\varphi) = 2$.

To see this, we may check that $(\A)^4(\varphi^{\perp}) = 0$.  Also
$(\A)^2(\varphi) \neq 0$, equivalently $G'(\varphi)$ is not isotropic:
indeed, it contains $A'_{\varphi}(\alpha)$ which has non-zero components in both
$G^{(-2)}(f)$ and $G^{(2)}(f)$ and so cannot be isotropic.

In fact, $\ov\alpha$ is a holomorphic subbundle of 
$\varphi$ lying in $\ker\A$ so that $\ov\alpha:M \to Q_5$ defines a
$J_2$-holomorphic lift of $\varphi$.   The corresponding flag
$\psi = (\psi_{-2}, \psi_{-1},\psi_0,\psi_1,\psi_2)$ has $\psi_2 = \ov\alpha$, $\psi_1 = G^{(2)}(f) \oplus \bigl\{\alpha^{\perp} \cap \bigl(G^{(-3)}(f) \oplus G^{(1)}(f)\bigr)\!\bigr\}$, $\psi_0 = f$.
It can be checked that $\psi$ is a $G_2$-flag with derivative $\psi_z$ in \eqref{hor-vert}, so defines a $J_2$-holomorphic lift into $\T_2$.  It is not a superhorizontal lift since, for example, $A'_{\psi_{-2},\psi_1}$ is non-zero.
See Example \ref{ex:s2-cont} for more on this example.
\end{example}

\section{Harmonic maps of finite uniton number} \label{sec:finite-uniton}
This section assumes some familiarity with the concepts of extended solution,
adding a uniton (also called `flag transform'),
finite uniton number, `the additional $S^1$-action' of C.-L. Terng as defined by Uhlenbeck \cite{uhlenbeck}, and Segal's Grassmannian model \cite{segal};
see \cite{J2} for a summary of what is needed here. In particular, a harmonic map
$\varphi:M \to G$ to a Lie group $G \subset \U{n}$
is said to be of \emph{finite uniton number}
if it has a polynomial extended solution $\Phi:M \to \Omega G$ into
the loop group of $G$ with $\Phi_{-1} = g\varphi$ for some $g \in G$.   Uhlenbeck \cite{uhlenbeck} shows that any polynomial extended solution
$\Phi$ has a factorization into unitons with values in $\Omega\U{n}$.
By replacing $\varphi$ by $g\varphi$, we shall assume that $g=e$, i.e., that $\Phi_{-1} = \varphi$.  

\subsection{Superhorizontal maps again} \label{subsec:superhor2} 
Let $\varphi:M \to G_2/\SO{4}$
be the projection of a superhorizontal holomorphic map
$\psi:M \to T_s$ \ $(s \in \{1,2,3\})$; as in \S \ref{subsec:superhor}, this is
given by the legs of \eqref{F-filts}, with $\ov\l$ and $\ov D$ holomorphic subbundles of $(\CC^7,\pa_{\zbar})$.  Then $\varphi$ is harmonic and an extended
 solution $\Phi$ of $\varphi$ can be read off from
\eqref{F-filts}, namely, in terms of the Grassmannian model $\W := \Phi\HH_+$\,,
\be{S1-limits}
\left. \begin{array}{rl}
(s = 1) \quad &
\W = \lambda^{-1} \ov W + W^{\perp} + \lambda \HH_+\,,
\\
(s = 2) \quad &
\W = \lambda^{-2} \ov{\l} + \lambda^{-1}\ov{\l}^a
	+ (\l^a)^{\perp} + \lambda \l^{\perp} + \lambda^2 \HH_+\,,
\\
(s = 3) \quad & \W = \lambda^{-3} \ov\l +
\lambda^{-2}\ov D 	+ \lambda^{-1}\ov{\l}^a + (\l^a)^{\perp} +
\lambda D^{\perp} 	+ \lambda^2 \l^{\perp} + \lambda^3 \HH_+\,.
\end{array} \right\}
\ee

\noindent 
According to Correia and Pacheco \cite{correia-pacheco-G2}, all $S^1$-invariant extended solutions
are given by this formula, so that \emph{a harmonic map $\varphi:M \to G_2/\SO{4}$ has a superhorizontal
holomorphic lift $\psi:M \to F_s$ if and only if it has an $S^1$-invariant extended solution}.
In fact, $\psi$ is the \emph{canonical lift} (see below)  of $\varphi$ defined by the extended solution.

\subsection{General harmonic maps of finite uniton number}
According to \cite{correia-pacheco-G2}, a harmonic map from a Riemann surface to
$G_2/\SO{4}$ of finite uniton number has an extended solution
$\Phi: M \to \Omega G_2$
with $\W = \Phi\HH_+$ satisfying (i) $\ov \W^{\perp} = \lambda \W$, (ii)
$\W$ is closed under the vector product $\times$, (iii) $\W$ satisfies
\be{W-limits}
\lambda^s \HH_+ \subset \W \subset \lambda^{-s} \HH_+
\ee
for some $s \in \{1,2,3\}$.
(In (ii), we have extended the vector product on $\cn^7$ to
$\HH = \sum_{i \in \zn} \lambda^i \cn^7$ by setting
$\lambda^i v \times \lambda^j w = \lambda^{i+j} v \times w$ \ $(v,w \in \cn^7, \ 
i,j \in \zn$).)

Condition (i) says that $\Phi$ takes values in the loop group $\Omega\SO{7}$, (ii) says that those values are actually in $\Omega G_2$, and (iii) follows from the assumption of finite uniton number.  We take the least $s$ for which
\eqref{W-limits} holds; then the \emph{$S^1$-invariant limit} \cite[\S 3.3]{unitons}, is given by \eqref{S1-limits} 
\cite{correia-pacheco-G2}.

Now let $\varphi:M \to \U{n}$ be a harmonic map of finite uniton number with an extended solution $\Phi$.  As above, set $\W = \Phi\HH_+$ and $s$ be the
least integer such that \eqref{W-limits} holds. Then (cf.\ \cite[\S 3]{unitons}) $\Phi$ gives an isomorphism
$\CC^7 \cong \HH_+/\lambda\HH_+ \to \W/\lambda\W$.
Noting that $\W \cap \lambda^{s+1}\HH_+ = \lambda^{s+1}\HH_+$ is
contained in $\lambda \W$, the filtration $\W = \W \cap \lambda^{-s}\HH_+ \subset \W \cap \lambda^{-s+1}\HH_+ \subset \cdots \subset \W \cap \lambda^s\HH_+  \subset \W \cap \lambda^{s+1}\HH_+$ descends to
a filtration of $\W/\lambda\W$ whose legs we denote by $A_i$, thus
$\W/\lambda\W = \sum_{i=-s}^s A_i$.
Then the \emph{canonical lift \cite[Theorem 4.8]{J2} of $\varphi$  defined by $\Phi$} is the map $\psi$ into a flag manifold with legs
$\psi_i = \Phi^{-1}(A_i)$. We can apply this to our situation as follows.

\begin{theorem} \label{th:fi-uniton}
Let $\varphi:M \to G_2/\SO{4}$ be a harmonic map of finite uniton number.
Let $\Phi: M \to G_2$ be an extended solution for $\varphi$ and let
$s \in \{1,2,3\}$ be the least integer such that $\W = \Phi\HH_+$
satisfies \eqref{W-limits}.
Then the canonical lift $\psi$ of\/ $\varphi$ defined by $\Phi$
gives a $J_2$-holomorphic lift of $\varphi$ into $\T_s$.
In particular, $\varphi$ is nilconformal with $s(\varphi) \leq s$.
\end{theorem}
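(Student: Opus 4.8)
The plan is to obtain the lift directly from the general canonical‑lift construction of \cite[\S 4.2]{J2} (see also \cite[\S 3]{unitons}) and then to verify the one extra property needed to land in $\T_s$, namely the $G_2$-flag condition, using that $\W$ is closed under the vector product. For \emph{any} harmonic map of finite uniton number into $\U n$ with extended solution $\Phi$, that construction produces a $J_2$-holomorphic map $\psi = (\psi_i)$, $\psi_i = \Phi^{-1}(A_i)$, into the relevant complex flag manifold with $\pi_e \circ \psi = \varphi$. Property (i), $\ov\W^{\perp} = \lambda\W$, forces the reality relations $\psi_{-i} = \ov\psi_i$, so $\psi$ in fact takes values in a real flag manifold $F^{\rn}_{d_s,\ldots,d_0}$ and $\varphi = \pi_0^{\rn}\circ\psi = \wt\pi_0^{\rn}\circ\psi$. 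Since $\varphi$ maps into $G_3(\rn^7)$, the constraints $d_0 + 2(d_1+\cdots+d_s) = 7$ and $\sum_{i>0,\ i \text{ odd}} d_i = 2$ from \S\ref{subsec:twistor-excep} hold, and the minimality of $s$ in \eqref{W-limits} gives $A_i = 0$ for $|i| > s$; hence $F^{\rn}_{d_s,\ldots,d_0} = F_s$, with $s$ the integer in the statement.

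It remains to show that $\psi$ is a $G_2$-flag, i.e., $\psi_i \times \psi_j \subset \psi_{i+j}$ for all $i,j \in \zn$. By hypothesis $\Phi$ takes values in $\Omega G_2$; equivalently (property (ii)) $\W$ is closed under the vector product on $\HH = \sum_k \lambda^k \cn^7$, where $\lambda^a v \times \lambda^b w = \lambda^{a+b}(v\times w)$. Since $\lambda^a\HH_+ \times \lambda^b\HH_+ \subset \lambda^{a+b}\HH_+$ and $\W \times \lambda\W = \lambda(\W\times\W) \subset \lambda\W$, the vector product descends to $\W/\lambda\W$ and respects the filtration given by the images of the subspaces $\W \cap \lambda^c\HH_+$; transporting this along the isomorphism $\cn^7 = \HH_+/\lambda\HH_+ \to \W/\lambda\W$ induced by $\Phi$ — which intertwines the two induced vector products because $\Phi(x) \in G_2$ for every $x$ — yields a one-sided inclusion of the shape $\psi_i \times \psi_j \subset \sum_{k \geq i+j} \psi_k$. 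Conjugating and using $\psi_{-i} = \ov\psi_i$ converts this into the opposite inclusion $\psi_i \times \psi_j \subset \sum_{k \leq i+j}\psi_k$; intersecting the two gives $\psi_i \times \psi_j \subset \psi_{i+j}$. Thus $\psi$ is a $G_2$-flag, so $\psi:M \to \T_s$.

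By Propositions \ref{pr:T-and-T} and \ref{pr:preserve-J2}, the $J_2$-holomorphic map $\psi:M \to \T_s$ corresponds to a $J_2$-holomorphic lift of $\varphi$ into the twistor space $T_s$, which is the first assertion. For the last statement, if $\varphi$ is constant there is nothing to prove, and otherwise Proposition \ref{pr:real-Grass}, applied to the $J_2$-holomorphic map $\psi:M \to F^{\rn}_{d_s,\ldots,d_0}$ with non-constant twistor projection $\varphi$, shows $\varphi$ is nilconformal with $s(\varphi) \leq s$. I expect the only genuine work to be in the $G_2$-flag step: pinning down the index conventions so that the filtration on $\W/\lambda\W$ matches the flag $(\psi_i)$ of $\cn^7$, and so that the two one-sided inclusions combine as stated; the remaining ingredients are all quoted from results already established above.
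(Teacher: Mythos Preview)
Your proposal is correct and follows essentially the same approach as the paper: both arguments reduce the theorem to verifying the $G_2$-flag condition $\psi_i \times \psi_j \subset \psi_{i+j}$, and both obtain it by the identical two-sided inclusion trick, namely using closure of $\W$ under $\times$ to get $\psi_i \times \psi_j \subset \sum_{k \geq i+j}\psi_k$, then conjugating via $\psi_{-i} = \ov\psi_i$ to get the reverse inclusion. The only minor difference is that the paper pins down the dimensions of the legs (and hence that the target is exactly $F_s$) by appealing to the $S^1$-invariant limit \eqref{S1-limits}, whereas your dimension-constraint argument alone does not quite determine all the $d_i$; however, once the $G_2$-flag condition is established, Proposition~\ref{pr:Z-char} forces the correct dimensions anyway.
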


\begin{proof}
  Since $\W$ is closed  under the vector product $\times$,
the latter descends to $\W/\lambda\W$.
 Then $\W/\lambda\W = \sum_{i=-s}^s A_i$ as above and, by
\cite[Proposition 6.6]{J2}, $\psi_{-i} = \ov \psi_i$, equivalently, $A_{-i} = \ov A_i \ 
\forall\, i$.  Note that the $A_i$ are non-zero and give the legs of the standard filtration \eqref{F-filts} in the $S^1$-invariant limit.  

In the general case, $\psi$ is a $G_2$-flag giving a lift into $\T_s$ if and only if
$A_i \times A_j \subset A_{i+j}$ (where we set $A_i = 0$ for $|i| > s$).
But $\W$ is closed under the vector product, so
$(\W \cap \lambda^i\HH_+) \times (\W \cap \lambda^j\HH_+) \subset 
\W \cap \lambda^{i+j}\HH_+$\,.  On applying the projection $\pi:\W \to \W/\lambda W$,
we deduce that $A_i \times A_j \subset \sum_{k \geq i+j}A_k$.
Replacing $i$, $j$ by $-i$, $-j$ and then taking the conjugate yields
$A_i \times A_j \subset \sum_{k \leq i+j}A_k$, hence $A_i \times A_j \subset A_{i+j}$ as required.
\end{proof}

\begin{remark} (i) It can be shown that $s(\varphi)$ \emph{is equal to} $s$ unless
$s=2$ when some special maps $\varphi$ have $s(\varphi) =1$,
see Remark \ref{re:two-lifts}.

(ii) If $\Phi$ is $S^1$-invariant, the canonical lift of $\varphi$ defined by $\Phi$ is given by the legs of \eqref{S1-limits} and is superhorizontal.
\end{remark}

\begin{example} \label{ex:s2-cont} 
Let $\varphi:M \to G_2/\SO{4}$ be the harmonic map with $s(\varphi) =2$
constructed in Example \ref{ex:s2}.
We show that $\varphi$ has uniton number $4$.  Indeed, starting with an extended solution for $f$ and multiplying by the two unitons $\alpha$ and $\ov\alpha$ shows that $\varphi$ has an extended solution $\Phi: M \to G_2$ given as a product of factors with values in $\Omega \SO{7}$ by
\be{ext-sol-s2}
\Phi = (\lambda^{-1}\pi_A + \pi_f + \lambda \pi_{\ov A})
			(\lambda^{-1}\pi_{\alpha} + \pi_{\alpha \oplus \ov\alpha}^{\perp} 	+ 		\lambda\pi_{\ov\alpha})
\ee
where $A = h_{(2)} = G^{(-3)}(f) \oplus G^{(-2)}(f) \oplus G^{(-1)}(f)$.

Note that $\lambda^2\Phi$ is a polynomial of degree exactly $4$ with
leading term $T_0 = \pi_A \circ \pi_{\alpha}$; this has image equal to $h$ which is full so that $\Phi$ is of \emph{type one} (see \cite[Remark 3.19]{unitons}).
This means that the \emph{(minimal) uniton number} of
$\varphi$ \cite[\S 13]{uhlenbeck} is
 \emph{exactly} $4$.  Further, since $\pi_{\alpha}$ and $\pi_A$ do not commute,
$\Phi$ is not $S^1$-invariant.

Let $H$ be a meromorphic section of $h$ and define $H_i$ iteratively by $H_0=H$,
$H_i = A'_{G^{(i-1)}(h)}(H_{i-1})$, so that $H_i$ is a holomorphic section
of $G^{(i)}(h)$. 
Let $\alpha $ be generated by $H_0 + t(a H_4 + b H_5)$ where $t$ is a non-zero
complex number (which we may take to be $1$), and 
$a$ and $b$ are meromorphic functions, not both zero.  In terms of
the Grassmannian model
$\W = \Phi\HH_+$, it can be checked that $\W/\lambda \W$ is generated by 
\be{basis-Ws2}
\lambda^{-2}H_0 +t(a H_4 +b H_5), \
\lambda^{-1}H_1 +t\lambda b H_6, \ \lambda^{-1}H_2 - \lambda t a H_6, \
H_3, \ \lambda H_4, \ \lambda H_5, \ \lambda^2 H_6\,.
\ee
Letting $t \to 0$, shows that the $S^1$-invariant limit is the type one
 extended solution of
$\varphi_0 = h \oplus G^{(3)}(h) \oplus G^{(6)}(h) = G^{(-3)}(f) \oplus f \oplus G^{(3)}(f)$;
this is the harmonic map of Proposition \ref{prop:i=123}
with $i = 3$; note that $\varphi_0$ has (minimal) uniton number $4$, which is equal to that of $\varphi$, however, \emph{$s(\varphi_0) = 1 < s(\varphi) =2$}.

Writing $(\lambda^2\Phi)^{-1} = S_0 + \lambda^{-1}S_1 + \cdots$,
we see that $S_0 = \pi_{\alpha} \circ \pi_A$. {}From \cite[Proposition 2.8(ii)]{unitons} the last `Uhlenbeck' uniton $\gamma_4$ in the alternating factorization \cite[Example 4.5 and \S 6.1]{unitons} is given by $\gamma_4 = \Ima S_0 = \alpha$,
so that \eqref{ext-sol-s2}
is the alternating factorization of $\Phi$. 
\end{example}

\begin{example} \label{ex:s=2} Again, let $h:M \to Q^5$ be a full superhorizontal 
holomorphic map, and let $b$ be a meromorphic function on $M$ which is not
 identically zero.  For any complex number $t$ and any local holomorphic section
$H$ of $h$ set $X = \lambda^{-3} \spa\{H + t \lambda^4 b H^{(5)}\}$
where $H^{(i)}$ denotes the $i$th derivative of $H$.
As in Guest \cite{guest-update}, set 
\be{W}
\W = X + \lambda X_{(1)} + \lambda^2 X_{(2)} + \cdots + \lambda^5 X_{(5)}
	+ \lambda^6 \HH_+
\ee
where $X_{(i)}$ denotes the subbundle of  spanned by the local holomorphic sections of $X$ and their derivatives  of order up to $i$.  

We show that $\W$ is globally well defined on $M$.  Indeed,
define the $H_i$ as in the last example.  Then it is not hard to see that
$\W/\lambda \W$ is spanned by
\be{basis-Ws3}
\lambda^{-3}(H + t \lambda^4 b H_5), \ \lambda^{-2}(H_1 + t \lambda^4 b H_6), \ \lambda^{i-3}H_i \ (i=2,\ldots,6).
\ee
Thus $\W$ is globally well defined, so there is an extended solution
$\Phi:M \to \Omega_{\alg}(\U{n})$ with $\W = \Phi\HH_+$\,. 

It can be checked that $\W$ satisfies the  condition
(i) $\ov{\W}^{\perp} = \lambda \W$.  Further, (ii) $\W$ is closed under the vector product; indeed, it can quickly be checked from the the multiplication table
for the $H_i$ in in \cite{martins} or \cite{fernandez}, that the multiplication table for the basis elements \eqref{basis-Ws3}
 in $\W/\lambda \W$ is isomorphic to that for
$\lambda^{i-3}H_i$ \ $(i = 0,\ldots,6)$.  (Similar remarks apply to the last example.)

It can be checked that the alternating factorization of the extended solution
$\Phi$ is
\[
\Phi \!=\! (\lambda^{-1} \pi_{g_{(2)}} + \pi_{\{g_{(2)} \oplus \ov{g_{(2)}}\}}^{\perp} + \lambda\pi_{\ov{g_{(2)}}})
	(\lambda^{-1} \pi_{g_{(1)}} + \pi_{\{g_{(1)} \oplus \ov{g_{(1)}}\}}^{\perp} + \lambda\pi_{\ov{g_{(1)}}}) 
	(\lambda^{-1}\pi_{\gamma_6} + \pi_{\gamma_6 \oplus \ov\gamma_6}^{\perp} +
\lambda \pi_{\ov\gamma_6})
\]
where $\gamma_6$, the last Uhlenbeck uniton, is spanned by
$H + tbH_5$.  
Again, we see this is the type one extended solution of $\varphi$, so that
the (minimal) uniton number of $\varphi$ is precisely $6$.
 
As $t \to 0$, $\Phi$ tends to the type one $S^1$-invariant extended solution
of the harmonic map $\varphi_0:G^{-2}(f) \oplus f \oplus G^{2}(f):M \to G_2/\SO{4}$
of Theorem 5.14($i=2$) where $f = G^{(3)}(h)$.  Note that $\varphi_0$ has
(minimal) uniton number $6$, which is equal to that of $\varphi$ and, in contrast to the last example, $s(\varphi_0) = s(\varphi) \ (=3)$.
\end{example}

\begin{remark}
We can describe the extended solutions $\Phi$ in the last two examples in terms of
the construction in \cite{burstall-guest} as follows. For Example
\ref{ex:s2-cont}, let $D$ be the discrete set where \eqref{basis-Ws2}
fails to be a basis.  Let $A:M \setminus D \to \Lambda^+_{\alg} \U{n}^{\cc}$ be given by the matrix with columns \eqref{basis-Ws2}.
Let $\{\xi_1,\xi_2\}$ be dual to the simple roots $\{\alpha_1,\alpha_2\}$ (see below).
Then $\Phi$ is given by multiplying $A$ by the
closed geodesic $\gamma_{\xi_1}: \lambda = \eu^{\ii t} \mapsto \exp(t\xi_1)$
and performing an Iwasawa deomposition.  A similar description holds for Example
\ref{ex:s=2}, this time setting $A$ equal to the matrix with
columns \eqref{basis-Ws3} and multiplying it
by the geodesic $\gamma_{\xi_1+\xi_2}$.
\end{remark}

\section{Lie theory proofs}\label{sec:Lie-theory}
\subsection{Canonical twistor fibrations} \label{subsec:can-twistor-fibr}
We summarize the theory of Burstall and Rawnsley \cite{burstall-rawnsley}, see also
\cite[\S 3]{burstall-guest}.
Let $G$ be a compact connected Lie group with Lie algebra $\g$; as in the above references we assume that $G$ has trivial centre, see \cite{correia-pacheco-can} for the case of non-trivial centre.  Write $\g^{\cc} = \g \otimes \cn$.\
Let $T$ be a maximal torus of $G$ and let $\t$ be its Lie algebra;
 then $\t^{\cc} = \t \otimes \cn \subset \g^{\cc}$
is the corresponding Cartan subalgebra.  Let $\{\alpha_1, \ldots, \alpha_{\l} \} \subset \ii\t$ be a choice of simple roots, and let
$\{\xi_1,\ldots,\xi_{\l}\} \subset \t$ be dual to the $\alpha_j$ in the sense that $\alpha_j(\xi_k) = \ii\delta_{jk}$.
A \emph{canonical element (for $G$)} is an element $\xi \in \t$ such that
$\alpha_j(\xi) = 0$ or $\ii$ for each simple root $\alpha_j$, equivalently, $\xi = \sum_{k \in I}\xi_k$
for some subset $I$ of $\{1,\ldots,\l\}$.  Note that $\alpha(\xi)\!/\ii$ is an integer for any root $\alpha$.

Let $\g_{\alpha} \subset \g^{\cc}$ denote the root space of $\alpha$.
Given a canonical element $\xi$, we obtain the following objects where the sums are over the roots $\alpha$
satisfying the given condition on the integer $\alpha(\xi)\!/\ii$:

(i) a \emph{parabolic subalgebra} $\q = \t^{\cc} + \sum_{\alpha(\xi)\!/\ii\,\geq 0} \g_{\alpha};$

(ii)  a \emph{(generalized) flag manifold} $F = G^{\cc}/Q$ where $Q$ is the parabolic subgroup with
 Lie algebra $\q$.  Set
$\h =\q \cap \g$ so that $\h^{\cc} = \t^{\cc} + \sum_{\alpha(\xi) = 0} \g_{\alpha}$; 
then $F = G/H$ where $H$ has Lie algebra $\h$ and is the centralizer of a torus (namely that with Lie algebra $\{X \in \t:\alpha_i(X) = 0 \ \forall\, i \notin I\}$).   The formulae in (i) and (ii) give all flag manifolds of $G;$

(iii) a \emph{complex structure $J_1$} on the flag manifold $F$ with $(1,0)$-space at the base point
(the identity coset $eH$) given by
$T^{1,0}_{J_1}F = \sum_{\alpha(\xi)\!/\ii\,> 0} \g_{\alpha};$

(iv) a \emph{symmetric decomposition} $\g = \k+\p$ where
$\k^{\cc} = \t^{\cc} + \sum_{\alpha(\xi)/\ii \text{ even}} \g_{\alpha},$ and
$\p^{\cc}= \sum_{\alpha(\xi)\!/\ii \text{ odd}} \g_{\alpha};$
choosing a subgroup $K$ of $G$ with Lie algebra $\k$ gives a \emph{symmetric space} $N = G/K$;

(v) a \emph{homogeneous fibration} $\pi: F = G/H \to N = G/K$ given by the inclusion of $H$ in $K$;
with $K$ connected, this is called the \emph{canonical fibration defined by $\xi$}.

(vi) an \emph{almost complex structure $J_2$} on $F$ obtained by reversing the orientation on the vertical space of this fibration; thus the horizontal and vertical $(1,0)$-spaces of $J_2$ at the base point are given by
\[
\Hh^{1,0}_{J_2}F = \Hh^{1,0}_{J_1}F = \sum_{\alpha(\xi)\!/\ii\,>0, \text{ odd}} \g_{\alpha}
\quad \text{and} \quad
\Vv^{1,0}_{J_2}F = \Vv^{0,1}_{J_1}F = \sum_{\alpha(\xi)\!/\ii\,<0, \text{ even}} \g_{\alpha}.
\]
Then Burstall and Rawnsley \cite[Corollary 5.10]{burstall-rawnsley} show that \emph{the homogeneous fibration $\pi:(F,J_2) \to N$ is a twistor fibration for harmonic maps}.

\subsection{Nilconformal maps} \label{subsec:nilconformal}
We adopt the following definition, which extends that given in Definition \ref{def:nilconformal}.
Let $\varphi:M \to G$ be a smooth map from a Riemann surface to a Lie group $G$.  As in \S \ref{subsec:Lie-groups}, on a local coordinate domain $(U,z)$, write
$A^{\varphi}_z = \frac{1}{2}\varphi^{-1}\varphi_z:U \to \g^{\cc}$.  Then we say that \emph{$\varphi$ is nilconformal if $A^{\varphi}_z$ is $\ad$-nilpotent},
in the sense that $\{\ad(A^{\varphi}_z)\}^k = 0$ for some $k \in \nn$.   Here we are thinking of $\ad(A^{\varphi}_z)$ as an endomorphism of the trivial vector bundle $U \times \g^{\cc}$;
the condition is clearly independent of choice of local coordinate and so makes sense for a map
$\varphi:M \to G$.
By a standard result on the Jordan--Chevalley decomposition, this is equivalent to
$\rho \circ A^{\varphi}_z$ being nilpotent for any finite-dimensional representation $\rho$ of $\g^{\cc}$.

We say that \emph{a map into a symmetric space $G/K$ is nilconformal if its composition
with the Cartan immersion of $G/K$ into $G$ is nilconformal}.
Then, with notation as before, we have
\begin{proposition} \label{pr:nilconformal}
Let $\pi:F=G/H \to N=G/K$ be a canonical fibration and $\psi:M \to F$ a $J_1$- or $J_2$-holomorphic map
from a Riemann surface. Then $\varphi = \pi \circ \psi$ is nilconformal.
\end{proposition}

\begin{proof}  As $\pi$ is a homogeneous fibration, it suffices to work at the base point.
Now, $\varphi_z$ is obtained by applying $\d\pi$ to the horizontal component of
$\psi_z$.  By part (vi) of \S \ref{subsec:can-twistor-fibr} this lies in
$\sum_{\alpha(\xi)\!/\ii\,>0} \g_{\alpha}$. This is nilpotent, so that $A^{\varphi}_z$ is $\ad$-nilpotent, as required.
\end{proof}

We remark that we only need to know that $\psi$ is `horizontally holomorphic', i.e., the horizontal part of $\psi_z$ intertwines the complex structure of $M$ with $J_1$ (equivalently $J_2$).

\subsection{Twistor fibrations for $G_2$} \label{subsec:twistor-fibr2}

We see how the theory of \S \ref{subsec:can-twistor-fibr} applies to $G_2$ and show Lemma
\ref{le:preserve-J2}, i.e. that the maps \eqref{inclusions} interwine both $J_1$ and $J_2$.
There are precisely three non-zero canonical elements $\xi$ for $G$; these define our three twistor spaces $T_s$ as follows.

(i) $\xi=\xi_2$.   Then the Lie subalgebra $\h$ defined in \S \ref{subsec:can-twistor-fibr}(ii) has complexification 
\[
\h^{\cc} = \t^{\cc}+\sum_{\alpha(\xi)=0}\g_{\alpha}=\t^{\cc}+\g_{\alpha_1}+\g_{-\alpha_1}\,.
\]
Thus $\h$ is a copy of $\u{2}$ which we denote by $\u{2}_+$; with $\U{2}_+$ the corresponding connected subgroup of $G_2$, the flag manifold that we get from the canonical element $\xi=\xi_2$ is exactly $T_1 = G_2/\U{2}_+$.
Now $\U{2}_+$ is the stabilizer in $G_2$ of the flag \eqref{std-flag1}($s$=1),
so that we have a $G_2$-equivariant isomorphism of fibre bundles  $T_1 \to \T_1$.
Regarding $T_1$ as the set of all rank $2$ complex-coassociative isotropic subspaces $W$ of $\cn^7$, this is given geometrically by \eqref{inclusions}($s$=1).

We now show that the map \eqref{inclusions}($s$=1) intertwines both $J_1$ and $J_2$.  By $G_2$-equivariance, it suffices to work at the base point $\psi$ of $\T_1$ given by the flag \eqref{std-flag1}($s$=1);
note that this corresponds to $W = \l_{\alpha_1+\alpha_2} \oplus \l_{2\alpha_1+\alpha_2}$\,.
{}From the theory in the last subsection, the $(1,0)$-tangent space of $T_1$ for the complex structure $J_1$
is given at that base point by 
\[
T^{1,0}_{J_1}T_1=\sum_{\alpha(\xi)\!/\ii\,>0}\g_{\alpha}=\g_{\alpha_2}+\g_{\alpha_1+\alpha_2}+\g_{2\alpha_1+\alpha_2}+\g_{3\alpha_1+\alpha_2}+\g_{3\alpha_1+2\alpha_2}.
\]
Now, for any geometric flag manifold $F = \U{n}\big/\U{d_0}\times\dots\times\U{d_t}$ we have $T^{1,0}_{J_1}F = \sum_{i < j}\Hom(\psi_i,\psi_j)$ (see \S\ref{subsec:twistor-Grass}).  Further, 
at the base point, the $\psi_i$ are the legs of the flag \eqref{std-flag1}($s=1$).
It is a standard fact from representation theory that the action of $\g$ on $\cn^7$ satisfies
$\g_{\alpha}(\l_{\beta}) \subset \l_{\alpha+\beta}$ for any roots $\alpha$ and $\beta$; it follows that a
positive root defines an element in $\sum_{i < j}\Hom(\psi_i, \psi_j)$.
This shows that the inclusion of $T_1$ in $F_1$ maps $T^{1,0}_{J_1}T_1$ into
 $T^{1,0}_{J_1}F_1$ and so intertwines the $J_1$'s, i.e., is $J_1$-holomorphic.

Now, the $(1,0)$-part of the horizontal and vertical spaces for $T_1$ with respect to $J_1$
are given at the base point by 
\[
\Hh^{1,0}_{J_1} T_1 = \sum_{\alpha(\xi)\!/\ii\,>0, \text{ odd}}\g_{\alpha} =\g_{\alpha_2} + \g_{\alpha_1+\alpha_2} + \g_{2\alpha_1+\alpha_2} + \g_{3\alpha_1+\alpha_2}
	\quad \text{and} \quad
\Vv^{1,0}_{J_1} T_1 = \g_{3\alpha_1+2\alpha_2}. 
\]
Comparing with \eqref{holo-tgt-space}, we see that these lie in $\Hh^{1,0}_{J_1} F_1$ and $\Vv^{1,0}_{J_1} F_1$,
respectively.  Hence the inclusion also intertwines $J_2$. 

(ii) $\xi=\xi_1$. Then the Lie subalgebra $\h$ defined in \S \ref{subsec:can-twistor-fibr}(ii) has complexification  
\be{h2}
\h^{\cc} = \t^{\cc}+\sum_{\alpha(\xi)=0}\g_{\alpha}=\t^{\cc}+\g_{\alpha_2}+\g_{-\alpha_2}\,.
\ee
Again, $\h$ is a copy of $\u{2}$ which we denote by $\u{2}_-$.
The corresponding connected subgroup $\U{2}_-$ is the stabilizer in $G_2$ of the flag
\eqref{std-flag2}($s$=2),
so that the flag manifold that we get from the canonical element $\xi=\xi_2$ is exactly
$T_2 = G_2/\U{2}_-$.
This flag does not project to the same element $\varphi \in G_2/\SO{4}$ as the flag \eqref{std-flag1}($s$=1).
To achieve that,
we must apply the element in the Weyl group given by reflection in the plane orthogonal to 
$3\alpha_1+\alpha_2$.  This replaces $\alpha_2$ by $3\alpha_1 + 2\alpha_2$ in \eqref{h2}, which gives a $\U{2}_-$ which is the stabilizer of the flag \eqref{std-flag1}($s$=2).
%(in contrast, note that there is no element of the Weyl group which transforms $\U{2}_-$ into the $\U{2}_+$ of case (i), as they are the stabilizers of inequivalent flags).

We do not apply any element of the Weyl group as is most convenient to use the $\u{2}_-$ given by \eqref{h2}.  Then we have a $G_2$-equivariant isomorphism of fibre bundles  $T_2 \to \T_2$.
Regarding $T_2$ as the set $Q^5$ of all rank $1$ isotropic subspaces $\l$ of $\cn^7$, this is given geometrically by \eqref{inclusions}($s$=2).
Note that $\psi \in \T_2$ also corresponds to $W = \psi_1$, a rank $2$ isotropic subspace of $\cn^7$
which is not complex-coassociative, justifying our alternative description of $T_2$ as the set of all such.

That the map \eqref{inclusions}($s$=2) intertwines both $J_1$ and $J_2$ is proved
as in case (i); this time, 
\[
\Hh^{1,0}_{J_1}T_2 =\sum_{\alpha(\xi)\!/\ii\,>0,\text{ odd}}\g_{\alpha} =\g_{\alpha_1}+\g_{\alpha_1+\alpha_2}+\g_{3\alpha_1+\alpha_2}+\g_{3\alpha_1+2\alpha_2}
\quad \text{and} \quad
\Vv^{1,0}_{J_1}T_2 = \g_{2\alpha_1+\alpha_2}.
\]

(iii)  $\xi = \xi_1+\xi_2$.  In this case, $\h = \t = \u{1} \oplus \u{1}$; hence, the flag manifold we obtain is 
$G_2/T = G_2/(\U{1} \times \U{1}) = T_3$.  Now $T$ is the stabilizer in $G_2$ of the flags
\eqref{std-flag1}($s=3$) and \eqref{std-flag2}($s=3$); as in case (ii), for convenience we work with the latter.
Then
\[
\Hh^{1,0}_{J_1}T_3 = \g_{\alpha_1}+\g_{2\alpha_1+\alpha_2}+\g_{3\alpha_1+2\alpha_2} \quad \text{and}
\quad \Vv^{1,0}_{J_1}T_3 = \g_{\alpha_1+\alpha_2} + \g_{3\alpha_1+\alpha_2}\,,
\]
and the proof that the map \eqref{inclusions}($s$=3) intertwines both $J_1$ and $J_2$
proceeds as before.

\subsection{Proof of Lemma \ref{le:J2-descr}}\label{subsec:J2-descr}

We need the following facts.
As in \S \ref{subsec:Grass}, we identify smooth maps $\varphi:M \to G_k(\cn^n)$ and
rank $k$ subbundles of $\CC^n$.  As described in that section, we give $\CC^n$ the connection
$D^{\varphi}$ and the Koszul--Malgrange structure with $\bar{\pa}$-operator given over each
coordinate domain $(U,z)$ by $D^{\varphi}_{\bar z}$. 
{}From \S \ref{subsec:Grass}, the connection $D^{\varphi}$ is the direct sum of the connections $\nabla_{\varphi}$ on $\varphi$
and $\nabla_{\varphi^{\perp}}$ on $\varphi^{\perp}$;
those connections are given by projection of the flat connection on $\CC^n$.

We now prove Lemma \ref{le:J2-descr} for $s=1$.  The cases $s=2$ and $s=3$ are similar.

Suppose that $\psi:M \to T_1$ is $J_2$-holomorphic. We need to show:
 \be{s=1}
W \text{ is a holomorphic subbundle of } (\CC^n, D^{\varphi}_{\zbar}) \text{ which lies in }
\ker A^{\varphi}_z.
\ee
By $J_2$-holomorphicity, the horizontal component of $\psi_z$ has values in\\ 
$\Hh^{1,0}_{J_2} T_1 = \Hh^{1,0}_{J_1} T_1 = \sum_{\alpha(\xi)\!/\ii\,>0, \text{ odd}}\g_{\alpha} =\g_{\alpha_2} + \g_{\alpha_1+\alpha_2} + \g_{2\alpha_1+\alpha_2} + \g_{3\alpha_1+\alpha_2}$, and the vertical 
component has values in $\Vv^{1,0}_{J_2} T_1 = \Vv^{0,1}_{J_1} T_1 = \g_{-3\alpha_1-2\alpha_2}$\,.

Hence the horizontal component of $\psi_z$ maps
$W = \psi_1 = \l_{\alpha_1+\alpha_2} \oplus \l_{2\alpha_1+\alpha_2}$ to $0$,
and the vertical component has zero component in $\End(\psi_i,\psi_j)$ for $j >i$\,
so that $A'_{\varphi^{\perp} \ominus W,W} = 0$.
As in \cite{burstall-wood}, this means $W$ is a holomomorphic subbundle of $(\CC^n, D^{\varphi}_{\zbar})$
and \eqref{s=1} follows.

Conversely, suppose that $\psi:M \to T_1$ is \emph{not} $J_2$-holomorphic.   Then
(i) \emph{either} the horizontal component of
$\psi_z$ has a non-zero component in one of the components of 
$\Hh^{0,1}_{J_2}T_1 = \g_{-\alpha_2} + \g_{-\alpha_1-\alpha_2} + \g_{-2\alpha_1-\alpha_2} + \g_{-3\alpha_1-\alpha_2}$
\emph{or} (ii) the vertical component of $\psi_z$ has a non-zero component in $\Vv^{0,1}_{J_2}T_1 = \g_{3\alpha_1+2\alpha_2}$.
In case (i), $A^{\varphi}_z(W)$ is non-zero: for example, if $\psi_z$ has
a non-zero component in $\g_{-\alpha_2}$, then
$A^{\varphi}_z(W) \supset A^{\varphi}_z(\l_{\alpha_1 + \alpha_2}) = \l_{\alpha_1} \neq 0$.
Similarly, in case (ii), $A'_{\ov{W}, W} = A'_{\psi_{-1},\psi_1}$ is non-zero, so that $W$ is not a holomorphic subbundle of
$(\CC^n, D^{\varphi}_{\zbar})$.  Hence one of the conditions  in \eqref{s=1} is violated.

\end{document}